\theoremstyle{plain}
\newtheorem{theoremintro}{Theorem}
\newtheorem{corintro}[theoremintro]{Corollary}
\newtheorem{theorem}{Theorem}[section]
\newtheorem{lemma}[theorem]{Lemma}
\newtheorem{proposition}[theorem]{Proposition}
\newtheorem{corollary}[theorem]{Corollary}
\newtheorem{remark}[theorem]{Remark}
\theoremstyle{definition}
\newtheorem{definition}[theorem]{Definition}
\newtheorem{example}[theorem]{Example}
\newcommand{\Id}{\mathrm{Id}}
\renewcommand{\epsilon}{\ensuremath{\varepsilon}}
\newcommand{\CC}{\ensuremath{\mathbb{C}}} 
\newcommand{\kk}{\ensuremath{K}} 
\newcommand{\KK}{\ensuremath{\mathrm{K}}}
\newcommand{\QQ}{\ensuremath{\mathbb{Q}}}  
\newcommand{\NN}{\ensuremath{\mathbb{N}}}  
\newcommand{\RR}{\ensuremath{\mathbb{R}}} 
\newcommand{\ZZ}{\ensuremath{\mathbb{Z}}} 
\newcommand{\vspan}[1]{\langle #1 \rangle}
\newcommand{\cl}[1]{\mathcal{#1}}
\newcommand{\vb}[1]{\mathbf{#1}}
\newcommand{\bm}[1]{\mathbf{#1}}
\newcommand{\bms}[1]{\boldsymbol{#1}}
\newcommand{\hh}{{\rm h}}
\def\h{\mathfrak{h}}
\def\rng{\RR[\bm x]}
\def\kkrng{\kk[\bm x]}
\def\QQrng{\QQ[\bm x]}
\def\ZZrng{\ZZ[\bm x]}
\def\CCrng{\CC[\bm x]}
\def\mod{\,\textup{mod}\,}
\def\dist{\,\textup{dist}\,}
\def\ib{\bm i\,}
\def\diag{\textup{diag}}
\def\Chow{\textup{Ch}}
\newcommand{\cst}{{\cl C}}
\def\Cn{{\cl C}(n)\,}
\def\C1{{\cl C}(1)\,}
\def\Cnd{{\cl C}(n; d)\,}
\newcommand{\wtau}{\tau_p}
\newcommand{\otau}{\eta_p}
\newcommand{\wf}{p}
\newif\if@borderstar
\def\bordermatrix{\@ifnextchar*{%
  \@borderstartrue\@bordermatrix@i}{\@borderstarfalse\@bordermatrix@i*}%
}
\def\@bordermatrix@i*{\@ifnextchar[{%
  \@bordermatrix@ii}{\@bordermatrix@ii[()]}
}
\def\@bordermatrix@ii[#1]#2{%
  \begingroup
    \m@th\@tempdima8.75\p@\setbox\z@\vbox{%
      \def\cr{\crcr\noalign{\kern 2\p@\global\let\cr\endline }}%
      \ialign {$##$\hfil\kern 2\p@\kern\@tempdima & \thinspace %
      \hfil $##$\hfil && \quad\hfil $##$\hfil\crcr\omit\strut %
      \hfil\crcr\noalign{\kern -\baselineskip}#2\crcr\omit %
      \strut\cr}}%
    \setbox\tw@\vbox{\unvcopy\z@\global\setbox\@ne\lastbox}%
    \setbox\tw@\hbox{\unhbox\@ne\unskip\global\setbox\@ne\lastbox}%
    \setbox\tw@\hbox{%
      $\kern\wd\@ne\kern -\@tempdima\left\@firstoftwo#1%
        \if@borderstar\kern2pt\else\kern -\wd\@ne\fi%
      \global\setbox\@ne\vbox{\box\@ne\if@borderstar\else\kern 2\p@\fi}%
      \vcenter{\if@borderstar\else\kern -\ht\@ne\fi%
        \unvbox\z@\kern-\if@borderstar2\fi\baselineskip}%
        \if@borderstar\kern-2\@tempdima\kern2\p@\else\,\fi\right\@secondoftwo#1 $%
    }\null \;\vbox{\kern\ht\@ne\box\tw@}%
  \endgroup
}
\title{An Effective Positivstellensatz over the Rational Numbers \\ for Finite Semialgebraic Sets}
\author{ Lorenzo Baldi$^{1}$ \& Teresa Krick$^2$ \& Bernard Mourrain$^3$ }
\date{%
    {\small
    $^1$ Universit\"at Leipzig \& Max Planck Institute for Mathematics in the Sciences, Leipzig, Germany\\%
    $^2$ Departamento de Matemática \& IMAS-Conicet, Universidad de Buenos Aires, Argentina\\%
    $^3$ Centre Inria d’Universit\'e Côte d’Azur, Sophia Antipolis, France\\[2ex]%
    }
    \today}
\begin{document}

\maketitle
\vspace{-0.7cm}
\begin{center}
    \itshape{To the memory of our dear friend Agnes Szanto}
\end{center}

\begin{abstract}

We study the problem of representing multivariate polynomials with rational coefficients, which are nonnegative and strictly positive on finite semialgebraic sets, using rational sums of squares.
We focus on the case of finite semialgebraic sets $S$ defined by equality constraints, generating a zero-dimensional ideal $I$, and by nonnegative sign constraints.

First, we obtain existential results. We prove that a strictly positive polynomial $f$ with coefficients in a subfield $\kk$ of $\RR$ has a representation in terms of  weighted Sums-of-Squares with coefficients in this field, even if the ideal $I$ is not radical. We generalize this result to the case where $f$ is nonnegative on $S$ and $(f) + (I:f)=1$. 
We deduce that nonnegative polynomials with  coefficients in $\kk$ can be represented in terms of Sum-of-Squares of polynomials with coefficients in $\kk$, when the ideal is radical.

Second, we obtain degree bounds for such Sums-of-Squares representations, which depend linearly on the regularity of the ideal and the degree of the defining equations, when they form a graded basis.

Finally, we analyze the bit complexity of the Sums-of-Squares representations for polynomials with  coefficients in $\QQ$, in the case the ideal is radical.
The bitsize bounds are linear in the bitsize of the input polynomials, quadratic or cubic in the Bezout bound, and linear in the regularity, generalizing and improving previous results obtained for special zero dimensional ideals.

As an application in the context of polynomial optimization, we retrieve and improve results  on the finite convergence and exactness of the moment/Sums-of-Squares hierarchy.
\end{abstract} 

\tableofcontents
\section{Introdution}\label{sec:intro}
The certification of the nonnegativity of a (multivariate) polynomial with real coefficients is a central problem in real algebra. Many different theorems, known as {\em Positivstellens\"atze}, provide existential results on the representation of nonnegative polynomials in terms of Sums of Squares (SoS).  While not all globally nonnegative polynomial are sums of squares of polynomials, Artin \cite{artinUberZerlegungDefiniter1927} answered positively to Hilbert's 17th problem by showing that a representation as sums of squares of rational functions is always possible. This result was generalized by the Positivstellensätze of Krivine \cite{krivineAnneauxPreordonnes1964} and Stengle \cite{stengleNullstellensatzPositivstellensatzSemialgebraic1974} to polynomials that are nonnegative on arbitrary semialgebraic sets. Later on, the breakthrough results of Schm\"udgen \cite{schmudgenTheKmomentProblemCompact1991} and Putinar \cite{putinarPositivePolynomialsCompact1993} showed that general denominator-free SoS representations exist for strictly positive polynomials on compact basic semialgebraic sets. 

SoS certificates for nonnegative polynomials have many applications: for instance, 
they can be used to develop constructive mathematics, see e.g. \cite{LombardiQuitte2015}, or in theorem checker as positivity witnesses \cite{Harrison2007, Magron2014, DBLP:conf/cpp/Martin-DorelR17}. 

Another area of application, where SoS representations play an important role, is Polynomial Optimization (POP). Indeed, after the discovery of the connection between SoS and positive semidefinite matrices, due to Shor \cite{shorClassGlobalMinimum1987} and Choi, Lam and Reznick \cite{choiSumsSquaresReal1995}, SoS representations acquired an important role in optimization and numerical computation. This connection was exploited by Lasserre \cite{lasserreGlobalOptimizationPolynomials2001a} and Parrilo \cite{parriloStructuredSemidefinitePrograms2000} to construct the celebrated moment/SoS hierarchies, which provide arbitrarily tight semidefinite relaxations to polynomial optimization problems.
The spectrum of applications of polynomial optimization itself is huge (see e.g. \cite{Lasserre2009} and references therein).

In all these applications, controlling the degrees and sizes of the SoS representation is a bottleneck for the performance of the related algorithms: this is an active and challenging area of research, which is witnessing recent interesting developments. General degree bounds for Positivstellens\"atze are challenging to obtain and not proved to be tight: for instance the best known  bound for the Krivine-Stengle Positivstellensatz, presented in \cite{Lombardi2020}, consists of five towers of exponentials in the degree of the input polynomials and the number of variables, and similar bounds apply for Putinar's Positivstellensatz (see e.g. \cite{Baldi2025, baldiDegreeBoundsPutinar2024} and references therein). 

It is therefore natural to analyze simpler special cases, still useful for applications, where we can expect much precise degree estimates. For instance, Lasserre studied in \cite{lasserrePolynomialsNonnegativeGrid2002}  the case of  strictly positive polynomials over a finite grid of real points, and then Parrilo considered in \cite{Parrilo2002} the more general case of nonnegative polynomials over a finite basic semialgebraic set, assuming a radicality assumption in the description of the set.

It is also natural to ask in which field we can take the coefficients of the SoS representation, if the positive polynomial has coefficients in a subfield $\kk$ of $\RR$, in particular when $K = \QQ$. 
Although the existence of rational SoS representations is known for globally nonnegative, univariate polynomials with rational coefficients \cite{landauUberDarstellungDefiniter1906,pourchet} and for strictly positive polynomials on compact basic semialgebraic sets in the multivariate case (mostly under some additional assumptions \cite{Schweighofer2002, Powers2011, Magron2021, Davis2022}), this is not always true for globally nonnegative multivariate polynomials: Scheiderer \cite{scheidererSumsSquaresPolynomials2016}  showed that in the multivariate case there exist rational polynomials that are nonnegative on $\RR^n$ and admit a real SoS representation, but not a rational one.

\paragraph{The zero-dimensional setting.}
In this paper, we will consider {finite  semialgebraic sets}, focusing in particular on  the existence of {exact effective}  SoS representations. 
Instead of working with the specific field $\QQ$ of rational numbers, we study polynomials with coefficients in a subfield $K$ of $\RR$, that are strictly positive (or nonnegative) on a finite basic closed semialgebraic set $S\subset \RR^{n}$ defined by polynomials with coefficients in $K$. 
We aim at analyzing the smallest field in which we can compute {an SoS} representation and at bounding the degree and the height (when $K = \QQ$) of the coefficients of this SoS representation. 
{In the case $K  = \RR$, these representations are well-studied, see \cite{Parrilo2002} and \Cref{sec:applications}. The main technical difficulty in generalizing these results to subfields of $\RR$ is to ensure a certain \emph{strict feasibility} condition for the matrices associated with the SoS representation: we refer to \Cref{subs:radical} for more details.}

The precise problems we consider here are the following.
Given a field $K\subset \RR$, let \begin{equation}\label{eq:gh} \bm g = \{ \, g_1, \dots g_r \, \}, \ \bm h = \{ \, h_1, \dots h_s \, \} \  \subset \ \kkrng := \kk[x_1, \dots x_n],\end{equation} be two  sets of polynomials in $n$ variables with coefficients in $\kk$, where the ideal $I=(\bm h)\subset \kkrng$ generated by $\bm h$ is zero-dimensional, {i.e. with finitely many complex solutions}. The polynomials $\bm g$ and $\bm h$ then define the {\em finite} (basic, closed) semialgebraic set  
\begin{equation} \label{eq:s}
S = S(\vb g,\vb h) = \{ \, \xi \in \RR^n
\colon g_i(\xi) \ge 0   \textup{ for } 1\le i\le r   \textup{ and }  h_j(\xi)=0 \textup{ for } 1\le j\le s\, \} \ \subset \RR^n.
\end{equation}
Given a polynomial $f\in \kkrng$, we study the following problems.

\begin{enumerate} 
    \item If $f(\xi) > 0$ for all $ \xi\in S$, how can we represent $f$ as
       \begin{equation*}
        f = \sum_{k} \omega_{0,k}\, q_{0,k}^2 + \sum_{i} \Big( \sum_{k} \omega_{i,k}\, q_{i,k}^2 \Big) \, g_i + \sum_{j} p_j \, h_j    
     \end{equation*}
    with $\omega_{i,k}\in \kk_{\ge 0} {:= \{ \, \omega \in \kk \colon \omega \ge 0 \, \}}$ and $q_{i,k}, p_{j}\in \kkrng$, 
   and  what are the degrees of $q_{i,k}, p_{j}$?

\item If $f(\xi) \geq 0$ for all $ \xi\in S$, which  condition  guarantees that $f$ can be represented as an SoS as above, and  what are the degrees of $q_{i,k}$ and $p_{j}$? 

\item  When $K= \QQ$, what is the bitsize of the $\omega_{i,k}$ and the  coefficients of $q_{i,k}, p_{j}$?  What is the cost to compute such a representation?
\end{enumerate}

Before stating our main results, we recall that an ideal $I\subset \kkrng$ generated by $\bm h=\{h_1,\dots,h_s\}$ is a zero-dimensional ideal  when 
 $$V_{\CC}:=\{\zeta \in \CC^n\ : \ h_j(\zeta)=0  \mbox{ for }  1\le j\le s\}$$ is a finite set, 
or equivalently, when the quotient ring $\kkrng/I$ is a finite dimensional $\kk$-vector space.  In the sequel we will denote by $B$ a  monomial basis of $\kkrng/I$, and by $\langle B\rangle_\kk$, the vector space spanned by $B$ in $\kkrng$. 
We also recall that 
$\bm h$ is a \emph{graded basis} of the ideal $I$ when for all $p\in I$, there exist $p_j\in \kkrng$ with $\deg(p_j)\le \deg(p)-\deg(h_j)$, $1\le j\le s$, such that 
$ 
p= \displaystyle \sum_{j=1}^{s}  p_j \, h_{j}
$. {Finally, for $f \in \kkrng$ we denote $(I \colon f) = \{ \, h \in \kkrng \colon fh \in I \,\}$ the colon or quotient ideal.}\\

We can now state our main theorems. 
\begin{theoremintro}[{See Theorems \ref{thm:epss:finite} and \ref{thm:nonneg}}]
    \label{thm:A}
    Let $\kk\subset \RR$ be a field, 
   $\bm g,  \bm h  \subset  \kkrng $ be as in \eqref{eq:gh}, $I = (\vb h) \subset \kkrng$  be a {\em zero-dimensional} ideal, $B$  be a monomial basis of $\kkrng \big / I$ with $|B|=D$, and
 $S\subset \RR^n$ be the finite basic closed semialgebraic set defined in \eqref{eq:s}.
 
If $f\in \kkrng$ is such that $f>0$ on $S$, or  $f \ge 0$ on $S$ and $(I \colon f) + (f) = (1)$, then
 there exist 
\begin{itemize}
    \item $\omega_{i,k}\in \kk_{\ge 0}$ and $q_{i,k} \in \vspan{B}_K$ for $0\le i\le r$ and $1\le k\le D$,
    \item $p_j \in \kkrng$ for $1\le j\le s$ 
\end{itemize}
such that 
     \begin{equation}\label{eq:main_result}
        f = \sum_{k=1}^D \omega_{0,k}q_{0,k}^2 + \sum_{i=1}^r \left( \sum_{k=1}^D \omega_{i,k}q_{i,k}^2 \right) \, g_i + \sum_{j=1}^s p_j \, h_j    
     \end{equation}
Furthermore, if $\vb h$ is a graded basis of $I$ and $\deg(B)$ is an upper bound for the maximum degree of a monomial in $B$, then for all $j$
$$\displaystyle \deg (p_j h_j)  \le \max\{ \deg(f), \deg(g_i)+2\deg(B) \,\colon \,1\le i\le r\}.$$
\end{theoremintro}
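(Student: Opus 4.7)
My plan is to recast the identity \eqref{eq:main_result} as a semidefinite feasibility problem in the finite-dimensional $\kk$-vector space $\langle B\rangle_K$, solve it over $\RR$ using the real Positivstellensatz for finite semialgebraic sets of Parrilo and Lasserre, and then rationalize via a strict-feasibility argument combined with the density of $\kk$ in $\RR$.

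First, I would exploit the graded basis hypothesis on $\vb h$ to reduce modulo $I$. Every $p\in\kkrng$ has a normal form $\bar p \in \langle B\rangle_K$ with $p - \bar p = \sum_j p_j h_j$ and $\deg(p_j h_j)\le \deg(p)$. Applying this to $f$ and to each product $q_{i,k}^2 g_i$ of degree at most $2\deg(B)+\deg(g_i)$ automatically produces the $p_j h_j$ summands with the stated degree bound. It therefore suffices to find, inside $\langle B\rangle_K$, weighted SoS $\sigma_i = \sum_k \omega_{i,k} q_{i,k}^2$ with $\omega_{i,k}\in\kk_{\ge 0}$ and $q_{i,k}\in\langle B\rangle_K$ such that $\bar f \equiv \sigma_0 + \sum_{i=1}^r \sigma_i g_i \pmod{I}$. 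Encoding $\sigma_i = b^{T} G_i b$ via Gram matrices $G_i\in S^D_+$, where $b$ is the column vector of monomials in $B$, this becomes the search for a $\kk$-rational tuple $(G_0,\dots,G_r)$ inside a spectrahedron cut from an affine $\kk$-subspace (defined by the identity modulo $I$) by the PSD cone.

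When $f>0$ on $S$, a feasible real solution is provided by Parrilo's and Lasserre's Positivstellens\"atze \cite{Parrilo2002, lasserrePolynomialsNonnegativeGrid2002}. The crucial step is to upgrade this to \emph{strict feasibility} over $\RR$, namely the existence of a real tuple with every $G_i\succ 0$; once strict feasibility is granted, the density of $\kk\supseteq\QQ$ in $\RR$ furnishes a nearby $\kk$-rational solution with every $G_i$ still strictly positive definite, whose $LDL^{T}$ decomposition (which requires no field extension in the strictly positive definite case) yields the required $\omega_{i,k}\in\kk_{>0}$ and $q_{i,k}\in\langle B\rangle_K$. I would attack strict feasibility by perturbing a real feasible solution along \emph{null directions}, i.e.\ tuples $(q_0,\dots,q_r)\in\langle B\rangle_\RR^{r+1}$ with $q_0^2 + \sum_i q_i^2 g_i \in I$; the existence of enough such null directions to push every $G_i$ to full rank is guaranteed by the zero-dimensionality of $I$ together with the structure of the Hermite/trace quadratic form on the Artinian algebra $\kkrng/I$, while the strict positivity of $f$ on $S$ provides the wiggle room needed to preserve the identity throughout the perturbation.

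The main obstacle is precisely this strict-feasibility step, as the authors themselves flag in the discussion before the theorem. It is particularly delicate when $I$ is not radical, because the clean ``one idempotent per real point'' description of $\kkrng/I$ breaks down and one must instead work with the local Artinian components of the complexified algebra, where the Hermite form can be degenerate. For the nonnegative case under the coprimality hypothesis $(I:f)+(f)=(1)$, I would exploit the induced Chinese-Remainder-type decomposition $\kkrng/I \cong A_{f\ne 0}\times A_{f=0}$: on the first factor $f$ is a unit and one runs the strictly positive construction, while on the second factor $f$ vanishes so the identity is trivial; splicing the two certificates through the idempotents of this decomposition then yields the final $\kk$-rational representation.
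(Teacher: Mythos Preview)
Your high-level plan for the radical, strictly-positive case is close in spirit to the paper's: obtain a real SoS representation, establish that the Gram matrix can be taken positive definite, and rationalize via density of $\kk$ in $\RR$ followed by a square-root-free Cholesky factorization. The paper, however, makes this concrete rather than abstract. It writes the real SoS explicitly through the idempotents $u_\zeta$ of $\CC[\bm x]/I$ (Lemmas~\ref{lem:sos:nempty} and~\ref{lem:sos:empty}), so that the Gram matrix $\widetilde Q=\Theta\Theta^t$ is automatically positive definite because $\{\theta_\zeta\}$ is a basis; it then approximates $\widetilde Q$ by a rational matrix, orthogonally projects onto the affine space $\{Y:f\equiv BYB^t \bmod I\}$, and Cholesky-factors (proof of Theorem~\ref{thm:radical}). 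Note too that the paper never needs \emph{all} $G_i\succ 0$: the $g_i$-weights are produced \emph{first}, by a separate perturbation (Lemma~\ref{lem:perturb}) that subtracts $\varphi=\sum_{\xi\in V_\RR\setminus S}\rho_\xi u_\xi^2 g_{i_\xi}$ from $f$ to make $\widetilde f=f-\varphi$ strictly positive on all of $V_\RR$, and the $\rho_\xi,u_\xi$ are rationalized directly by density. Your demand that every $G_i$ be strictly positive definite is stronger than required, and the ``null-direction'' perturbation you invoke to achieve it is not substantiated.

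The genuine gap is the non-radical case, which you correctly identify as the main obstacle but do not resolve. Your sentence about ``local Artinian components where the Hermite form can be degenerate'' is a diagnosis, not a cure; nothing in the proposal explains how to manufacture a positive-definite Gram matrix over $\RR$ when $I\neq\sqrt I$. The paper's mechanism is a Hensel lift (Proposition~\ref{prop:sqrtNewton}): one first solves the problem modulo $\sqrt I$ by the radical case, taking care to arrange that one of the summands $\theta_{\zeta_0}$ is a \emph{unit} in $\kk[\bm x]/\sqrt I$ (this is precisely the extra clause ``$\theta_{\zeta_0}(\zeta)\neq 0$ for all $\zeta\in V_\CC$'' in Theorem~\ref{thm:radical}, and the reason the idempotent constructions in Lemmas~\ref{lem:sos:nempty}--\ref{lem:sos:empty} are more elaborate than a bare $\sum f(\xi)u_\xi^2$). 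Newton iteration on $x^2-\theta=0$ then lifts that square from $\sqrt I$ to $(\sqrt I)^{2^k}\subset I$ (Theorem~\ref{th:nonradical}). Your proposal neither produces such a unit nor performs any lift, so as written it does not cover general zero-dimensional $I$.

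Your CRT picture for the nonnegative case is essentially what the paper does, phrased differently. From $(I:f)+(f)=(1)$ one gets $b\in(I:f)$ and $a$ with $af+b\equiv 1\bmod I$; since $bf\in I$, the element $b$ is idempotent modulo $I$ and one has $f\equiv af^2\bmod I$. After shifting $a\mapsto \bar a=a+\rho b$ with $\rho$ large enough that $\bar a>0$ on $S$, the strictly-positive case applied to $\bar a$ gives $\bar a\equiv \sigma_0+\sum_i\sigma_i g_i\bmod I$, and multiplying by $f^2$ yields the representation for $f$ (Theorem~\ref{thm:nonneg}). This is your ``splice through the idempotents'' made explicit.
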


Note that the smallest possible degree of a basis $B$ of $\kkrng/I$ is  the \emph{Castelnuovo-Mumford regularity} $\textup{reg}(\kkrng/I) $ of $\kkrng \big / I$ (see e.g. \cite{BayerStillman1987}), which  can always be upper bounded by  $D =\dim (\kkrng/I)$,  by choosing a suitable monomial basis $B$ of  $\kkrng/I$, which itself satisfies $D\le\displaystyle{\max_j}\{\deg(h_j)\}^n$ by B\'ezout 
bound. Moreover, when $\bm h$ is a graded basis of degree $\le d$ generating the zero-dimensional ideal $I$, we have $ \textup{reg}(\kkrng/I) \le n (d-1)$ (see \cite[Theorem 9.4]{Chardin2007} and \cite[Appendix, Prop.~A.9 and  Lem.~A.10]{Szanto2008}). 

Since the condition $(I \colon f) + (f) = (1)$ is always satisfied when the ideal $I$ is radical,  we can deduce  a complete characterization of nonnegativity of polynomials on $S$ defined by a radical zero-dimensional ideal $I$ (see \Cref{rem:iff_nonnegative_radical}).

\begin{corintro}
    \label{thm:B}
    Let $\kk$, $I$ and $S$ be as in \Cref{thm:A}, and assume that $I$ is a {\em radical} ideal. Then for $f \in \kkrng$,  $f \ge 0$ on $S$ if and only if $f$ admits {an SoS} representation as in \eqref{eq:main_result}.
\end{corintro}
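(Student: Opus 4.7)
The plan splits along the two directions of the biconditional. The backward direction ($\Leftarrow$) is immediate: for any $\xi\in S$, every $h_j(\xi)$ vanishes while the remaining contributions $\omega_{i,k}\,q_{i,k}^2(\xi)$ and $g_i(\xi)$ are nonnegative, so $f(\xi)\geq 0$.

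For the forward direction ($\Rightarrow$), the plan is to reduce to \Cref{thm:A}: if we can verify the algebraic condition $(I\colon f)+(f)=(1)$, then \Cref{thm:A} applies directly, since the other hypothesis $f\geq 0$ on $S$ is given. This condition should follow from the zero-dimensional radicality of $I$ by a short Nullstellensatz argument. Since $I$ is radical, $g\in(I\colon f)$ is equivalent to $gf$ vanishing on $V_{\CC}(I)$; combined with $(I\colon f)\supseteq I$ this gives
\begin{equation*}
V_{\CC}(I\colon f)\;=\;V_{\CC}(I)\setminus V_{\CC}(f).
\end{equation*}
The nontrivial inclusion relies on the Chinese Remainder decomposition $\kkrng/I\cong\prod_{\alpha}\kkrng/\mathfrak{p}_{\alpha}$, in which each prime $\mathfrak{p}_{\alpha}$ corresponds to a Galois orbit in $V_{\CC}(I)$; this supplies, for any $\zeta\in V_{\CC}(I)$ with $f(\zeta)=0$, a $\kk$-rational polynomial $g$ that is $\equiv 1$ on the orbit of $\zeta$ and $\equiv 0$ on its complement, so that $gf$ vanishes on all of $V_{\CC}(I)$ (hence $g\in(I\colon f)$) while $g(\zeta)\neq 0$. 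With the displayed equality in hand, intersecting with $V_{\CC}(f)$ yields $V_{\CC}((I\colon f)+(f))=\emptyset$, and the Nullstellensatz (applied after extending scalars to $\CCrng$, which contracts back to $\kkrng$ correctly) then forces $(I\colon f)+(f)=(1)$.

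I would also mention a cleaner structural alternative: $A:=\kkrng/I$ is an Artinian reduced $\kk$-algebra, hence a finite product of field extensions $A_1\times\cdots\times A_m$ of $\kk$. In this product the image $\bar f$ of $f$ has each coordinate either zero or a unit, so the principal ideal $(\bar f)$ and the annihilator $\mathrm{Ann}_A(\bar f)$ are the complementary products of the respective factors, and they sum to $A$, lifting to $1\in(f)+(I\colon f)$ in $\kkrng$. The only substantive step in either approach is the passage from radicality to the condition $(I\colon f)+(f)=(1)$; I do not expect further technical obstacles, as the rest of the corollary is a direct invocation of \Cref{thm:A}.
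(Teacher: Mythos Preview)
Your proof is correct and follows essentially the same approach as the paper. The paper's own argument (given as \Cref{rem:iff_nonnegative_radical}) is a one-line remark: when $I$ is radical one has $V_{\CC}(I\colon f)\cap V_{\CC}(f)=\emptyset$, hence $(I\colon f)+(f)=(1)$, and then \Cref{thm:nonneg} applies. You supply more detail for why that intersection is empty (via the Galois-orbit/CRT description, or equivalently the Artinian reduced algebra decomposition), but the underlying idea is the same.
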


We note that since every Archimedean field $\kk$ (i.e. a field where for every $x\in \kk$, there exists $n\in \NN$ such that $x<n$) can be embedded as a subfield of $\RR$ by H\"older's embedding theorem (see e.g. \cite[Th.~2.1.10]{knebuschRealAlgebraFirst2022}), \Cref{thm:A} and \Cref{thm:B} hold for any Archimedean field $K$. 

When $K=\QQ$, we can assume furthermore that  the polynomials $q_{i,k}$ in  \Cref{eq:main_result} belong to the free $\ZZ$-module $\vspan{B}_{\ZZ}$ generated by the monomial basis $B$, by taking a common denominator of their  coefficients and dividing the weights $\omega_{i,k}$ by the square of this common denominator. Our next result deals with the size of these coefficients in the particular case of a radical zero-dimensional ideal $I$.

\begin{theoremintro}[{See Theorems \ref{thm:f2 strict pos} and \ref{thm:Thethm2}}]
    \label{thm:C}
Let  $f, g_1, \dots , g_r,  h_1, \dots , h_s$ be polynomials in $ \ZZrng $ of degrees bounded by $d$ and heights  bounded by $\tau$.
Assume that  $I = (\vb h) \subset \QQrng$ is a radical zero-dimensional ideal, and let $S$ and $B$ be as in \Cref{thm:A}, with 
 $\delta:= \max\{d,\deg(B)\}$.
 
If $f>0$ on $S$ (resp. $f\ge 0$ on $S$), 
then the following holds for {an SoS} representation as in \eqref{eq:main_result} of $f$:
  \begin{itemize}\item
  $\omega_{0,k}\in \QQ_{\ge 0}$
    and $q_{0,k}\in \vspan{B}_{\ZZ}$ (resp. $q_{0,k}\in f\cdot \vspan{B}_{\ZZ}$) with  heights bounded by $\widetilde {\cl O}(d^{3n})\,\delta\, \tau$; \item  $\omega_{i,k}\in \QQ_{\ge 0}$ and $q_{i,k}\in \vspan{B}_{\ZZ}$ (resp. $q_{i,k}\in f\cdot \vspan{B}_{\ZZ}$) with heights bounded by $\widetilde{\cl O}(d^{2n})\, \delta \, \tau$ for $1\le i\le r$;
    \end{itemize}
If in addition, $\bm h=\{h_1,\dots,h_s\}$ is a graded basis of $I$, then $\deg(q_{i,j}) \le n d$ (resp. $\deg(q_{i,j})\le (n+1) d$), and $p_j\in \QQrng$ with degrees bounded by $\widehat d-\deg(h_j)$ and heights bounded by $\widetilde{\cl O} (d^{2n})\, \delta\, \tau + \widetilde{\cl O}({\widehat d}^{\,n}) \tau$, for $1\le j\le s$, where  $\widehat d: = 2(d + \deg(B))+1$. 
\end{theoremintro}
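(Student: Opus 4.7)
The plan is to start from the qualitative representation delivered by Theorem~A and to make every choice in its proof effective over $\QQ$, exploiting the radicality of $I$ to control all sizes via evaluation on the finite set $V_{\CC}$.

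First I would set up the explicit Lagrange interpolation machinery. Since $I\subset\QQrng$ is radical and zero-dimensional, the $\CC$-algebra $\CCrng/(I)$ is isomorphic to $\CC^{V_{\CC}}$ by evaluation, so there exist idempotents $\bar e_\xi\in\vspan{B}_{\CC}$ with $\bar e_\xi(\xi')=\delta_{\xi,\xi'}$ on $V_{\CC}$. By classical height estimates for the points of a zero-dimensional polynomial system (Mignotte--Canny--Krick--Pardo type bounds), each coordinate of $\xi\in V_{\CC}$ is an algebraic number over $\QQ$ of degree at most $D\le d^n$ and logarithmic height $\widetilde{\cl O}(d^{n-1})\tau$; consequently $f(\xi)$ and $g_i(\xi)$ have heights bounded by $\widetilde{\cl O}(d^n)\tau$, and the coefficients of $\bar e_\xi$ in $B$ have heights of order $\widetilde{\cl O}(d^n\delta)\tau$ after clearing the Vandermonde-type denominators.

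Second, I would make the representation of Theorem~A explicit through the identity
\[
f \;\equiv\; \sum_{\xi\in V_{\CC}} f(\xi)\,\bar e_\xi^{\,2} \pmod{I}.
\]
For $\xi\in V_{\CC}\cap S$ the value $f(\xi)$ is nonnegative (strictly positive in the first case), so $f(\xi)\bar e_\xi^{\,2}$ is directly an SoS term weighted by a nonnegative number (carrying a factor of $f$ in the nonnegative case since $\sqrt{f(\xi)}$ need not be rational, which is why $q_{0,k}\in f\cdot\vspan{B}_{\ZZ}$). For $\xi\in V_{\CC}\setminus S$ there is some $g_{i(\xi)}$ with $g_{i(\xi)}(\xi)<0$, and one rewrites $f(\xi)\bar e_\xi^{\,2}= \lambda_\xi\, g_{i(\xi)}\bar e_\xi^{\,2} + (\textrm{element of }I)$ with $\lambda_\xi\ge 0$. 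Summing over Galois orbits of $V_{\CC}$ produces a representation in $\vspan{B}_{\QQ}$; equivalently, the squared-idempotent Gram matrix, expressed in $B$ and restricted to each orbit block, is positive semidefinite with a known rank, which supplies the strict feasibility required in Theorem~A to descend the coefficients from $\RR$ to $\QQ$ without losing positivity. For the nonnegative case, radicality of $I$ makes the hypothesis $(I\colon f)+(f)=(1)$ automatic (since $f$ vanishes identically on the subset of $V_{\CC}$ where it would be negative), so Theorem~A applies verbatim with $q_{0,k}\in f\cdot\vspan{B}_{\ZZ}$.

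Third, I would carry out the bitsize accounting. Heights combine additively under sums, products and squarings by standard height inequalities, and the bottleneck is the Galois-orbit sum followed by the squaring of $\bar e_\xi$. The terms indexed by $i\ge 1$ use only the rational numbers $f(\xi)$, $g_i(\xi)$ and the coefficients of $\bar e_\xi^{\,2}$, whose heights multiply to $\widetilde{\cl O}(d^{2n})\delta\tau$; the $q_{0,k}$ terms carry an extra factor from the ambient quadratic extension generated by $\sqrt{f(\xi)}$ (or, in the nonneg case, from the additional $f$ factor), which inflates the bound to $\widetilde{\cl O}(d^{3n})\delta\tau$. For the multipliers $p_j$ in the graded-basis case, the degree bound from Theorem~A forces $\deg(p_jh_j)\le \widehat d$, and the coefficients are obtained by solving a linear system coming from a truncated Macaulay matrix of size $\cl O(\widehat d^{\,n})$; Cramer's rule and Hadamard's inequality yield the stated height bound $\widetilde{\cl O}(d^{2n})\delta\tau+\widetilde{\cl O}(\widehat d^{\,n})\tau$.

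The main obstacle I expect is the simultaneous control of positivity and rationality of the Gram matrices after Galois averaging: descending from the canonical $\CC$-certificate to a $\QQ$-certificate with certifiable PSD blocks requires either a clean orbit-by-orbit argument (which is transparent when $V_{\CC}\cap S$ and $V_{\CC}\setminus S$ are each Galois-stable, but needs extra care otherwise) or a quantitative perturbation relying on a lower bound for $f$ on $S$ in terms of $d,n,\tau$. Both routes reduce in the end to effective height estimates for resultants and discriminants of the system $\vb h$, for which we invoke the established zero-dimensional effective Nullstellensatz bounds used throughout the paper.
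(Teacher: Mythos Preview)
Your outline is in the right spirit but has two genuine gaps and two mis-identifications of where the bounds come from.

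\textbf{Complex roots are not handled.} Your starting identity $f\equiv\sum_{\xi\in V_{\CC}}f(\xi)\,\bar e_\xi^{\,2}\pmod I$ is correct, but for a non-real pair $\zeta,\bar\zeta\in V_{\CC}\setminus V_{\RR}$ the real polynomial $f(\zeta)\bar e_\zeta^{\,2}+f(\bar\zeta)\bar e_{\bar\zeta}^{\,2}$ is \emph{not} a nonnegatively weighted real SoS: writing $f(\zeta)=a+ib$ and $\bar e_\zeta=p+iq$ with $p,q$ real, the associated $2\times2$ form has matrix $\begin{pmatrix}a&-b\\-b&-a\end{pmatrix}$, of determinant $-(a^2+b^2)<0$, hence indefinite. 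The paper repairs this by adding, for each such pair, a term $2\lambda\,\bar e_\zeta\bar e_{\bar\zeta}\equiv 0\pmod I$ with $\lambda>|f(\zeta)|$, which makes the block positive definite (their identity~\eqref{eq:identity}), and then controls the resulting $\lambda$'s and $\mu$'s (Lemmas~\ref{lem:omegarho2}, \ref{cor:precround2}). Your sketch never introduces this correction, so at present it does not produce a real SoS, let alone a rational one.

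\textbf{Descent to $\QQ$.} ``Summing over Galois orbits'' does not give what you want: a single $\mathrm{Gal}(\overline{\QQ}/\QQ)$-orbit can mix real points in $S$, real points in $V_{\RR}\setminus S$, and complex conjugate pairs (think of $x^3-2$), so the orbit sum carries simultaneously positive, negative and indefinite $2\times2$ blocks; there is no reason for it to be PSD. The paper's mechanism is different and concrete: build a real positive-definite Gram matrix $\widetilde Q$ as above, bound $\sigma_{\min}(\widetilde Q)$ from below via the Vandermonde singular-value estimate (Lemma~\ref{lem:VdM}), \emph{round} $\widetilde Q$ to a rational matrix at precision finer than $\sigma_{\min}(\widetilde Q)$, and \emph{project} onto the affine constraint $\mathcal L_f=\{Y:f\equiv BYB^t\bmod I\}$ (Propositions~\ref{prop:Lf} and~\ref{prop:reduction}). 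Positive-definiteness survives because the projection moves the matrix by less than its distance to the singular cone.

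\textbf{Source of the exponents.} The jump from $\widetilde{\cl O}(d^{2n})\delta\tau$ to $\widetilde{\cl O}(d^{3n})\delta\tau$ for the $q_{0,k}$ is not due to any quadratic extension $\QQ(\sqrt{f(\xi)})$; it comes from the square-root-free Cholesky factorisation of the $D\times D$ integer Gram matrix $Q_0$ (Proposition~\ref{prop:cholesky bound}), which multiplies heights by $D\le d^n$. Likewise, in the nonnegative case the factor $f$ in $q_{i,k}\in f\cdot\vspan{B}_{\ZZ}$ arises from the identity $f\equiv\gamma^{-1}af^2\bmod I$ with $a>0$ on $S$ (Lemma~\ref{lem:a b gamma}), after which one applies the strictly-positive case to $a$ and multiplies all squares by $f^2$; it has nothing to do with irrationality of $\sqrt{f(\xi)}$.

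Your treatment of the $p_j$ via a Macaulay-type linear system and Hadamard's inequality is the same as the paper's (Proposition~\ref{prop:reduction}).
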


In \Cref{thm:C} the (logarithmic) {\em height} of a polynomial $p\in \ZZrng$ is the maximum base 2-logarithm of the absolute values  of its coefficients, and the height  of a  rational a polynomial $p\in \QQrng$ is  the maximum height of the numerator and a common denominator in a primitive representation $p=\frac 1 \nu \widehat p$ with $\widehat p\in \ZZrng$ and $\nu\in \NN$. 
The notation $\widetilde{\cl O}(M)$ means to be bounded by a linear function of $M$ up to logarithmic factors of $M$. 

The bounds in \Cref{thm:C} involve B\'ezout type bounds  derived from \cite{KPS01} for the height of the Chow form of $V_{\CC}$.
These bounds could be refined using mixed-volume bounds as in  \cite{Martinez2018} or \cite{Emiris2020}, at the cost of a genericity assumption: we won't follow this path hereafter.

\subsection{Related works and Putinar-type representations}
\label{sec:applications}
\label{sec:2}

\paragraph{Putinar-type representations over Archimedean fields.}
In order to better compare our work with the literature, we fix some notation about quadratic modules.
For a field $\kk\subset \RR$, we denote by  $\Sigma_{\kk}^2$ the convex cone of weighted \emph{Sums of Squares} (SoS) of polynomials over $\kk$:
$$
\Sigma_{\kk}^2 := \big\{ \, f \in \kkrng\ : \  \exists \, r
\in \NN, \omega_i \in \kk_{\geq 0}, \ q_i \in \kkrng \mbox{\ s.t\ } f = \omega_1\, q_1^2 + \dots + \omega_r q_r^2
\,\big\}
$$
Here we choose to take positive \emph{weighted} SoS in $\Sigma_K^2$, i.e. multiplying by the nonnegative weights $\omega_i \in \kk_{\geq 0}$, and not classical SoS.  When $K = \QQ$ or $K = \RR$, weighted SoS and standard SoS coincide, since in these two fields every nonnegative element is a sum of squares. However, this is not true in general, e.g. 
$\sqrt{2}$ is not a Sum of Squares in $K=\QQ(\sqrt{2})$; see also \cite[Cor.~1.1.12]{knebuschRealAlgebraFirst2022}.
The choice of weighted SoS is the standard choice when working with ordered fields which are not real closed, see e.g. \cite{Prestel2001, Schweighofer2002}. 

The \emph{quadratic module} generated by the polynomials 
$\vb g= \{g_{1}, \ldots,
g_{r}\}$ and $\pm \vb h= \{\pm h_1, \ldots, \pm h_s\}$ is 
\begin{equation}\label{eq:quadratic module}
Q_{\kk} = Q_{\kk}(\bm g, \pm \bm h) := \Sigma_{\kk}^2+ \sum_{i=1}^{r}\Sigma_{\kk}^2\cdot g_i + \sum_{j=1}^{s}  \kkrng \cdot  h_{j} \subset \kkrng
\end{equation}

 Putinar's Positivstellensatz \cite{putinarPositivePolynomialsCompact1993} states that any real polynomial that is  strictly positive on a compact  semialgebraic set $S$ can be represented in the quadratic module $Q_\RR$, under the assumption that $Q_\RR$ is {Archimedean}, i.e. that there exists $g\in Q_\RR$ such that $\{\xi\in \RR: g(\xi)\ge 0\}$ is compact.
 While Schm\"udgen's Positivstellensatz (closely related to Putinar's one) is known to hold over any Archimedean field $K \subset \RR$, see e.g. \cite{Schweighofer2002}, it is not {known} if the same is true for Putinar's Positivstellensatz, because it is uncertain if $Q_\RR$ Archimedean implies that $Q_K$ is Archimedean.
 
 But special results are known over the rational numbers: if $S$ is defined by polynomial with rational coefficients, $Q_\RR$ is Archimedean and $f \in \QQrng$ is strictly positive on $S$, then $f \in Q_\QQ$ if:
\begin{itemize}
    \item $R-x_1^2-\dots - x_n^2 \in Q_\QQ$ for some $R \in \QQ_{\ge 0}$ (see \cite{Powers2011});
    \item $S$ satisfies the unisolvent property\footnote{i.e. the polynomials appearing in the decomposition can be uniquely recovered by their values on the semialgebraic set $S$.}, implied by the non-emptyness of the interior of $S \subset \RR^n$ in the Euclidean topology (see \cite{Davis2022}). 
\end{itemize}
{In our setting, $Q_K$ satisfies the Archimedean condition by taking $g=-\sum h_j^2 \in Q_K$, but none of the conditions above are obviously satisfied: it is an open question whether the Archimedean condition over $\QQ$ implies the condition in \cite{Powers2011}, and the unisolvent property for all polynomials of bounded degree cannot occur if the semialgebraic set is zero-dimensional.}

The paper \cite{Magron2023} analyzes the special case of a nonnegative polynomial $f\in \QQrng$ over $\RR^n$ and proves that when  its gradient ideal  $I_{\mathrm{grad}} = (\partial_{x_1}f, \dots , \partial_{x_n}f)$ is a radical zero-dimensional ideal and the infimum $f^*=\inf\{f(\xi):\xi\in \RR^n \} $ is attained, then $f$ has a representation as a rational SoS modulo $I_{\mathrm{grad}}$.

For strictly feasible Gram matrix representations, the existence of rational SoS has been proved in \cite{peyrlComputingSumSquares2008} and an algorithm based on rounding techniques has been proposed.
See also \cite{Davis2022}.

The univariate case over the rational numbers has been investigated for instance in \cite{Magron2019} and  \cite{KMS2023}. The statement of \Cref{thm:A} and also its assumption for the nonnegative case are  a natural generalization of the latter to finite semialgebraic sets in $\RR^n$. 

To the authors' best knowledge, \Cref{thm:A} is the first result establishing the existence of a general Putinar-type representation for strictly positive polynomials over the rational numbers, for our very particular setting of finite semialgebraic sets.

\paragraph{Minimality of our assumption in the nonnegative case.}
As shown in \Cref{exp:x3}, not all nonnegative polynomials on $S$ have a representation in $Q_\kk$.
In the case of $K=\RR$, Theorem \ref{thm:A}  can  be viewed as  an extension of \cite{Parrilo2002}, which holds for nonnegative polynomials on finite semialgebraic sets $S$ defined by a radical zero-dimensional ideal $I$, to  an arbitrary zero-dimensional ideal $I$ for $f$ satisfying $(I:f)+(f)=(1)$. Moreover, \Cref{thm:B} can be seen as a full extension  of the same paper's result to other fields $K\subset \RR$, and in particular to the field $\QQ$.

{Another related work over $\RR$ is \cite{Hua2025}. In this work, it is assumed that  the first $n$ equations $h_1, \ldots, h_n$ have no root at infinity so that they form a graded basis (see \cite[Th.~3]{Hua2025}).
It is also assumed that the zeros of $I$ where $f-f_*$ vanishes are simple, which implies that $(I \colon (f-f_*)) + (f-f_*) = (1)$. 
The degree bounds on the SoS representation (see \cite[Th. 1]{Hua2025}) depend on $\mathfrak{n}=\sum_{i=1}^n \deg(h_i)-n$, which is the minimal degree of a basis of the quotient by $(h_1, \ldots, h_n)$. \Cref{thm:A} generalizes the degree bounds of \cite{Hua2025} by providing degree bounds for any subfield $K\subset \RR$ in terms of the minimal degree of a basis $B$ of the quotient by $(h_1, \ldots h_s)$, which can be substantially smaller than $\mathfrak{n}$, and by allowing a weaker regularity condition. See e.g. \Cref{ex:singular} for an example where \Cref{thm:A} applies, but \cite[Th. 1]{Hua2025} does not.%
}

We now discuss the minimality of our assumption $(I:f)+(f)=(1)$ in general.
For zero-dimensional ideals, as already mentioned,  $Q_\RR$ is an Archimedean quadratic module (see also \cite[Cor.~7.4.4]{Marshall2008}), and therefore, by \cite[9.1.2]{Marshall2008}  \[f\in Q_\RR \ \Longleftrightarrow \ f \in Q_\RR + (f^2) =\Sigma_{\RR}^2+ \Sigma_{\RR}^2 \cdot g_1 + \dots + \Sigma_{\RR}^2 \cdot g_r + I_{\RR}+ (f^2). \]
Since we can readily verify that $(I:f)+(f)=(1)$ is equivalent to $f\in I +(f^2)$, we then see that our necessary assumption is stronger, but in the same spirit, than the necessary and sufficient one.

We also notice that all the results of this paper can be extended to any finitely generated quadratic module $Q_\KK$ where the ideal $I = Q_\kk\cap - Q_\kk$, called \emph{support} of $Q_\kk$, is zero-dimensional. In this case the associated semialgebraic set is finite, and the result can be obtained by replacing the $h_j$ by generators of the support of $Q_\kk$. This might induce a shift of the degree 
in the SoS representations, due to the degree of the SoS representations of the generators of the support in $Q_\kk$.

\paragraph{Degrees bounds for Putinar-type representations.}
The degrees of Putinar-type representations for strictly positive polynomials on general $S$ have been investigated in \cite{Nie2007, Magron2021} and more recently in \cite{Baldi2022, Baldi2025}, respectively with exponential bounds and polynomial bounds depending on the minimum $f^*$ of $f$ on $S$: the closer $f$ is having a zero on $S$, the bigger is the degree needed for the representation. {These bounds have also been generalized to the matrix case, see \cite{Huang2025}.}

In our results, the degree bounds depend on the regularity of $\rng/I$, of order $\cl O(n\,d)$, but they are independent of $f^*$.
This can be explained using the concept of \emph{stability} (see for instance \cite[Sec.~6]{baldiDegreeBoundsPutinar2024}) that occurs 
in the case of zero-dimensional semialgebriac sets.
We discuss the degree bounds in more details in \Cref{sec:app}, in connection with Polynomial Optimization. 

Regarding the optimality of these bounds, 
it is shown in \cite{grigorievLinearLowerBound2001}, that the degree of {an SoS} certificate for deciding the emptyness of a basic semialgebraic set is at least $c\, d^n$ for some $c\in \RR_{>0}$.
As discussed in \Cref{sec:app}, the bounds on the degree in \Cref{thm:C} are tight in the case of binary polynomials $h_i=x_i^2-x_i$ and linear constraints $g_i$ (see also \cite{kurpiszHardestProblemFormulations2015}).

\paragraph{Heights for Putinar-type representations over $\QQ$.}
Obtaining good bitsize bounds in real algebra is a (mostly open) research challenge. For instance, for the (more general) problem of certifying the insatisfiability of sign conditions using the Krivine-Stengle Positvstellensatz, the only known bitsize bound, provided in the recent seminal work \cite{Lombardi2020}, reads as:
\[
            2^{2^{\left(2^{\max \{2, d\}^{4^{n}}}+m^{2^d} \max \{2, d\}^{16^{n}\mathrm{bit}(d)}\right)}}
\]
where $n$ denotes the number of variables, $m$ the number of input polynomials and $d$ a bound on their degrees. 

For a general Putinar-type representation over $\QQ$, the  following height bound of similar order, which fixes some estimates in \cite{Magron2021}, was presented in \cite{https://doi.org/10.48550/arxiv.1811.10062}:
\[  
    (r+1)\left(2^{n \tau d^{2 n+2}}\right)^{\mathcal O (2^{n \tau d^{2 n+2}})},
\]
where $r$ is the number of inequality constraints and $\tau$ is a height bound for the input.
A height analysis has also been developed in \cite{Davis2024},  involving the condition number of a certain Hankel matrix or the distance to the boundary of a cone, while the (worst-case) bounds in \Cref{thm:C} only depend on the number of variables, the height and the degree of the input. 

In \cite{Magron2023}, bitsize bounds for the coefficients in the {SoS} representation modulo the gradient ideal, wich is radical zero-dimensional, are of order  
$\cl O((n+d+\tau) d^{5n+2})$ for the coefficients of $\omega_{0,k}$ and $ q_{0,k}$.  The polynomial coefficients $p_j$ are given in terms of the lexicographic Gr\"obner basis of $I$, under a Shape Lemma assumption, and have heights bounded by 
$\widetilde{\mathcal O}\left(n (\tau+n+d) d^{3 n+1}\right)$. 

 The height bounds obtained in \Cref{thm:C}, which are polynomial in terms of the maximum degree $d$ of the input integer polynomials, linear in the  maximum height $\tau$ of the coefficients and the Castelnuovo-Mumford regularity, and simply exponential in terms of the number of variables $n$, are to our knowledge the first simply exponential bounds for the heights in such SoS representations for arbitrary finite basic closed semialgebraic sets.

\paragraph{Bitsize lower bounds for rational SoS representations.}
Determining lower bounds for the bitsize (and also the degrees) of SoS representations is a mostly open challenge with important applications. For instance, in Theoretical Computer Science, there is interest in studying the question of certifying emptiness of a given (finite) semialgebraic set $S$, by providing {an SoS} representation for $-1$, and the bitsize of such SoS representation governs the complexity of the problem.
It has also been observed in \cite{odonnellSOSNotObviously2016,raghavendraBitComplexitySumofSquares2017} that the bitsize of  Positivstellens\"atze representations can grow exponentially in the number of variables.
Lower bounds on the bitsize for low degree representations are also provided in these papers, showing  in a particular example where $d=2$  that any degree $2$ SoS representation of a nonnegative polynomial must have bitsize in $\Theta(d^n)$. 
Unfortunately, this lower bound does not apply in our case, since the given generators of the ideal of this example do not form a graded basis.
We also notice that in \cite[Th.~1]{raghavendraBitComplexitySumofSquares2017}, the existence of {an SoS} representation over $\QQ$ is assumed, whereas Theorems \ref{thm:A} and \ref{thm:B} provide existential results for such representations. 
Finally, we mention that the tradeoff between bitsizes and degrees of representations in the Positivstellens\"atze has been investigated in \cite{atseriasSizeDegreeTradeOffsSumsofSquares2019, hakoniemiMonomialsizeVsBitcomplexity2021}.

\subsection{Applications to discrete polynomial optimization}
\label{sec:app}
An important area of applications of {SoS representations} is Polynomial Optimization, which relies on the celebrated  moment/SoS hierarchy \cite{lasserreGlobalOptimizationPolynomials2001a, parriloStructuredSemidefinitePrograms2000}, that we recall briefly. 

 To define this hierarchy, we need the \emph{truncated quadratic module} generated, in degree $\le d$, by the polynomials 
$\vb g$ and $\pm \vb h$: 
$$
Q_{\kk,d} = Q_{\kk,d}(\bm g, \pm \bm h)  := \Sigma_{\kk,d}^2+ \sum_{i=1}^{r}\Sigma_{\kk,d-\deg(g_{i})}^2\cdot g_i+ \sum_{j=1}^{s}  \kkrng_{d-\deg(h_{j})}\cdot  h_{j} \subset \kkrng_{d}
$$
where $$\Sigma^{2}_{\kk,k} := \Sigma^{2}_{\kk}\cap \kkrng_{k}= \{ \, \omega_1\, q_1^2+\dots +\omega_r\, q_r^2 \colon r \in \NN, \omega_i\in \kk_{\ge 0}, \ q_i \in \kkrng_{\lfloor k/2 \rfloor} \}\subset \kkrng_{k}$$ 
denotes SoS of degree $\le k$.

Consider the problem of the minimization of a polynomial map $f$ on a finite basic closed semialgebraic set  $S=S(\bm g,\bm h) \subset \RR^n$ 
as in \eqref{eq:s}, and denote $f^*= \inf_{x\in S} f(x)$, the minimum of $f$ on $S$.

Given any integer $\ell \in \NN$ we can define a lower bound for $f^*$ as:
\begin{equation*}
    f_{(\ell)} := \sup \Big\{ \, m \in \RR \,:\, f - m \in Q_{\RR, 2\ell} \,\} \le f^* 
\end{equation*}
which is the order $\ell$ of the so-called \emph{SoS hierarchy}. 
By definition we have $f_{(\ell)} \le f_{(\ell+1)} \le f^*$ for all $\ell \in \NN$.
The hierarchy of dual convex optimization problems
\begin{equation*}
    f^{(\ell)} := \inf \Big\{ \, \lambda(f) \,:\,\lambda \in {\rng_{2\ell}^{*}}, \lambda(1)=1,\, \lambda(q)\ge 0\ \mbox{for all} \ q \in Q_{\RR,2\ell}\} \le f^* 
\end{equation*}
is called the \emph{moment hierarchy}. We have $f^{(\ell)}\le f^{(\ell+1)} \le f^*$ and $f_{(\ell)}\le f^{(\ell)}$.

A natural question is about the \emph{finite convergence} of such hierarchy, namely: does there exist $\ell \in \NN$ such that $f_{(\ell)} = f^{(\ell)}= f^*$? In this case, can we certify this convergence, providing an explicit representation $f - f^* = \sigma_0 + \sum_{i=1}^{r}\sigma_i  g_i+ \sum_{j=1}^{s}  p_j   h_{j} \in Q_{\RR}$ or $Q_{\QQ}$? 

\Cref{thm:A} provides an answer to these questions, when applied to the polynomial $f-f^*$.

\begin{corollary}\label{cor:pop_finite}
Let 
   $\bm g,  \bm h  \subset  \QQrng $ be as in \eqref{eq:gh}, where $\bm h$ is a  graded basis of the zero-dimensional ideal  $I \subset \QQrng$,   $B$  be a monomial basis of $\QQrng \big / I$, and
 $S\subset \RR^n$ be the finite basic closed semialgebraic set defined in \eqref{eq:s}.
 
   Let $f\in \QQrng$ and  $f^*$ be the minimum of $f$ on $S$. If  $(I \colon f-f^*) + (f-f^*) = (1)$, then $f_{(r)} = f^*$ for $$r \ge \frac{1}{2} \max \{ \deg (f), 2\deg(B) + \deg (g_1), \dots , 2\deg(B) +\deg(g_r) \}. $$
    Furthermore, $f-f^*$  has a certificate of nonnegativity as in \Cref{eq:main_result}   using polynomials of degree $\le 2r$, which belong to $\QQrng$ when  $f^*\in \QQ$.
\end{corollary}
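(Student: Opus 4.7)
The plan is to apply Theorem~\ref{thm:A} (the nonnegative case) directly to the polynomial $f-f^*$. By definition of $f^*$, we have $f-f^* \ge 0$ on $S$, and the assumption $(I \colon f-f^*) + (f-f^*) = (1)$ is precisely the hypothesis needed for the nonnegative case of Theorem~\ref{thm:A}. Note that $a$ priori $f^*$ is only real, so we first apply Theorem~\ref{thm:A} with $K = \RR$ (or more precisely, any field of $\RR$ containing the coefficients of $f-f^*$).

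Theorem~\ref{thm:A} then yields a representation
$$
f - f^* = \sum_{k=1}^D \omega_{0,k}\, q_{0,k}^2 + \sum_{i=1}^r \Big(\sum_{k=1}^D \omega_{i,k}\, q_{i,k}^2\Big)\, g_i + \sum_{j=1}^s p_j\, h_j
$$
with $\omega_{i,k} \in \RR_{\ge 0}$, $q_{i,k} \in \vspan{B}_{\RR}$, and — since $\vb h$ is assumed to be a graded basis — the degree control
$\deg(p_j h_j) \le \max\{\deg(f-f^*), \deg(g_i) + 2\deg(B) : 1\le i \le r\}$ for all $j$.

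Next I would carefully verify that every summand has degree at most $2r$, which is the content of the claim $f - f^* \in Q_{\RR, 2r}$. Since $q_{i,k} \in \vspan{B}_\RR$, one has $\deg(q_{i,k}^2) \le 2\deg(B) \le 2r$, so the first term lies in $\Sigma^2_{\RR,2r}$; for $i \ge 1$, $\deg(q_{i,k}^2 g_i) \le 2\deg(B) + \deg(g_i) \le 2r$; and since $\deg(f-f^*) \le \deg(f)$, the degree bound from Theorem~\ref{thm:A} gives $\deg(p_j h_j) \le 2r$. Hence $f - f^* \in Q_{\RR,2r}$, which immediately yields $f_{(r)} \ge f^*$. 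Combined with the standard inequality $f_{(r)} \le f^*$ (valid for any $\ell$), this proves $f_{(r)} = f^*$.

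For the last assertion, if $f^* \in \QQ$ then $f - f^* \in \QQrng$ satisfies the hypotheses of Theorem~\ref{thm:A} over the field $K = \QQ$, so applying that theorem with $K = \QQ$ directly produces a certificate with $\omega_{i,k} \in \QQ_{\ge 0}$ and $q_{i,k}, p_j \in \QQrng$, of degree at most $2r$ by the same computation. No substantial obstacle is expected: the argument is essentially a specialization of Theorem~\ref{thm:A} to $f - f^*$, and the only point requiring care is the degree bookkeeping for the $p_j h_j$ terms, which relies crucially on the graded basis assumption on $\vb h$.
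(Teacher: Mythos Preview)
Your proof is correct and follows exactly the route indicated in the paper, which simply states that ``\Cref{thm:A} provides an answer to these questions, when applied to the polynomial $f-f^*$'' without spelling out further details. Your degree bookkeeping is precisely what is needed: the only point worth noting is that the inequality $\deg(q_{0,k}^2)\le 2\deg(B)\le 2r$ uses $2r\ge 2\deg(B)+\deg(g_i)\ge 2\deg(B)$, so implicitly at least one constraint $g_i$ (or $g_0=1$) is present---but this is a feature of the corollary's statement, not a flaw in your argument.
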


The hypotheses of \Cref{cor:pop_finite} might at first seem difficult to verify, but they are satisfied for most of the concrete instances of optimization problems on finite sets.
The degree bounds are similar to those that can be deduced implicitly in \cite{Parrilo2002} for SoS representations over $\RR$, in the case of a zero-dimensional ideal $I$.

The most prominent example of this is 0/1 optimization. Indeed, $$\{ 0, 1 \}^n=V(x_1^2-x_1,\dots,x_n^2-x_n),$$ where for $h_i=x_i^2-x_i$,  $\bm h=\{h_1,\dots,h_n\}$ is a graded basis of the  zero-dimensional {\em radical} ideal  $I = (\bm h)$, and $B=\{x_1^{e_1}\cdots x_n^{e_n}: e_i\in\{0,1\} \}$ has  degree $n$. Moreover, since $\{ 0, 1 \}^n\subset \QQ^n$ we have   $f^*\in \QQ$ and therefore \Cref{cor:pop_finite} fully applies. 

The finite convergence of different families of convex relaxations for $0/1$ programs has been studied since a long time, see e.g. \cite{Sherali1990, Lovsz1991}. The comparison of these techniques with the more recent moment/SoS hierarchy has been performed in \cite{Laurent2003}, demonstrating the strength of this new approach.
In \cite{Laurent2006}, the same degree bound of \Cref{cor:pop_finite} is provided for the finite convergence of the Moment hierarchy (i.e. $f^{(\ell)}$) and the SoS hierarchy (i.e. $f_{(\ell)}$), without deducing degree bounds for the SoS representation (when it exists). Similarly, sparse-type bounds  $2\ell \le n + \deg(f) - 1$ for the order $\ell$ of moment-like relaxations in the quotient algebra where $h_i=x_i^2-1$ and $V_{\RR}= \{-1, 1\}°^n$ have been obtained in \cite{Sakaue2017} (see also \cite{Fawzi2015}), without bounding the degree of SoS representations.

The bound of \Cref{cor:pop_finite} is tight in general, see for instance  \cite{kurpiszHardestProblemFormulations2015} and references therein: for binary optimization problems with $h_i=x_i^2-x_i$, finite convergence always happens at order $r = n$ when there are no inequalities, and at order $r = n+1$ when the $g_i$'s are linear. Applying \Cref{cor:pop_finite} (and assuming that $\deg(f) \le n$, which can be done by reducing $f$ modulo $\bm h$) we get the same bounds, and we furthermore show that a rational certificate of nonnegativity for $f-f^*$ exists. The cases when $f_{(n-1)} < f^*$ (resp. $f_{(n)} < f^*$) are identified and characterized in \cite{kurpiszHardestProblemFormulations2015}, showing that SoS representations may not exists in degree $< 2\delta$ (resp. $< 2\delta+ \max_i\deg(g_i)$) and that {the degree bounds of \Cref{thm:A} (and \Cref{cor:pop_finite}) are tight}. 

More generally, the results of \Cref{cor:pop_finite} can be directly applied when optimizing over any grid of points, see for instance \cite{lasserrePolynomialsNonnegativeGrid2002}.
We also notice that \Cref{cor:pop_finite} can be used to provide degree bounds and rational certificates for the exactness of \emph{$\theta$-body} approximations of polynomial ideals, see e.g. \cite{gouveiaThetaBodiesPolynomial2010}. These approximations are, under the assumption that $\bm h$ is a graded basis, essentially obtained using the moment/SoS hierarchy, see for instance \cite[Rem.~3.25]{netzerGeometryLinearMatrix2023}.

Finally, we remark that the zero-dimensional setting is important also for continuous polynomial optimization problems, as the latter can be reduced to the former by adding gradient \cite{nieMinimizingPolynomialsSum2006, Magron2023} or KKT \cite{demmelRepresentationsPositivePolynomials2007} constraints.

\subsection{Proof strategy and structure of the manuscript}

The proof of \Cref{thm:A} is developed in Sections \ref{sec:fpositive} and \ref{sec:fnonnegative}. We first consider in \Cref{subs:radical} the case 
when the ideal $I$ is {radical} and $f$ is strictly positive {on the whole $V_\RR(I)$}, obtaining   \Cref{thm:radical}. This is a refined version of \Cref{thm:A} in the radical case, with 
   an additional non-vanishing condition that will allow us to  lift our construction to a non-radical ideal $I$ in \Cref{subs:arbitrary}.   
    Then, we consider in  \Cref{subs:extension} the case when $f> 0$ on an {arbitrary finite basic closed semialgebraic set $S\subset \RR^n$} as in \eqref{eq:s}, by perturbing the original polynomial $f$, which is positive on $S$, to an appropriate polynomial positive on all $V_\RR(I)$. 
    Finally, we study in  \Cref{sec:fnonnegative} the case where {$f\ge 0$ on $S$ with $1\in (I:f)+(f)$}. In this case we show that we can replace our original polynomial $f$, which is nonnegative on $S$, by another related polynomial which is strictly positive on $S$, and conclude by applying the previous results. 

Sections \ref{sec:heightradical} to \ref{sec:7} analyze the  height bounds for the case $\kk=\QQ$ and $I$ radical: the results are summarized in \Cref{thm:C}.
In  \Cref{sec:heightradical}  we derive, from an arithmetic Bézout theorem, height bounds for different polynomials in the quotient ring $\QQrng/I$ that we need in the sequel. 
The cases of $f$ positive on $S$ and $f$ nonnegative on $S$ are then developed in \Cref{sec:6} and \ref{sec:7} respectively.

Finally \Cref{sec:8} presents an algorithm to obtain SoS representations of the kind previously studied, and some examples that illustrate our constructions. 

\section{SoS representation for $f$ strictly positive on $S$}\label{sec:fpositive}

In this section we consider Problem~1 stated in \Cref{sec:intro}, whose solution is summarized in \Cref{thm:A}, under the assumption that $f$ is strictly positive on $S$. Without loss of generality, we give all our proofs for $\kk=\QQ$ (the smallest  field included in $\RR$) since they mutatis-mutandis hold for any field $\kk\subset \RR$, using the fact that $\kk$ is dense in $\RR$.

\subsection{Notation and preliminaries}
\label{sec:preliminaries}

We fix the following notations that we will use in all our text.

\begin{itemize}
\item $\bm g=\{g_1,\dots,g_r\},\bm h=\{h_1,\dots,h_s\}\subset \QQrng$ are as in \eqref{eq:gh} and $S=S(\bm g, \bm h)\subset \RR^n$ is the basic closed semialgebraic set defined in \eqref{eq:s}. 
\item 
$I= (\bm h) \subset \QQ[\bm x]$ is the zero-dimensional ideal generated by 
$\bm h$, and for 
 $d\in \NN$, $\QQrng_d=\{ p\in \QQrng \colon \deg(p) \le d\}$ denotes the polynomials of degree $\le d$, and $I_{d}=I\cap \QQrng_{d}$. 
\item  $I_{\CC} $ denotes  the ideal  generated by $(\bm h)$  in $\CC[\bm x]$ and   $I_{\RR} $ denotes the ideal it generates in $\RR[\bm x]$. Analogously, 
$V_{\CC}:= V_{\CC}(I)$  is the affine variety defined by $\bm h$  in $\CC^n$ and $V_{\RR}:= V_{\RR}(I)$  is the affine variety defined by $\bm h$  in $\RR^n$. 

\item  $D:=\dim(\QQrng/I)=\#(V_\CC(I))$, where the roots are counted with multiplicities, which satisfies $D\le \displaystyle \max_j\{\deg(h_j)\}^n$ by  B\'ezout's theorem.

\item $S^D(\RR)$ (resp. $S^D(\QQ)$) denotes the set of symmetric matrices in $\RR^{D\times D}$ (resp. $\QQ^{D\times D}$).

\item  $B$ denotes a monomial basis of $\QQrng/I$ associated to the basis $\bm h$, with first element $1$ and then constructed by adding in the basis monomials obtained by multiplying with the variables $x_1, \dots , x_n$. We denote $\deg(B):= \max_{b\in B} \deg(b)$, which implies that  $\deg(B)<D$. We also denote by $\langle B\rangle_\QQ$ (resp. $\langle B\rangle_\RR$,  $\langle B\rangle_\CC$) the $\QQ$-vector space spanned by $B$ in $\QQrng$ (resp. $\RR$-vector and $\CC$-vector space  space in $\RR[\bm x]$ and $\CCrng$). We also observe that 
any polynomial $p\in \QQrng$ can  be decomposed as
$$ 
p = \sum_{j=1}^{s} p_j h_{j}  + \cl N(p)
$$
for a unique  $\cl N(p)\in \vspan{B}_\QQ$ (the {\em normal form} of $p$) and some $p_j\in \QQrng$ for $1\le j\le s$. Moreover, when  $\bm h$ is a \emph{graded basis} of $I$, i.e. when for any $d\in \NN$ we have
$$ 
I_{d}= \sum_{j=1}^{s}  \QQrng_{d-\deg(h_{j})} \cdot h_{j}, 
$$
we can take  $p_j\in \QQrng_{\deg(p)-\deg(h_{j})}$ for $1\le j\le s$. 

\item We will make use of the 
idempotents $u_{\zeta}\in \vspan{B}_{\CC}$ for $\zeta\in V_\CC(I) = V_\CC$,  which satisfy for $\zeta,\xi\in V_\CC$:
\begin{itemize}
 \item $ u_{\zeta}(\zeta)=1$ and $ u_{\zeta}(\xi)=0$ for $\xi\neq \zeta$;
 \item $u_{\overline \zeta}=\overline{ u_{\zeta}}$, and in particular $u_{\xi}\in \rng$ if $\xi\in V_{\RR}$;
 \item when  $I$ is a radical ideal (which implies that if $p\in \CC[\bm x]$ satisfies that $p(\zeta)=0$ for all $\zeta\in V_\CC$, then $p\in I$) we have $ u_{\zeta}^{2} \equiv  u_{\zeta} \mod I_\CC$ and 
 $u_{\zeta} \, u_{\xi}\equiv 0 \mod I_\CC$ if $\zeta\neq \xi$.
\end{itemize}
\end{itemize}

\subsection{The case when $S=V_\RR(I)$ for a radical  zero-dimensional  ideal $I$ }\label{subs:radical}
We assume in this section  that $I$ is a radical zero-dimensional ideal, i.e. in addition to be zero-dimensional,  we assume that $\sqrt{I}=I$, so that all the points $\zeta \in V_{\CC}$ have multiplicity $1$. We also assume that $\bm g =\emptyset $, so that $S$ coincides with the finite real algebraic variety $V_\RR = V_\RR(I)$.

Our goal is to extend the univariate case treated in \cite[Proposition 2.2]{KMS2023} to the multivariate setting. The final result of the section is the following theorem.

\begin{theorem}\label{thm:radical}
Let $I \subset \QQrng$ be a zero-dimensional radical ideal and let $f\in \QQrng$ be such that  $f(\xi)>0$ for all $\xi\in V_{\RR}(I)$. Let $B$ be a basis of $\QQrng\big /I$.  
 Then, for all $\zeta\in V_{\CC}(I)$,  there exist   $\omega_{\zeta}\in \QQ_{> 0}$  and $\theta_{\zeta}\in \vspan{B}_{\QQ}$   such that  
\begin{equation*}
{f} \equiv \sum_{\zeta\in V_{\CC}} \omega_{\zeta}\, \theta_{\zeta}^{2}\quad   \mod I.
\end{equation*}
Moreover, when $V_{\CC}\neq \emptyset$ there exists $\zeta_{0}\in V_{\CC}$  such that $\theta_{\zeta_{0}}(\zeta)\neq 0$  for all $\zeta\in V_{\CC}$.
\end{theorem}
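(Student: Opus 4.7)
The plan is to realize $f \mod I$ as $\vb b^T G \vb b$ for a positive-definite rational Gram matrix $G$ (where $\vb b = (b_1, \ldots, b_D)^T$ is the column vector of the monomial basis $B$ with $b_1 = 1$), diagonalize $G$ over $\QQ$ via Lagrange's rational reduction, and arrange that one summand is $1^{2}$ so the non-vanishing clause follows. The rational $G$ is produced from a real one by density of $\QQ$ in $\RR$, and the real one is built from the complex idempotents $u_\zeta \in \CCrng/I_\CC$.

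Define the $\QQ$-linear map $\Phi : S^D(\QQ) \to \QQrng/I$, $G \mapsto \vb b^T G \vb b \mod I$. Since the products $b_1 \cdot b_j = b_j$ already span $\QQrng/I$, the map $\Phi$ is surjective, so $\cl A_\QQ := \Phi^{-1}(f \mod I)$ is a nonempty $\QQ$-affine subspace, $\QQ$-dense in its real completion $\cl A_\RR \subset S^D(\RR)$. To produce a real positive-definite $G^* \in \cl A_\RR$, expand $u_\zeta = \sum_i a_{\zeta, i} b_i$ with $a_\zeta \in \CC^D$ and set $\chi_\xi := (b_i(\xi))_i \in \CC^D$, so that $a_\zeta^T \chi_\xi = u_\zeta(\xi) = \delta_{\zeta\xi}$ and $\{a_\zeta\}_{\zeta \in V_\CC}$ is a $\CC$-basis of $\CC^D$ by radicality. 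For each $\zeta \in V_\RR$ add the rank-$1$ real positive semidefinite contribution $f(\zeta)\, a_\zeta a_\zeta^T$. For each complex conjugate pair $\{\zeta, \bar\zeta\}$, with $A := (a_\zeta + a_{\bar\zeta})/2$ and $B := (a_\zeta - a_{\bar\zeta})/(2i) \in \RR^D$, form the real symmetric combination $\alpha AA^T + \beta BB^T + \gamma(AB^T + BA^T)$ whose value $\chi_\xi^T (\cdot) \chi_\xi$ equals $f(\zeta)$ at $\chi_\zeta$, $\overline{f(\zeta)}$ at $\chi_{\bar\zeta}$, and zero at the other $\chi_\xi$; then add a sufficiently large positive multiple of $AA^T + BB^T$, which lies in $\ker \Phi$ because its associated polynomial $(\vb b^T A)^2 + (\vb b^T B)^2$ vanishes on all of $V_\CC$ and $I$ is radical, making the pair's total contribution positive semidefinite of rank $2$. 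Since $\{a_\zeta\}_{\zeta \in V_\RR} \cup \{A, B\}_{\text{pairs}}$ is an $\RR$-basis of $\RR^D$, the sum $G^*$ has rank $D$ and is positive definite.

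By density of $\cl A_\QQ$ in $\cl A_\RR$ and openness of the positive-definite cone, there exists a positive-definite $G \in \cl A_\QQ$. To force $\theta_{\zeta_0} = 1$, write $G$ in block form with scalar $(1,1)$-entry $G_{11}$, off-diagonal column $g$, and lower-right block $G''$, and let $\omega_0 := G_{11} - g^T(G'')^{-1} g \in \QQ_{>0}$ (the Schur complement of $G''$ in $G$, positive since $G$ is positive definite). Then $H := G - \omega_0\, e_1 e_1^T$ is rational positive semidefinite of rank $D - 1$ with $\Phi(H) \equiv f - \omega_0 \mod I$. Applying Lagrange's rational diagonalization to $H$ yields $H = \sum_{k=2}^D \omega_k \ell_k \ell_k^T$ with $\omega_k \in \QQ_{>0}$ and $\ell_k \in \QQ^D$. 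Setting $\omega_1 := \omega_0$, $\theta_1 := 1 = b_1$, and $\theta_k := \ell_k^T \vb b \in \vspan{B}_\QQ$ for $k \ge 2$, we obtain $f \equiv \sum_{k=1}^D \omega_k \theta_k^2 \mod I$. Reindexing the $D$ summands by $\zeta \in V_\CC$ with an arbitrary $\zeta_0$ assigned to index $1$ yields the stated representation, and $\theta_{\zeta_0} = 1$ never vanishes on $V_\CC$.

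The main obstacle is the explicit construction of the real positive-definite Gram matrix $G^*$. The naive sum $\sum_\zeta f(\zeta) a_\zeta a_\zeta^T$ is complex when $V_\CC \not\subset \RR^n$, and its real pair-symmetrization $2\operatorname{Re}(f(\zeta) a_\zeta a_\zeta^T)$ is typically indefinite. The essential leverage is that $\ker \Phi$ has dimension $D(D-1)/2$ and contains, for each complex conjugate pair, the rank-$2$ positive semidefinite matrix $AA^T + BB^T$, supplying exactly the freedom needed to convert any real symmetric matrix in the fibre $\Phi^{-1}(f \mod I)$ into a positive-definite one.
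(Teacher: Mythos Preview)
Your proof is correct and shares the paper's overall architecture—build a real positive-definite Gram matrix in the fibre $\Phi^{-1}(f)$, rationalize by density, then diagonalize—but you handle the two technical steps differently. For the real Gram matrix, the paper (Lemmas~\ref{lem:sos:nempty} and~\ref{lem:sos:empty}) constructs explicit polynomials $\theta_\zeta$, including a carefully perturbed $v_{\zeta_0}$ of the idempotents so that the resulting $\theta_{\zeta_0}$ is nowhere zero on $V_\CC$; you instead observe that for each complex pair the rank-$2$ matrix $AA^T+BB^T$ lies in $\ker\Phi$ (its image being $u_\zeta u_{\bar\zeta}\in I$) and add enough of it to make the pair's block positive definite, avoiding the $v_{\zeta_0}$ construction entirely. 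For the non-vanishing clause, the paper tracks its distinguished $\theta_{\zeta_0}$ through rationalization and a square-root-free Cholesky factorization $Q=L\Delta L^t$, using that $L$ is lower triangular so the last column of $\Theta L$ is a scalar multiple of the last column of $\Theta$; you take the shorter route of splitting off $\omega_0\,e_1e_1^T$ via the Schur complement so that one $\theta$ is simply $b_1=1$. Your argument is more economical for the existential statement; the paper's explicit formulas, however, are not wasted—they are reused in Section~\ref{sec:6} to derive the height bounds of Theorem~\ref{thm:C}, where precise control of the weights and of $\sigma_{\min}(\widetilde Q)$ is needed.
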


We begin by considering in the next lemma the case where $V_\RR(I)\neq \emptyset$.

\begin{lemma}\label{lem:sos:nempty} Let $I \subset \rng$ be a zero-dimensional radical ideal such that that $V_\RR(I)\ne \emptyset$, and let 
 $f\in \rng$ be such that $f(\xi)>0$ for all $\xi\in V_\RR$. Let  $B$ be a basis of $\rng\big /I$.   Then  there exist $\theta_{\zeta}\in \vspan{B}_\RR$,  $\zeta\in V_{\CC}$, which form a basis of $\rng/I$,  such that
\begin{equation*}
  f \equiv  \sum_{\zeta \in V_{\CC}}  \theta_{\zeta}^2 
\quad \mod I_\RR .
\end{equation*} 
Moreover,  there exists  $\xi_0\in V_\RR$ such that $\theta_{\xi_0}(\zeta)\ne 0$ for all $\zeta \in V_{\CC}$.
\end{lemma}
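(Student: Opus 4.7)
The plan is, first, to produce some initial identity $f \equiv \sum_\zeta \theta_\zeta^2 \mod I_\RR$ with the $\theta_\zeta$ real and forming a basis of $\rng/I$, and second, to rotate this identity by a suitable real orthogonal matrix so that one of the $\theta_\zeta$'s additionally takes nonzero values at every point of $V_\CC$.

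For the initial identity I will use the Lagrange idempotents $(u_\zeta)_{\zeta \in V_\CC}$ recalled in \Cref{sec:preliminaries}. Because $I$ is radical and zero-dimensional, these form a basis of $\CCrng/I_\CC$ and satisfy $u_\zeta u_\eta \equiv \delta_{\zeta\eta} u_\zeta \mod I_\CC$ together with $u_{\bar\zeta} = \overline{u_\zeta}$. I choose square roots $\sqrt{f(\zeta)}$ compatibly with complex conjugation (positive on $V_\RR$, where $f>0$). For each real root $\xi \in V_\RR$ I set $\theta_\xi := \sqrt{f(\xi)}\, u_\xi$; for each nonreal conjugate pair $\{\zeta,\bar\zeta\}$ I pick $\alpha_\zeta, \beta_\zeta \in \CC$ with $\alpha_\zeta^2 + \beta_\zeta^2 = f(\zeta)$ and $\Im(\alpha_\zeta \bar\beta_\zeta) \ne 0$ (easily arranged by inspection, even when $f(\zeta)=0$), and set $\theta_\zeta := \alpha_\zeta u_\zeta + \bar\alpha_\zeta u_{\bar\zeta}$ and $\theta_{\bar\zeta} := \beta_\zeta u_\zeta + \bar\beta_\zeta u_{\bar\zeta}$. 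Each such $\theta$ is invariant under conjugation, hence lies in $\rng$. The idempotent relations then give $\sum_\zeta \theta_\zeta^2 \equiv \sum_\zeta f(\zeta)\, u_\zeta \equiv f \mod I_\RR$, and the matrix expressing $(\theta_\zeta)$ in the basis $(u_\zeta)$ of $\CCrng/I_\CC$ is block diagonal with invertible blocks of size $1$ (entry $\sqrt{f(\xi)}\ne 0$) and of size $2$ (determinant $2i\Im(\alpha_\zeta\bar\beta_\zeta) \ne 0$); hence after reduction modulo $I$ the $\theta_\zeta$ form a basis of $\rng/I$ lying in $\vspan{B}_\RR$.

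To enforce the ``moreover'' clause, I observe that for any real orthogonal matrix $O = (O_{\zeta\eta}) \in \RR^{D\times D}$ indexed by $V_\CC$, the polynomials $\tilde\theta_\zeta := \sum_\eta O_{\zeta\eta}\, \theta_\eta$ still satisfy $\sum_\zeta \tilde\theta_\zeta^2 = \sum_{\eta,\mu}(O^{T}O)_{\eta\mu}\, \theta_\eta\theta_\mu = \sum_\eta \theta_\eta^2 \equiv f \mod I_\RR$ and still form a basis. Fix $\xi_0 \in V_\RR$ (nonempty by hypothesis), and for $w \in \RR^D$ regard $\tilde\theta_{\xi_0}(\eta) = \sum_\zeta w_\zeta \theta_\zeta(\eta)$ as a function of $w$ and $\eta$. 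Using the block structure above, this equals $w_\xi \sqrt{f(\xi)}$ when $\eta = \xi \in V_\RR$, and $w_\zeta \alpha_\zeta + w_{\bar\zeta}\beta_\zeta$ when $\eta = \zeta \notin \RR$. Each of these is a nonzero $\RR$-linear map $\RR^D \to \CC$ (using $f(\xi)>0$, respectively $(\alpha_\zeta,\beta_\zeta)\ne(0,0)$), so its vanishing locus is a proper real subspace of $\RR^D$; the finite union over $\eta \in V_\CC$ does not cover $\RR^D$, so a unit vector $w$ avoiding all of them exists. Taking $w$ as the $\xi_0$-row of $O$ and completing by Gram--Schmidt produces the desired decomposition, which I then reduce modulo $I$ to land inside $\vspan{B}_\RR$ without affecting the values on $V_\CC$ or the basis property.

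The main obstacle I expect is precisely that the naive initial choice $\theta_\xi = \sqrt{f(\xi)}u_\xi$ makes the non-vanishing condition maximally fail, since $u_\xi$ vanishes at every other point of $V_\CC$; the orthogonal rotation in the last step is what mixes the components enough for a single $\tilde\theta_{\xi_0}$ to be nonzero on all of $V_\CC$, while the orthogonality of $O$ preserves the sum-of-squares identity and its invertibility preserves the basis property.
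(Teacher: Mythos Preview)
Your proof is correct and takes a genuinely different route from the paper's. The paper builds the special nonvanishing element \emph{first}: it fixes $\xi_0\in V_\RR$, sets $v_{\xi_0}=u_{\xi_0}+\varepsilon\sum_{\xi\in V_\RR\setminus\{\xi_0\}}u_\xi+\sum_{\zeta\in V_\CC\setminus V_\RR}u_\zeta$ (visibly nonzero on all of $V_\CC$), takes $\theta_{\xi_0}=\sqrt{f(\xi_0)}\,v_{\xi_0}$, and then completes the decomposition explicitly, turning each complex block $(f(\zeta)-f(\xi_0))u_\zeta^2+(\overline{f(\zeta)}-f(\xi_0))\bar u_\zeta^2+2\lambda u_\zeta\bar u_\zeta$ into two real squares via a completion-of-squares identity, valid for $\varepsilon$ small and $\lambda$ large. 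You instead produce \emph{any} real SoS basis first (via the neat device $\alpha^2+\beta^2=f(\zeta)$ on each nonreal pair) and then use the $O(D)$-invariance of $\sum_\zeta\theta_\zeta^2$ together with a union-of-hyperplanes argument to rotate into a basis with a nowhere-vanishing member. Your argument is cleaner and avoids the paper's explicit identity; the paper's version is more constructive, with concrete parameters $\varepsilon,\lambda$, which is the sort of thing one would want for height bounds---though in fact in \Cref{sec:6} the authors drop the nonvanishing clause and use a simplified construction anyway, so that advantage is not actually exploited. One minor remark: since $u_\zeta\in\vspan{B}_\CC$ and your $\theta_\zeta$'s are conjugation-invariant $\CC$-linear combinations of the $u_\eta$, they already lie in $\vspan{B}_\RR$, so the final reduction modulo $I$ is unnecessary.
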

\begin{proof} The condition $\theta_{\xi_0}(\zeta)\ne 0$ for all $\zeta \in V_{\CC}$  in the statement requires a change in the proof of \cite[Proposition~2.2]{KMS2023}: we will take a suitable linear combination of the $u_i$'s to guarantee it.  We choose $\xi_{0}\in V_\RR$ and given  $\varepsilon\in \RR$  for $\xi\in V_\RR\setminus \{\xi_0 \}$, we define 
\begin{equation}\label{eq:theta0}
v_{\xi_0}= u_{\xi_{}} +\varepsilon \sum_{\xi\in V_{\RR},\xi\neq \xi_{0}}   u_{\xi} +\sum_{\zeta\in V_{\CC}\setminus V_\RR}  u_{\zeta} \ \in \ \vspan{ B}_{\RR} 
\end{equation}
which is a real polynomial since for $\zeta\in V_\CC\setminus V_\RR$, $u_{\overline\zeta}=\overline u_\zeta$. This polynomial satisfies $v_{\xi_0}(\xi_0)=1$, $v_{\xi_0}(\xi)=\varepsilon$ for all $\xi\in V_\RR$, $\xi\ne \xi_0$, and $v_{\xi_0}(\zeta)=1$ for all $\zeta\in V_\CC\setminus V_\RR$.  Set $Z$ to be a minimal set of complex non-real roots in $V_\CC$
such that $V_\CC\setminus V_\RR=Z\cup \overline Z$.
Then, given $\lambda_\zeta\in \RR$ for $\zeta \in V_\CC \setminus V_\RR$ we can verify that
\begin{align*} f &\equiv 
f(\xi_{0}) \,v_{\xi_{0}}^{2}
+\sum_{\xi \in V_\RR,\xi\neq \xi_{0}} (f(\xi)-\varepsilon^{2}f(\xi_{0}))\,  u_{\xi}^2\\
& \ \  + \sum_{\zeta\in Z} \big((f(\zeta)-f(\xi_{0}))\,  u_{\zeta}^2 + (f(\overline{\zeta})-f(\xi_{0}))\, \overline{ u}_{\zeta}^{2} + 2 \lambda\,  u_{\zeta}\, \overline{ u}_{\zeta}\big) \quad   
\mod I_{\RR}
\end{align*} 
since  the expression in the right-hand side is a real polynomial that coincides with $f$ on all $\zeta \in V_\CC$, and therefore, by the Nullstellensatz, their difference belongs to $\sqrt I=I$ by assumption.

We choose $\varepsilon\in \RR_{>0}$ small  enough such that  $f(\xi)-\varepsilon^{2}f(\xi_{0})>0$ for all $\xi\in V_\RR \setminus \{ \xi_0 \}$, and for $\zeta\in Z$, we will apply the following idendity to \begin{equation}\label{eq:nonreal} 
(f(\zeta)-f(\xi_{0}))\,  u_{\zeta}^2 + (f(\overline{\zeta})-f(\xi_{0}))\, \overline{ u}_{\zeta}^{2} + 2 \lambda\,  u_{\zeta}\, \overline{ u}_{\zeta}\end{equation} for a suitable $\lambda$:
Given  $a,b,\lambda \in \RR$ with $a+\lambda\ne 0$ and $u,v\in \RR[\bm x]$:
\begin{eqnarray}
\lefteqn{ (a + \ib b) (u+ \ib v)^2
+ (a - \ib b) (u - \ib v)^2 + 2\,\lambda |u+\ib v|^2 } \nonumber\\
&=& 2\,\big( (\lambda+a)\,u^2 - 2\,b\, u\,v + (\lambda-a)\, v^2 \big) \nonumber\\
&=& {2 (\lambda + a)} ( u - \frac{b}{ \lambda + a} \, v)^2 +  2\frac{\lambda^2-|a+\ib b|^2}{\lambda + a} v^2.\label{eq:identity}
\end{eqnarray}
We observe that when $\lambda>|a+\ib b|$, then $\lambda + a>0$ and $\lambda^2-|a+\ib b|^2>0$.
Identity~\eqref{eq:identity} thus shows that 
 when $\lambda>|f(\zeta)-f(\xi_0)|$, \eqref{eq:nonreal} is a sum of two positive-weighted squares of real polynomials.
 
Therefore, by taking  $\lambda\in \RR_{>0}$  such that $\lambda>|f(\zeta)-f(\xi_0)|$ for all $\zeta\in V_\CC\setminus V_\RR$, we obtain that 
\begin{eqnarray}\label{eq:nsos}
  f &\equiv& \sum_{\xi \in V_\CC}  \theta_\xi^2 
\quad \mod I_\RR
\end{eqnarray}
where, denoting respectively $\Re$ and $\Im$ the real and imaginary part:
\begin{itemize}
\item  $\theta_{\xi_{0}}=\sqrt{f(\xi_0}\,v_{\xi_0}\in \vspan{B}_{\RR}$ where $\widetilde u_{\xi_0}$ is defined in \eqref{eq:theta0};
\item  $\theta_\xi=  \sqrt{f(\xi)-\varepsilon^{2}f(\xi_{0})}\,u_\xi\ \in \vspan{B}_{\RR}$, \ for all $\xi\in V_\RR\setminus \{\xi_{0}\}$;
\item  
     $\theta_{\zeta}= \sqrt{2\,(\lambda + \Re(f(\zeta))-f(\xi_{0}))}\,\big(\Re( u_{\zeta}) - \dfrac{\Im(f(\zeta))}{ \lambda_\zeta +\Re(f(\zeta))-f(\xi_{0})}\, \Im( u_{\zeta})\big)\in \vspan{B}_{\RR}$, \  for all $ \zeta\in Z$;
  \item 
 $\theta_{\overline \zeta}=\sqrt{2\dfrac{{ \lambda_\zeta^2-|f(\zeta)-f(\xi_{0})|^{2}}}{\lambda_\zeta + \Re(f(\zeta))-f(\xi_{0})}}\, \Im( u_{\zeta}) \in \vspan{B}_{\RR}$, \ 
 for all $\zeta\in Z$.
 \end{itemize}
 Since the polynomials $ \{u_{\zeta}\,:\,\zeta\in V_\CC\}$ are linearly independent over $\CC$, the polynomials \[\{u_\xi,\Re(u_{\zeta}),\Im(u_\zeta)\,:\,\xi\in V_\RR,\zeta\in Z\}\] are also linearly independent over $\RR$, and therefore a basis of $\RR[\bm x]/I$ because of its cardinality. This implies that the polynomials $\{\theta_\zeta\,:\,\zeta\in V_\CC\}$, are a basis of $\rng/I$ and by construction  $\theta_{\xi_0}(\zeta)\ne 0$, $\forall \,\zeta\in V_\CC$.
This concludes the proof.
\end{proof}

We now consider the other case, when  $V_\RR=\emptyset$.
\begin{lemma} \label{lem:sos:empty}  Let $I \subset \rng$ be a zero-dimensional radical ideal such that  $V_{\RR}(I)= \emptyset$.
Let $B$ be a basis of $\rng\big /I$ and $f \in \rng$.  Then  there exist $\theta_{\zeta}\in \vspan{B}_\RR$,  $\zeta\in V_{\CC}$, which form a basis of $\rng/I$,  such that
\begin{equation*}
  f  \equiv  \sum_{\zeta \in V_{\CC}}  \theta_{\zeta}^2 
\quad \mod I_\RR .
\end{equation*} 
Moreover,  there exists  $\zeta_0\in V_\CC$ such that $\theta_{\zeta_0}(\zeta)\neq 0$ for all $\zeta \in V_{\CC}$.
\end{lemma}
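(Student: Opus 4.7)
My plan is to mimic the argument of Lemma~\ref{lem:sos:nempty} with the real anchor $\xi_0$ replaced by a complex conjugate pair of anchors $\zeta_0,\overline{\zeta_0}\in V_\CC$. Since $V_\RR=\emptyset$, write $V_\CC=Z\sqcup\overline Z$ with $Z$ a minimal set of non-conjugate representatives, fix $\zeta_0\in Z$ and $\varepsilon>0$, and introduce the real polynomial
$$
v_{\zeta_0}\ :=\ u_{\zeta_0}+u_{\overline{\zeta_0}}+\varepsilon\sum_{\zeta\in V_\CC\setminus\{\zeta_0,\overline{\zeta_0}\}} u_\zeta\ \in\ \vspan{B}_\RR,
$$
which takes the value $1$ on the anchor pair and $\varepsilon$ at every other root, so $v_{\zeta_0}$ does not vanish on $V_\CC$. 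Since $I$ is radical, orthogonality of the idempotents yields $v_{\zeta_0}^2\equiv u_{\zeta_0}^2+u_{\overline{\zeta_0}}^2+\varepsilon^2\sum_{\zeta\ne\zeta_0,\overline{\zeta_0}} u_\zeta^2 \mod I_\CC$.

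The key obstacle is the square count: a direct imitation of Lemma~\ref{lem:sos:nempty} would combine $v_{\zeta_0}^2$ with two squares from identity~\eqref{eq:identity} for every pair in $Z$ (including the anchor pair), yielding $|V_\CC|+1$ squares instead of the required $|V_\CC|=2|Z|$. I resolve this by handling the anchor pair jointly through $\alpha v_{\zeta_0}^2+\beta w_0^2$, with $w_0=s\Re(u_{\zeta_0})+t\Im(u_{\zeta_0})$, where real parameters $\alpha,\beta>0$, $s,t\in\RR$ with $t\ne 0$ are chosen to match the value $f(\zeta_0)$ (and hence $f(\overline{\zeta_0})$ by conjugation). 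Equating real and imaginary parts at $\zeta_0$ reduces to
$\alpha+\beta(s^2-t^2)/4=\Re f(\zeta_0)$ and $\beta s t=-2\Im f(\zeta_0)$, which admit positive solutions in either of two cases: if $\Im f(\zeta_0)\ne 0$, set $s=1$, $t=-2\Im f(\zeta_0)/\beta$ and take $\beta>0$ small enough that $\alpha=\Re f(\zeta_0)-\beta/4+\Im f(\zeta_0)^2/\beta>0$; if $\Im f(\zeta_0)=0$, set $s=0,t=1$ and take $\beta>\max(0,-4\Re f(\zeta_0))$, so that $\alpha=\Re f(\zeta_0)+\beta/4>0$.

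For each remaining pair $\zeta\in Z\setminus\{\zeta_0\}$, identity~\eqref{eq:identity} applied to $(f(\zeta)-\varepsilon^2\alpha)u_\zeta^2+(f(\overline\zeta)-\varepsilon^2\alpha)u_{\overline\zeta}^2+2\lambda_\zeta u_\zeta u_{\overline\zeta}$, with $\lambda_\zeta>|f(\zeta)-\varepsilon^2\alpha|$, produces two positive-weighted real squares. Assembling,
$$
f\equiv \alpha v_{\zeta_0}^2+\beta w_0^2+\sum_{\zeta\in Z\setminus\{\zeta_0\}}\bigl[\text{two squares from \eqref{eq:identity}}\bigr] \mod I_\RR,
$$
a sum of exactly $|V_\CC|$ squares in $\vspan{B}_\RR$; the congruence holds because both sides are real polynomials that agree at every $\zeta\in V_\CC$ by the parameter equations above, and because $\RR[\bm x]\cap I_\CC=I_\RR$ by radicality. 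Setting $\theta_{\zeta_0}:=\sqrt\alpha\,v_{\zeta_0}$ supplies the distinguished non-vanishing polynomial required by the statement. Linear independence of the resulting $|V_\CC|$ polynomials $\{\theta_\zeta\}_{\zeta\in V_\CC}$ in $\RR[\bm x]/I$ is verified by showing their $\RR$-span contains the basis $\{\Re(u_\zeta),\Im(u_\zeta):\zeta\in Z\}$: for $\zeta\ne\zeta_0$ the two pair squares span $\{\Re(u_\zeta),\Im(u_\zeta)\}$; the square $\sqrt\beta\,w_0$ with $t\ne 0$ then supplies $\Im(u_{\zeta_0})$ modulo already-known terms; and $\theta_{\zeta_0}\propto 2\Re(u_{\zeta_0})+2\varepsilon\sum_{\zeta\ne\zeta_0}\Re(u_\zeta)$ supplies $\Re(u_{\zeta_0})$.
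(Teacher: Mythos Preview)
Your argument is correct and the count, the non-vanishing of $\theta_{\zeta_0}=\sqrt{\alpha}\,v_{\zeta_0}$, and the linear-independence verification all go through as you describe. It does, however, differ from the paper's route in how the anchor pair $\{\zeta_0,\overline{\zeta_0}\}$ is handled. The paper takes a \emph{complex} modification $v_{\zeta_0}=u_{\zeta_0}+\varepsilon\,u_{\overline{\zeta_0}}+\sum_{\zeta\neq\zeta_0,\overline{\zeta_0}}u_\zeta$ (with asymmetric weights $1,\varepsilon$ on the anchor idempotents), together with $v_{\overline{\zeta_0}}=\overline{v_{\zeta_0}}$, and then applies identity~\eqref{eq:identity} to this pair just as to every other pair, after arranging the scalar in front of $v_{\zeta_0}v_{\overline{\zeta_0}}$ large enough; the resulting $\theta_{\zeta_0}$ is a real combination of $\Re(v_{\zeta_0})$ and $\Im(v_{\zeta_0})$ that one checks does not vanish on $V_\CC$. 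You instead keep the anchor polynomial real from the outset (symmetric weights $1,1$ on $u_{\zeta_0},u_{\overline{\zeta_0}}$, with $\varepsilon$ on all other idempotents) and compensate with an ad hoc second square $w_0=s\,\Re(u_{\zeta_0})+t\,\Im(u_{\zeta_0})$, matching $f(\zeta_0)$ by hand. Your approach avoids the somewhat delicate algebra of the paper's equation~\eqref{eq:equivalencef} and the auxiliary inequality on $\lambda_0$; the paper's approach has the virtue of uniformity, using identity~\eqref{eq:identity} for every conjugate pair without a special case. Both yield exactly $|V_\CC|$ real squares in $\vspan{B}_\RR$ with one of them nowhere-vanishing on $V_\CC$.
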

\begin{proof}
 We choose $\zeta_0\in  V_{\CC}$ and  for $\varepsilon\in \RR$ define
$$
{ v}_{\zeta_0} =  u_{\zeta_{0}} + \varepsilon u_{\overline \zeta_{0}} + \sum_{\substack{\zeta\in V_\CC\\\zeta \neq \zeta_{0},\zeta\ne \overline \zeta_0} } u_{\zeta} \quad  \mbox{ and } \quad {v}_{\overline \zeta_0} =\overline{ v}_{\zeta_0}
$$
which satisfy $ v_{\zeta_0}(\zeta_0)=1$, $ v_{\zeta_0}(\overline \zeta_0)=\varepsilon$ and $ v_{\zeta_0}(\zeta)=1$, $\forall \, \zeta\in V_\CC$ with $ \zeta\ne \zeta_0,\zeta\ne \overline \zeta_0$.
We note that the set $\{v_{\zeta_0},v_{\overline \zeta_0}, u_\zeta \,:\, \zeta \in V_\CC,\zeta\ne \zeta_0, \zeta\ne \overline\zeta_0\}$ is linearly independent over $\CC$ for $\varepsilon\ne 1$, and therefore a basis of $\CC[\bm x]/I$.

Let $Z\subset V_\CC$ be a minimal set such that $V_\CC=Z\cup \overline Z$, and let $f \in \rng$.
Given $\lambda_0,\lambda\in \RR$, we observe that  
\begin{align}\label{eq:equivalencef}
f &\equiv  \big(\frac{f(\zeta_{0})-\varepsilon^{2} f(\overline \zeta_{0})} {1- \varepsilon^{4}} -\varepsilon\lambda_{{0}} \big) \,{ v}_{\zeta_{0}}^{2}
+
\big(\frac{f(\overline \zeta_{0})- \varepsilon^{2} f(\zeta_{0})} {1 -\varepsilon^{4}}-\varepsilon\lambda_{{0}} \big)\,{ v}_{\overline \zeta_{0}}^{2}\\ \nonumber
  & \quad  + (1+\varepsilon^2)\lambda_{{0}} { v}_{\zeta_{0}}{ v}_{\overline \zeta_{0}} + \sum_{\zeta\in Z, \zeta\neq \zeta_{0}} \big(\rho_{\zeta} \, { u}_{\zeta}^2 + \overline{\rho}_{\zeta}\, {u}_{\overline \zeta}^{2} + 2 \lambda\, { u}_{\zeta}\,{ u}_{ \overline \zeta}\big)\quad \mod I_\RR
\end{align}
where $$\rho_\zeta:=f(\zeta)-\Big( \frac{f(\zeta_0)+f(\overline \zeta_0)}{1+\varepsilon^2}+(1-\varepsilon)^2\lambda_{0} \Big)=f(\zeta)-\Big( \frac{2\Re(f(\zeta_0))}{1+\varepsilon^2}+(1-\varepsilon)^2\lambda_{0} \Big)$$
since $f(\zeta)$ and the expression in the right-hand side coincide on all $\zeta \in V_\CC$.
In order to apply Identity~\eqref{eq:identity} to the first terms in the right-hand side of \Cref{eq:equivalencef}, we first choose  $\varepsilon,\lambda_{0} \in \RR_{>0}$ 
such that 
\begin{equation*}\label{eq:relation0} 
(1+\varepsilon^2)\lambda_{{0}}> 2\,\Big| \frac{f(\zeta_{0})- \varepsilon^{2} f(\overline \zeta_{0})} {1 -\varepsilon^{4}}-\varepsilon\lambda_{0}\Big|.
\end{equation*}
(we can first choose $\lambda_{0}>2 |f(\zeta_{0})|$ and then $\epsilon>0$ so that the above inequality, which is valid at $\epsilon=0$, still holds for $\epsilon>0$ small enough by continuity).
Identity~\eqref{eq:identity} then gives expressions  
\begin{equation*}
\theta_{{\zeta_0}}= \sqrt{2(\lambda +a)}\Big(\Re(v_{\zeta_0})-\dfrac{b}{\lambda + a}\Im(v_{\xi_0})\Big)\,  , \  \theta_{\overline\zeta_0}=\sqrt{2\dfrac{\lambda^2-|a+\ib b|^2}{\lambda + a} }\,\Im (v_{\zeta_0}) \quad \in \RR[\bm x].\end{equation*}
Now since  \begin{equation*} \Re(v_{\zeta_0})(\zeta_{0})=\Re(v_{\zeta_0})(\overline\zeta_0)=\dfrac{1+\varepsilon}{2}\ , \ 
  \Re(v_{\zeta_0})(\zeta)=\Re(v_{\zeta_0})(\overline\zeta)=1 \  \mbox{ for }  \zeta\in Z, \ \zeta\ne \zeta_0,
\end{equation*}
 and 
 \begin{equation*} \Im(v_{\zeta_0})(\zeta_{0})=\dfrac{(1-\varepsilon)\ib}{2},\ \Im(v_{\zeta_0})(\overline\zeta_0)=-\Im(v_{\zeta_0})(\zeta_{0}), \ 
  \Im(v_{\zeta_0})(\zeta)=\Im(v_{\zeta_0})(\overline\zeta)=0  \  \mbox{ for }  \zeta\in Z, \ \zeta\ne \zeta_0 ,
\end{equation*}
we have $\theta_{\zeta_0}(\zeta)\ne 0$  for all $\zeta\in Z$.

For the remaining $\zeta$, we  take  $\lambda \in \RR$ such that 
$ 
\lambda > |\rho_\zeta|
$, for all $\zeta\in V_\CC$, $\zeta\ne \zeta_0$.  
This allows to deduce from  Identity \eqref{eq:identity}  a decomposition of the form
$$ 
 f \equiv \sum_{\zeta\in Z} \big( \theta_{\zeta}^{2} +  \theta_{\overline \zeta}^{2}\big)
\quad \mod I,
$$
with  $\bm \theta_{\zeta_{0}}(\zeta)\neq 0$ for all $\zeta\in V_{\CC}$, which concludes the proof.
\end{proof}

With these two lemmas, which provide real representations, we can now prove Theorem~\ref{thm:radical}.

\begin{proof}[Proof of Theorem~\ref{thm:radical}]
 If $V_{\CC}= \emptyset$, then $I_\QQ=(1)$ and we have the decomposition  $f\equiv 0\ \mod (1)$. We therefore  assume that $V_{\CC}\neq\emptyset$.
 
From \Cref{lem:sos:nempty} and \Cref{lem:sos:empty},  we have that 
$$
    f\equiv \sum_{\zeta \in V_{\CC}}  \widetilde\theta_\zeta^2 \ \mod I_\RR \quad \mbox{with} \quad \sum_{\zeta \in V_{\CC}}  \widetilde\theta_\zeta^2=
 B(\widetilde \Theta \, \widetilde \Theta^t) B^t, $$ for  a basis 
$\{\widetilde\theta_\zeta, \zeta\in V_{\CC}\}=(\widetilde \theta_1,\dots,\widetilde \theta_D)$  in $\langle B\rangle_\RR$  of   $\RR[\bm x]/I$ where $ \widetilde\theta_D:=\widetilde\theta_{\zeta_{0}}$ satisfies $\widetilde\theta_D(\zeta)\ne 0$  for all $\zeta\in V_{\CC}$, and, by setting 
$B=(b_1,\dots,b_D)$,  $\widetilde \Theta\in \RR^{D\times D}$ is  the invertible matrix defined by  $(\widetilde \theta_1,\dots,\widetilde \theta_D)=B \,\widetilde \Theta$, which satisfies $\big((B\widetilde \Theta)(\zeta)\big)_D=\sum_{i=1}^D \widetilde\Theta_{iD}b_i(\zeta)\ne 0$ for all $\zeta\in V_\CC$. 

As $\QQ$ is dense in $\RR$, we can choose a close  invertible approximation  $ \Theta \, \in \, \QQ^{D \times D}$  of $\widetilde \Theta$, so that $\widetilde \Theta \, \Theta^{-1}=\Id+E\sim \Id$, i.e. $\widetilde \Theta=  \Theta (\mathrm{Id}+E)$, which still satisfies that 
 $\big((B  \Theta)(\zeta)\big)_D\neq 0$ for all $\zeta \in V_{\CC}$, and 
 $\widetilde Q:=(\Id+E)(\Id+E^t)\in S^D(\RR)$, close to $\Id$,  is still definite positive.
 
 Let now $\widehat Q\in S^D(\QQ)$ be a close enough positive definite rational  approximation of $\widetilde  Q$, so that  the projection $Q\in S^D(\QQ)$ of $\widehat Q$ on the linear variety 
 $$ 
\cl L_{f} = \{Y \in S^{D}(\RR) \,:\, f\equiv  B ( \Theta \,Y \,\Theta^t) B^t \ \mod I_\RR\},
$$
which satisfies $f\equiv B(\Theta\, Q \,\Theta^t) B^t\ \mod I$,
is still positive definite. 

Applying the square-root-free Cholesky decomposition (see \Cref{prop:cholesky}) to $Q$, we obtain $$Q=L\Delta L^t$$ where $L\in \QQ^{D\times D}$ is invertible and lower triangular and $\Delta$ is a diagonal matrix with rational positive entries. Moreover, we observe that  the last column of $\Theta \,L$ satisfies that  $( \Theta\,L)_{i,D}= L_{D,D} \Theta_{i,D}$ for $1\le i\le D$, and therefore,  $\big((B\, \Theta\, L)(\zeta)\big)_D = L_{D,D} \big((B\,\Theta )(\zeta)\big)_D\ne 0$ for all $\zeta\in V(\CC)$.

We conclude by observing that 
$$ B( \Theta \,Q \, \Theta^t) B^t=(B\,\Theta \,L) \Delta (B\, \Theta\, L)^t =\sum_{\zeta\in V_\CC} \omega_\zeta \theta_\zeta^2$$
with $\omega_\zeta\in \QQ_{>0}$ (the  coefficients of $\Delta$) and $\theta_\zeta=B\,\Theta \, L\in \langle B\rangle_\QQ$ which satisfies $\theta_{\zeta_0}(\zeta)= \big((B\, \Theta \,L)(\zeta)\big)_D \ne 0$, for all $\zeta \in V_\CC$. 
\end{proof}
 
\subsection{The case $S=V_\RR(I)$ for an  arbitrary zero-dimensional ideal $I$} \label{subs:arbitrary}

Here we consider the case where $I=(\bm h)\in \QQ[\bm x]$ is an arbitrary zero-dimensional ideal defining the roots $\zeta\in V_{\CC}$ with some multiplicities. We will extend Theorem~\ref{thm:radical} to this setting by Hensel lifting. We need the following elementary lemma.

\begin{lemma}\label{lem:invert} Let $J\in \QQ[\bm x]$ be a zero-dimensional  ideal and $\theta\in \QQ[\bm x]$ such that 
$\theta(\zeta)\neq 0$, $\forall\, \zeta\in V_{\CC}(J)$. Then $\theta$ has an inverse $\sigma$ modulo $J$ in $\QQ[\bm x]$. 
\end{lemma}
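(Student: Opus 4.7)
The plan is to reduce the problem to the weak Nullstellensatz. First I observe that the hypothesis $\theta(\zeta) \neq 0$ for all $\zeta \in V_\CC(J)$ says exactly that the complex variety of the ideal $J + (\theta) \subset \QQ[\bm x]$ is empty:
\[
V_\CC\bigl(J + (\theta)\bigr) = V_\CC(J) \cap V_\CC(\theta) = \emptyset.
\]
By the weak Hilbert Nullstellensatz applied over $\CC$, the ideal $(J + (\theta)) \cdot \CC[\bm x]$ generated by these elements in $\CCrng$ must be the whole ring, so $1$ lies in that ideal.

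Next I transfer this back to $\QQrng$. Since $\CC$ is faithfully flat over $\QQ$, for any ideal $\mathfrak a \subset \QQrng$ one has $(\mathfrak a \cdot \CCrng) \cap \QQrng = \mathfrak a$; concretely, if $1 = \sum c_i f_i$ with $c_i \in \CC$ and $f_i$ generators in $\QQrng$, then expanding the $c_i$ in a $\QQ$-basis of $\CC$ and identifying coefficients shows $1$ is already a $\QQ$-linear combination of the $f_i$. Applied to $\mathfrak a = J + (\theta)$, this gives $J + (\theta) = (1)$ in $\QQrng$, i.e.\ there exist $\sigma \in \QQrng$ and $p \in J$ with
\[
\sigma\,\theta + p = 1.
\]
Reducing mod $J$ yields $\sigma\,\theta \equiv 1 \pmod{J}$, so $\sigma$ is the desired inverse.

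I expect no serious obstacle: the only subtle point is the passage from $\CC$ back to $\QQ$, which one could equivalently justify by the alternative route of noting that $\QQrng/J$ is a finite-dimensional $\QQ$-vector space (since $J$ is zero-dimensional), so the multiplication-by-$\theta$ map is bijective as soon as it is injective, and injectivity follows from the same nonvanishing hypothesis together with $J_\CC \cap \QQrng = J$.
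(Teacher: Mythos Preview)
Your argument is correct and follows essentially the same route as the paper's proof: observe $V_\CC(J+(\theta))=\emptyset$, invoke the weak Nullstellensatz to get $1\in J+(\theta)$, and read off the inverse $\sigma$. The paper simply asserts $1\in J+(\theta)\subset \QQrng$ without spelling out the descent from $\CC$ to $\QQ$, whereas you make that step explicit via faithful flatness (and offer the finite-dimensional multiplication-map alternative); both justifications are fine.
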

\begin{proof}
Since $\theta(\zeta)\ne 0, \ \forall\, \zeta \in V_\CC$, $V_\CC(J+(\theta))=\emptyset$ and therefore, by the Nullstellensatz, $1\in J+(\theta)\subset \QQ[\bm x]$. Therefore, there exists
$\sigma\in \QQ[\bm x]$ 
such that $\theta\,\sigma\equiv 1 \ \mod J$.
\end{proof}

We now  lift a sum of squares representation modulo a radical ideal $J$ to its powers by Hensel lifting, applying Newton iterations for computing square-roots modulo an ideal (see e.g. \cite[Chap.~9]{vonzurGathen2013}, \cite[Lemma~2.10]{KMS2023}).

\begin{proposition}\label{prop:sqrtNewton} Let $J\subset \QQrng$ be a zero-dimensional   ideal. Let $\theta\in \QQrng$ and 
assume that there exists $\theta_{0}\in \QQrng$
with $\theta_0(\zeta)\neq 0$,  $\forall\, \zeta\in V_{\CC}(J)$, such that $\theta_{0}^{2} \equiv \theta \mod J$.
Then for all $k \in \NN$, there exists $\theta_{k}\in \QQrng$ such that
\begin{equation*}
\theta_{k}^{2} \equiv \theta \ \mod J^{2^{k}}.
\end{equation*}
\end{proposition}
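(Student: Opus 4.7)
The plan is to proceed by induction on $k$, using the classical Newton iteration for square roots, adapted to the setting of a zero-dimensional ideal. The base case $k=0$ is exactly the hypothesis $\theta_0^2 \equiv \theta \mod J = J^{2^0}$.

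For the inductive step, assume we have $\theta_k \in \QQrng$ with $\theta_k^2 \equiv \theta \mod J^{2^k}$. First I would verify that $\theta_k(\zeta) \neq 0$ for all $\zeta \in V_\CC(J)$. This will follow by a secondary induction showing $\theta_k \equiv \theta_0 \mod J$: the Newton update $\theta_{k+1} = \theta_k - (\theta_k^2 - \theta)\,\sigma_k/2$ (with $\sigma_k$ an inverse of $\theta_k$ modulo $J^{2^{k+1}}$) differs from $\theta_k$ by an element of $J^{2^k} \subset J$, so the value at any $\zeta \in V_\CC(J)$ is preserved. Since $V_\CC(J^{2^{k+1}}) = V_\CC(J)$, Lemma~\ref{lem:invert} then guarantees the existence of such an inverse $\sigma_k \in \QQrng$ of $2\theta_k$ modulo $J^{2^{k+1}}$ (using that $\mathrm{char}(\QQ)=0$ so that $2$ is invertible).

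Having defined $\theta_{k+1} := \theta_k - (\theta_k^2 - \theta)\,\sigma_k$, the key computation is to verify the precision doubles. Writing $\epsilon = \theta_k^2 - \theta \in J^{2^k}$ and $2\theta_k\sigma_k = 1 + \delta$ with $\delta \in J^{2^{k+1}}$, one gets
\begin{equation*}
\theta_{k+1}^2 \; = \; \theta_k^2 - \epsilon(2\theta_k\sigma_k) + \epsilon^2\sigma_k^2 \; = \; \theta - \epsilon\delta + \epsilon^2\sigma_k^2.
\end{equation*}
Since $\epsilon\delta \in J^{2^k} \cdot J^{2^{k+1}} \subset J^{2^{k+1}}$ and $\epsilon^2 \in J^{2^{k+1}}$, we conclude $\theta_{k+1}^2 \equiv \theta \mod J^{2^{k+1}}$, closing the induction.

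The only subtle point is ensuring the existence of the inverse $\sigma_k$ at each stage over $\QQ$, not just formally: this is precisely where Lemma~\ref{lem:invert} is used, and it is crucial that $V_\CC(J^{2^{k+1}}) = V_\CC(J)$ so that the non-vanishing hypothesis on $\theta_0$ propagates. The rest is the standard quadratic convergence of Newton's method in the ideal-adic setting, and no further difficulty arises.
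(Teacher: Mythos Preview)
Your proposal is correct and follows essentially the same approach as the paper: both prove the result by induction on $k$, invoking Lemma~\ref{lem:invert} (together with $V_\CC(J^{2^{k+1}})=V_\CC(J)$) to obtain an inverse of $\theta_k$ and then applying the Newton update to double the precision. The only cosmetic difference is that the paper writes the iterate as $\theta_{k+1}=\tfrac{1}{2}(\theta_k+\theta\sigma_k)$ with $\sigma_k$ an inverse of $\theta_k$, whereas you use the equivalent form $\theta_{k+1}=\theta_k-(\theta_k^2-\theta)\sigma_k$ with $\sigma_k$ an inverse of $2\theta_k$; your secondary induction to check $\theta_k(\zeta)\neq 0$ is fine, though the paper obtains this more directly from $\theta_k^2\equiv\theta\equiv\theta_0^2\bmod J$.
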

\begin{proof}
We prove the proposition by induction, applying Newton iterations to the equation $x^{2}-\theta=0$.
The property is true for $k=0$ by hypothesis.  
Assume that it is true for $k\in \NN$. There exists $\theta_{k}\in \QQrng$ such that $\theta_{k}^{2} \equiv \theta \mod J^{2^{k}}$ or equivalently $\theta_{k}^{2}-\theta  \in J^{2^{k}}$.
Then, since  $\theta_k^2\equiv \theta_0^2 \, \mod J$, one has that  for all $\zeta\in V_\CC$, $\theta_{k}^{2}(\zeta) \neq 0$, and by \Cref{lem:invert}, $\theta_{k}$ is invertible modulo the zero-dimensional ideal $J^{2^{k+1}}$.

Let $\sigma_{k}\in \QQrng$ be such that $\theta_{k} \sigma_{k} \equiv 1 \mod J^{2^{k+1}}$. We define $\theta_{k+1}= \frac 1 2 ( \theta_{k} + \theta \sigma_{k})$.
Then, for  $p_k:=\theta_{k}^{2}-\theta $, we have
\begin{align*}
  \theta_{k+1} -\theta_k & =  \frac 1 2 ( \theta_{k} + \theta \sigma_{k}) - \theta_{k}= \frac 1 2 ( - \theta_{k} + \theta \sigma_{k})\\
&\equiv \frac 1 2 \sigma_{k}\, (- \theta_{k}^{2} + \theta ) \quad  \mod J^{2^{k+1}} \\
& 
\equiv - \frac 1 2 \sigma_{k}\, p_{k} \quad \mod J^{2^{k+1}} .
\end{align*}
We deduce that
\begin{align*}
\theta_{k+1}^{2} & =   (\theta_{k} + \theta_{k+1}-\theta_{k})^{2} \  \equiv \ (\theta_{k} -\frac 1 2 \sigma_{k}\, p_{k})^{2} \quad \mod J^{2^{k+1}}\\
& \equiv  \ \theta_{k}^{2} - \theta_{k} \sigma_{k} p_{k} + \frac 1 4 \sigma_{k}^{2} p_{k}^{2} \quad\mod J^{2^{k+1}} \ \equiv  \  \theta_{k}^{2} - \theta_{k} \sigma_{k} p_{k} \quad\mod J^{2^{k+1}}\\
& \equiv \ p_k+\theta  - p_{k} \quad\mod J^{2^{k+1}} \    \ \equiv \ \theta \quad\mod J^{2^{k+1}},
\end{align*}
since $p_{k}^{2}\in (J^{2^{k}})^{2}=J^{2^{k+1}}$ and  $\theta_{k} \sigma_{k} \equiv 1 \mod J^{2^{k+1}}$.

This completes the proof by induction.
\end{proof}

\Cref{prop:sqrtNewton} allows us to extend Theorem~\ref{thm:radical} to the case when $I$ is an arbitrary zero-dimensional ideal, not necessarily radical.

\begin{theorem}\label{th:nonradical}
Let $I\subset \QQrng$ be a zero-dimensional  ideal and $f\in \QQrng$ be such that  $f(\xi)>0$ for all $\xi\in V_{\RR}$.
Let  $B$ be a monomial basis of $\QQrng/I$ with  $|B|=D$.  Then,   there exist   $\omega_k\in \QQ_{\ge 0}$  and $q_k\in \vspan{B}_{\QQ}$, $1\le k\le D$, such that  
\begin{equation*}
{f} \equiv \sum_{k=1}^D \omega_{k}\,q_{k}^{2}\quad   \mod I.
\end{equation*}
\end{theorem}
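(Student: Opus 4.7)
The plan is to reduce to the radical case already handled by Theorem \ref{thm:radical} and then lift the resulting sum of squares representation from $J := \sqrt{I}$ up to $I$ via the Newton iteration provided by Proposition \ref{prop:sqrtNewton}. The key observation is that the extra non-vanishing conclusion in Theorem \ref{thm:radical} was engineered precisely so that the Hensel-type lift is applicable.

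First, I would set $J = \sqrt{I}$. Since $V_{\CC}(J) = V_{\CC}(I) = V_{\CC}$ and $V_{\RR}(J) = V_{\RR}$ set-theoretically, the hypothesis $f > 0$ on $V_{\RR}$ is preserved and $J$ is a zero-dimensional radical ideal. Applying Theorem \ref{thm:radical} to $f$ with respect to $J$ (using some monomial basis $B_{J}$ of $\QQrng/J$), I obtain weights $\omega_{\zeta} \in \QQ_{>0}$ and polynomials $\theta_{\zeta} \in \QQrng$ indexed by $\zeta \in V_{\CC}$, together with a distinguished $\zeta_{0} \in V_{\CC}$ such that $\theta_{\zeta_{0}}(\zeta) \neq 0$ for every $\zeta \in V_{\CC}$, and
$$ f \equiv \sum_{\zeta \in V_{\CC}} \omega_{\zeta}\, \theta_{\zeta}^{2} \mod J. $$

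Next, I would isolate the $\zeta_{0}$-term and apply Newton iteration to it. Define
$$ \widetilde{f} := f - \sum_{\zeta \neq \zeta_{0}} \omega_{\zeta}\, \theta_{\zeta}^{2} \in \QQrng, \qquad \theta := \omega_{\zeta_{0}}^{-1}\, \widetilde{f}, $$
so that $\theta \equiv \theta_{\zeta_{0}}^{2} \mod J$. Because $\QQrng/I$ is Artinian and $J = \sqrt{I}$, there exists $N \in \NN$ with $J^{N} \subseteq I$; pick $k$ with $2^{k} \geq N$. Since $\theta_{\zeta_{0}}$ is non-vanishing on $V_{\CC}(J)$, Proposition \ref{prop:sqrtNewton}, applied with initial approximation $\theta_{\zeta_{0}}$, yields $\theta_{k} \in \QQrng$ such that $\theta_{k}^{2} \equiv \theta \mod J^{2^{k}}$, hence modulo $I$. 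Multiplying by $\omega_{\zeta_{0}}$ and moving the other terms back gives
$$ f \equiv \omega_{\zeta_{0}}\, \theta_{k}^{2} + \sum_{\zeta \neq \zeta_{0}} \omega_{\zeta}\, \theta_{\zeta}^{2} \mod I. $$

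Finally, to put the representation in the required form, I would replace each polynomial appearing in the squares by its normal form modulo $I$ with respect to the chosen basis $B$; this is legitimate because if $p \equiv \cl N(p) \mod I$ then $p^{2} \equiv \cl N(p)^{2} \mod I$. Setting $q_{1} := \cl N(\theta_{k})$ with weight $\omega_{1} := \omega_{\zeta_{0}}$ and enumerating the remaining $\cl N(\theta_{\zeta})$ with weights $\omega_{\zeta}$, then padding with zero summands if the number of terms is less than $D = |B|$, produces the announced decomposition with $\omega_{k} \in \QQ_{\geq 0}$ and $q_{k} \in \langle B \rangle_{\QQ}$.

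The main obstacle is ensuring that the Newton lifting hypothesis is actually satisfied, namely that we can find an \emph{explicit} approximate square root $\theta_{0}$ of the residue $\omega_{\zeta_{0}}^{-1}\widetilde{f}$ modulo $J$ that is invertible modulo every power $J^{2^{k+1}}$. This is exactly what the non-vanishing condition $\theta_{\zeta_{0}}(\zeta) \neq 0$ for all $\zeta \in V_{\CC}$ in Theorem \ref{thm:radical} buys us, in combination with Lemma \ref{lem:invert}; the whole strategy of isolating one term in the radical decomposition is motivated by the need to apply the lifting to a single polynomial rather than to an entire sum of squares.
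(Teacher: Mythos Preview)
Your proposal is correct and follows essentially the same route as the paper: reduce modulo $J=\sqrt{I}$, apply Theorem~\ref{thm:radical} to obtain an SoS decomposition with one distinguished square $\theta_{\zeta_0}$ that is nowhere vanishing on $V_\CC$, isolate that term, and Hensel-lift it to $I$ via Proposition~\ref{prop:sqrtNewton}, then reduce the resulting polynomials to $\langle B\rangle_\QQ$. Your explicit padding with zero summands to reach exactly $D$ terms is a small clarification that the paper glosses over (it writes the radical decomposition as a sum over $1\le k\le D$ even though $\#V_\CC(J)$ may be strictly smaller than $D$).
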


\begin{proof}
  Define $J:=\sqrt I$. Since $\QQrng/J\simeq (\QQrng/I)\big/(J/I)$,  we can assume that a monomial basis  of $\QQrng/J$ is included in the monomial basis $B$ of $\QQrng/I$. 
  Then, by \Cref{thm:radical}, since $\#(V_\CC(J))=D$, there exists
$\omega_k\in \QQ_{>0}$, $\theta_k\in \vspan{B}_{\QQ}$, $1\le k\le D$, where we can assume   $ \theta_{1}(\zeta)\neq 0$ for all $\zeta\in V_{\CC}$, such that
$$
{f} \equiv  \sum_{k=1}^{D} \omega_{k}\, {\theta}_{k}^{2} \quad \mod J.
$$ 
Let $$\theta:= \frac 1 {\omega_{1}}({f} - \sum_{k=2}^{D} \omega_{k} \, \theta_{k}^{2})\in \QQrng,$$ so that 
$ \theta_{1}^{2}\equiv \theta \ \mod J$ with $ \theta_{1}(\zeta)\neq 0$ for all $\zeta\in V_{\CC}$.
By \Cref{prop:sqrtNewton} applied to $k\in \NN$ such that $J^{2^{k}}\subset I$, there exists $q_{1}\in \QQrng$ such that $q_{1}^{2} \equiv \theta \mod I$. Moreover, by reducing $q_1$ modulo $I$ we can assume that $q_1\in \vspan{B}_{\QQ}$.
Setting $q_k:=\theta_k$ for $2\le k\le D$, we deduce that
$$
f \equiv \sum_{k=1}^{D} \omega_{k}\, {q}_{k}^{2} \quad \mod I
$$
with $q_k\in \vspan{B}_{\QQ}$ for $1\le k\le D$.
\end{proof}

\subsection{Extension to a basic closed semialgebraic set $S$}\label{subs:extension}
We show in this section how to extend Theorem~\ref{th:nonradical} to the case when $S$ is an arbitrary basic closed semialgebraic set defined as in \eqref{eq:s} by a set of polynomials $\bm g=\{g_1,\dots,g_r\}$ and $\bm h=\{h_1,\dots,h_s\}$ with the ideal $I=( \bm h) $  non-necessarily radical. 
More precisely we obtain the following result.

\begin{theorem}\label{thm:epss:finite} Let $\bm g,  \bm h  \subset  \QQrng $ be as in \Cref{eq:gh}, $I = (\vb h) \subset \QQrng$   be a {\em zero-dimensional} ideal,  $B$  be a monomial basis of  $\QQrng \big / I$ with $|B|=D$, and $S\subset \RR^n$ be the finite basic closed semialgebraic set defined as in \eqref{eq:s}.

If $f\in \QQrng$ is such that $f>0$ on $S$,
then  there exist $\omega_{i,k}\in \QQ_{\ge 0}$, $q_{i,k}\in \vspan{B}_\QQ$ and $p_j \in \QQrng$ for $0\le i\le r$, $1\le k\le D$ and $1\le j \le s$ such that  
\begin{equation*}
{f} = \sum_{k=1}^D \omega_{0,k}q_{0,k}^2 + \sum_{i=1}^r \left( \sum_{k=1}^D \omega_{i,k}q_{i,k}^2 \right) g_i + \sum_{j=1}^s p_j \, h_j    
     .
\end{equation*}
Furthermore, if $\vb h$ is a graded basis of $I$ and $\deg(B)$ is an upper bound for the maximum degree of a monomial in $B$, then for all $j$
$$\displaystyle \deg (p_j h_j)  \le \max\{ \deg(f), \deg(g_i)+2\deg(B) \,\colon \,1\le i\le r\}.$$
\end{theorem}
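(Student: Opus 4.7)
The plan is to reduce to Theorem~\ref{th:nonradical} by constructing rational weighted SoS perturbations $\widetilde\sigma_i \in \Sigma_\QQ^{2}$, with squared factors in $\vspan{B}_\QQ$, such that $f - \sum_{i=1}^{r} \widetilde\sigma_i\, g_i$ is strictly positive on \emph{all} of $V_\RR(I)$, not merely on the subset $S$. Once such $\widetilde\sigma_i$ are in hand, Theorem~\ref{th:nonradical} applied to the perturbed polynomial produces $\omega_{0,k}\in\QQ_{\ge 0}$ and $q_{0,k}\in\vspan{B}_\QQ$ representing it as an SoS modulo $I$, and rearranging yields the desired identity $f = \sum_k \omega_{0,k} q_{0,k}^2 + \sum_i \widetilde\sigma_i g_i + \sum_j p_j h_j$ with $p_j \in \QQrng$.

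First I would construct the perturbation over $\RR$. For each $\xi \in V_\RR \setminus S$, pick an index $i_\xi \in \{1,\dots,r\}$ with $g_{i_\xi}(\xi) < 0$ (which exists by definition of $S$), and let $u_\xi \in \vspan{B}_\RR$ be the polynomial characterised by $u_\xi(\xi)=1$ and $u_\xi(\eta)=0$ for $\eta \in V_\CC \setminus \{\xi\}$ (such a $u_\xi$ exists by surjectivity of the evaluation map $\vspan{B}_\CC \to \CC^{V_\CC}$, and is real since $\overline{u_\xi} = u_{\overline\xi} = u_\xi$). Setting $\sigma_i := \sum_{\xi \in V_\RR\setminus S,\, i_\xi = i} c_\xi u_\xi^{2}$, the evaluation of $f - \sum_i \sigma_i g_i$ at $\eta \in V_\RR \cap S$ equals $f(\eta)>0$, while at $\eta \in V_\RR \setminus S$ only the term $\xi = \eta$ contributes and gives $f(\eta) - c_\eta g_{i_\eta}(\eta)$, which becomes strictly positive once $c_\eta \in \RR_{>0}$ is chosen large enough.

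Next I would transfer this construction to $\QQ$. Since $\QQ$ is dense in $\RR$, I can pick $\widetilde c_\xi \in \QQ_{>0}$ and $\widetilde u_\xi \in \vspan{B}_\QQ$ approximating $c_\xi$ and $u_\xi$ in their monomial coefficients, and define $\widetilde\sigma_i := \sum \widetilde c_\xi \widetilde u_\xi^{2} \in \Sigma_\QQ^{2}$, which has degree at most $2\deg(B)$. Evaluation at each of the finitely many points of $V_\RR$ is a continuous linear functional, so for sufficiently good approximations the strict positivity of $f - \sum_i \widetilde\sigma_i g_i$ on $V_\RR$ is preserved (being an open condition). Applying Theorem~\ref{th:nonradical} to this polynomial and rearranging yields the SoS representation. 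For the degree bound when $\bm h$ is a graded basis, the remainder $\sum_j p_j h_j = f - \sum_k \omega_{0,k} q_{0,k}^2 - \sum_i \widetilde\sigma_i g_i$ lies in $I$ and admits a decomposition with $\deg(p_j h_j)$ bounded by the degree of the right-hand side, i.e.\ $\max\{\deg(f),\, 2\deg(B)+\deg(g_i) : 1\le i\le r\}$, since $\deg(q_{0,k})\le\deg(B)$ and $\deg(\widetilde\sigma_i)\le 2\deg(B)$.

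The main technical point is the rational transfer: one must verify that the perturbed $\widetilde u_\xi$, which no longer vanish exactly at the other real points of $V_\CC$, still yield the correct sign of $f - \sum_i \widetilde\sigma_i g_i$ at every point of $V_\RR$. This follows from the openness of strict positivity combined with the finiteness of $V_\RR$. I note that the non-radicality of $I$ causes no extra difficulty, since the $u_\xi$ intervene only through their values on $V_\CC$ (never as idempotents of the quotient ring); all algebraic lifting modulo $I$ has already been absorbed into the prior application of Theorem~\ref{th:nonradical}.
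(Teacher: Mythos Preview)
Your proof is correct and follows the same strategy as the paper: construct a real perturbation using interpolation polynomials $u_\xi$ at the points of $V_\RR\setminus S$, rationalize by density of $\QQ$ in $\RR$, then apply Theorem~\ref{th:nonradical} to the resulting polynomial. The paper packages the perturbation step as a separate lemma (Lemma~\ref{lem:perturb}) but the content is identical; one minor imprecision is that when $I$ is not radical the polynomial $u_\xi\in\vspan{B}_\CC$ is not uniquely determined by its values on $V_\CC$, so the realness of $u_\xi$ should be argued by taking the real part rather than by the identity $\overline{u_\xi}=u_{\overline\xi}$.
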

First, in the next lemma we proeed as in \cite[Algorithm~2]{Parrilo2002}, and reduce from the condition  $f(\xi)>0$ for all $\xi\in S$ (where $S$ is the closed basic semialgebraic set described in \eqref{eq:s}) to the condition  $\widetilde f(\xi)>0$ for all $\xi\in V_\RR(I)$, where $I=(\bm h)$ and $\widetilde f$ is an appropriate perturbation of $f$.

\begin{lemma}\label{lem:perturb} Let $S$ be a  basic closed semialgebraic set as in \eqref{eq:s}, and assume that $I = (\vb h)$ is zero-dimensional. Let 
 $f\in \QQrng$ be such that  $f(\xi)>0$   for all $\xi\in S$. Then there exist $\omega_{i,k}\in \QQ_{\ge 0}$  and $q_{i,k}\in \langle B \rangle$, $1\le i\le r, 1\le k\le D$,
 such that 
$$\varphi=\sum_{i=1}^r \big(\sum_{k=1}^{D} \omega_{i,k}q_{i,k}^2 \big)g_i$$  satisfies
$$(f-\varphi)(\xi)>0 \ \mbox{ for all } \   \xi\in V_{\RR}.$$
\end{lemma}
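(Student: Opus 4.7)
The plan is to reduce the construction to a direct application of \Cref{th:nonradical}. Concretely, I will find, for each $i=1,\dots,r$, a polynomial $\widetilde{\sigma}_i\in \QQrng$ that is strictly positive on the finite set $V_\RR$ and such that
$$
f(\xi) - \sum_{i=1}^r \widetilde{\sigma}_i(\xi)\, g_i(\xi) > 0 \quad \textup{for every } \xi \in V_\RR.
$$
Granted this, \Cref{th:nonradical} applied to each $\widetilde{\sigma}_i$ produces $\omega_{i,k}\in\QQ_{\ge 0}$ and $q_{i,k}\in \langle B \rangle_\QQ$, $1\le k\le D$, with $\widetilde{\sigma}_i \equiv \sum_{k=1}^D \omega_{i,k}\, q_{i,k}^2 \mod I$. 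Setting $\sigma_i := \sum_k \omega_{i,k}\, q_{i,k}^2$, we have $\sigma_i \equiv \widetilde{\sigma}_i \mod I$, so $\sigma_i$ and $\widetilde{\sigma}_i$ agree on $V_\RR$, and $\varphi := \sum_i \sigma_i g_i$ satisfies $(f-\varphi)(\xi)>0$ for every $\xi \in V_\RR$, as claimed.

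To build such $\widetilde{\sigma}_i$, I will enumerate $V_\RR = \{\xi_1,\dots,\xi_N\}$ and fix, for every $\xi_l \in V_\RR \setminus S$, an index $i(l)$ with $g_{i(l)}(\xi_l)<0$. I then prescribe real target values $a_{i,l}>0$ designed with a uniform positivity margin $\delta>0$: set $a_{i,l} := \epsilon$ for a small $\epsilon>0$, except when $\xi_l \notin S$ and $i=i(l)$, where $a_{i(l),l} := C_l$ is chosen large enough that $f(\xi_l) - C_l\, g_{i(l)}(\xi_l) - \epsilon \sum_{j \ne i(l)} g_j(\xi_l) > 2\delta$; for $\xi_l \in S$, where $f(\xi_l)>0$ and every $g_j(\xi_l)\ge 0$, the condition $f(\xi_l) - \epsilon \sum_j g_j(\xi_l) > 2\delta$ holds automatically for $\epsilon$ small. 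By multivariate Lagrange interpolation on the finite set $V_\RR$ of distinct points, there exist $p_i \in \RR[\bm x]$ with $p_i(\xi_l)=a_{i,l}$ for every $i$ and $l$.

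The main obstacle is that the coordinates of the $\xi_l$ are typically irrational, so the interpolants $p_i$ generally have real (not rational) coefficients. I will resolve this by a density argument: approximate the coefficients of each $p_i$ by rationals to obtain $\widetilde{\sigma}_i \in \QQrng$. Since evaluation at each fixed point $\xi_l$ is continuous in the coefficients of the polynomial, the approximation can be made so fine that $|\widetilde{\sigma}_i(\xi_l)-a_{i,l}|$ is a prescribed fraction of $\delta$ for every $i$ and $l$, and the positivity margins built into the targets are preserved by the triangle inequality. This yields $\widetilde{\sigma}_i \in \QQrng$ with the two required properties, and combined with the first paragraph completes the proof.
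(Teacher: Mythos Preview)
Your proof is correct but takes a different route from the paper. The paper constructs $\widetilde{\varphi}$ directly using the real idempotents: for each $\xi\in V_\RR\setminus S$ it picks $i_\xi$ with $g_{i_\xi}(\xi)<0$ and $\rho_\xi>0$ with $f(\xi)-\rho_\xi g_{i_\xi}(\xi)>0$, sets $\widetilde{\varphi}=\sum_{\xi\in V_\RR\setminus S}\rho_\xi\,u_\xi^2\,g_{i_\xi}$, and then approximates $\rho_\xi$ and $u_\xi$ by rationals. The squares $u_\xi^2$ are present from the outset, so no appeal to \Cref{th:nonradical} is needed. You instead interpolate strictly positive scalar targets to produce rational $\widetilde{\sigma}_i>0$ on $V_\RR$ and then invoke \Cref{th:nonradical} to turn each $\widetilde{\sigma}_i$ into an SoS modulo $I$; equality of $\sigma_i$ and $\widetilde{\sigma}_i$ on $V_\RR$ follows from $\sigma_i\equiv\widetilde{\sigma}_i\bmod I$. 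Your argument is a clean reduction to the already-proved case, while the paper's explicit idempotent construction is more self-contained and is exactly the form reused later in the height analysis of \Cref{sec:6} (see \Cref{lem:transf}), which is what that approach buys.
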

\begin{proof}
For every $\xi \in  V_{\RR}\setminus S$, there exists $i_{\xi}$ with $1\le i_\xi \le r$ such that $g_{i_{\xi}}(\xi)<0$. Let $\rho_{\xi}\in \RR_{+}$ be such that $f(\xi)-\rho_{\xi} g_{i_{\xi}}(\xi)>0$.
We define
$$ 
\widetilde{\varphi} := \sum_{\xi\in V_\RR\setminus S} \rho_{\xi}    u_{\xi}^{2} \, g_{i_{\xi}}  
$$
where $ u_{\xi}\in \vspan{ B}_{\RR}$ are the idempotents associated to $\xi \in V_\RR\setminus S$.
Then
\begin{itemize}
\item $(f-\widetilde{\varphi})(\xi)= f(\xi)>0$ for $\xi\in S$,
\item $(f-\widetilde{\varphi})(\xi)= f(\xi)- \rho_{\xi} g_{i_{\xi}}(\xi)>0$ for $\xi\in V_\RR\setminus S$.
\end{itemize}
Now, since $\QQ $ is dense in $\RR$ we can approximate $\rho_\xi$ by some rational value and $u_\xi$ by some rational polynomial of the same degree, for all $\xi\in V_\RR\setminus S$, to obtain a polynomial $\varphi\in \QQ[\bm x]$  as stated such that 
$(f-\varphi)(\xi)>0$ for all $\xi\in V_\RR$ still hold.
\end{proof}

\begin{proof}[Proof of Theorem~\ref{thm:epss:finite}]
Let $\varphi$ be the polynomial defined in \Cref{lem:perturb} and let
 $\widetilde{f} = f - \varphi$.
As $\widetilde{f}(\xi)>0$ for all $\xi\in V_{\RR}$, by \Cref{th:nonradical} there exists
$\omega_{0,k}\in \QQ_{\ge 0}$, ${q}_{0,k}\in \langle B\rangle_\QQ $, $1\le k\le D$,  such that
$$
\widetilde{f} \equiv  \sum_{k=1}^{D} \omega_{0,k}\, q_{0,k}^{2} \quad \mod I.
$$ 
Thus  $f\equiv  \sum_{k=1}^{D} \omega_{0,k}\, q_{0,k}^{2}+\varphi \ \mod I $, i.e.  $f- \sum_{k=1}^{D} \omega_{0,k}\, q_{0,k}^{2}-\varphi \ \in \ I $, and since 
$$\deg( f -  \sum_{k=1}^{D} \omega_{0,k}\, q_{0,k}^{2}-\varphi)\le  \max\big\{\deg(f), 2\deg(B)+\max_i\{\deg(g_i)\}\big\}, $$
 when $\bm h$ is a graded basis of $I$, we can take $$f -  \sum_{k=1}^{D} \omega_{0,k}\, q_{0,k}^{2}-\varphi = \sum_{j=1}^s p_j h_j$$ with \[\deg (p_j h_j) \le \max\{\deg(f), 2\deg(B)+\max_i\{\deg(g_i)\}\}.\qedhere\]
\end{proof}

\section{SoS representation for $f$ nonnegative  on $S$} 
\label{sec:fnonnegative}
Here we consider Problem 2 stated in \Cref{sec:intro}, whose answer is summarized in \Cref{thm:A}, when $f$ is nonnegative on $S$ and $(I:f)+(f)=(1)$. 

We first observe that if $\kk$ is a subfield of $\RR$ and   $f\in \kkrng$ is a polynomial such that $f(\xi)\ge 0$ for all $\xi\in S$, it might happen that  
$f$ does not belong to the associate quadratic module $Q_{\kk}$, as shown by the following example (see also \cite[Remark 1]{Parrilo2002}).

\begin{example}\label{exp:x3} Let $I=(x^{2})\subset \kk[x]$ and $f=x$. We have $f(\xi)\ge 0$ for $\xi\in S=\{0\}$. But $f \not\in Q_{\kk} = Q_{\kk}(\pm \, x^2)$ (see definition \eqref{eq:quadratic module}). Otherwise, there would exists $\sigma_{0}\in \Sigma^{2}_\kk$ and $p\in \kkrng$ such that
\begin{equation*}\label{eq:dec1}
  x = \sigma_{0} + p\cdot x^{2} .
\end{equation*}
This implies that  $\sigma_{0}$, which is a sum of squares of polynomials which  can't have a  constant term,  is   divisible by $x^{2}$.
This contradicts the possible SoS decomposition  and shows that $x \not\in  Q_{\kk}$. We observe that the same argument can be used to show that
$x\not\in  Q_{\kk}(\pm x^k)$ for $k\ge 2$.
\end{example}

However we can give a positive answer under a condition of separability of the zeros of $f$, which is discussed in detail in \Cref{sec:2}.

\begin{theorem}\label{thm:nonneg} Let $\bm g,  \bm h  \subset  \QQrng $ be as in \eqref{eq:gh}, $I = (\vb h) \subset \QQrng$   be a {\em zero-dimensional} ideal,  $B$  be a  monomial basis of  $\QQrng \big / I$ with $|B|=D$, and $S\subset \RR^n$ be the finite basic closed semialgebraic set defined in \eqref{eq:s}.

If $f\in \QQrng$ is such that  $f\ge 0$ on $S$ and $(I:f) + (f) = (1)$,
then  there exist $\omega_{i,k}\in \QQ_{\ge 0}$, $q_{i,k}\in \vspan{B}_\QQ$ and $p_j \in \QQrng$ for $0\le i\le r$, $1\le k\le D$ and $1\le j \le s$ such that  
\begin{equation*}
{f} = \sum_{k=1}^D \omega_{0,k}q_{0,k}^2 + \sum_{i=1}^r \left( \sum_{k=1}^D \omega_{i,k}q_{i,k}^2 \right) g_i + \sum_{j=1}^s p_j \, h_j    
      .
\end{equation*}
Furthermore, if $\vb h$ is a graded basis of $I$ and $\deg(B)$ is an upper bound for the maximum degree of a monomial in $B$, then for all $j$
$$\displaystyle \deg (p_j h_j)  \le \max\{ \deg(f), \deg(g_i)+2\deg(B) \,\colon \,1\le i\le r\}.$$
\end{theorem}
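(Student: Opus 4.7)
The strategy is to reduce the nonnegative case to the strictly positive case (Theorem~\ref{thm:epss:finite}) by constructing an auxiliary polynomial $\tilde f$ that is strictly positive on $S$ and from which an SoS representation of $f$ can be recovered. The key tool is the idempotent structure modulo $I$ provided by the hypothesis $(I\colon f)+(f)=(1)$: choose $a\in(I\colon f)$ and $b\in\QQrng$ with $a+bf=1$, so $af\in I$; then $a^2 = a(1-bf) = a - abf \equiv a \pmod I$, so $a$ is idempotent modulo $I$, and consequently $(1-a)^2\equiv 1-a \pmod I$ and $a(1-a)\in I$.

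I would next define
$$\tilde f := f\,(1-a)^2 + a^2,$$
and verify that $\tilde f>0$ on $S$: if $\xi\in S$ with $f(\xi)>0$, then $(af)(\xi)=0$ together with $f(\xi)\neq 0$ forces $a(\xi)=0$, hence $\tilde f(\xi) = f(\xi) > 0$; if $f(\xi) = 0$, then $a(\xi) + b(\xi)f(\xi)=1$ gives $a(\xi)=1$, hence $\tilde f(\xi) = 1 > 0$. Apply Theorem~\ref{thm:epss:finite} to $\tilde f$ to obtain weighted SoS $\sigma_0 = \sum_k\omega_{0,k}q_{0,k}^2$ and $\sigma_i = \sum_k\omega_{i,k}q_{i,k}^2$ with $q_{i,k}\in\vspan{B}_\QQ$, together with polynomials $p_j\in\QQrng$ satisfying $\tilde f = \sigma_0 + \sum_i \sigma_i g_i + \sum_j p_j h_j$.

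Multiplying this identity by $1-a$, the left-hand side becomes $(1-a)\tilde f = f(1-a)^3 + a^2(1-a)$; using the idempotent identities, $(1-a)^3\equiv 1-a\pmod I$, $f(1-a)\equiv f\pmod I$, and $a^2(1-a)\equiv 0\pmod I$, so that $(1-a)\tilde f \equiv f\pmod I$. On the right-hand side, $(1-a)p_j h_j\in I$, and $(1-a)\sigma\equiv (1-a)^2\sigma\pmod I$ for each $\sigma=\sigma_0,\sigma_i$, since the difference $a(1-a)\sigma$ lies in $I$. Collecting everything produces
$$f = (1-a)^2\sigma_0 + \sum_{i=1}^r (1-a)^2\sigma_i\, g_i + \sum_{j=1}^s \tilde p_j\, h_j,$$
where $(1-a)^2\sigma_0 = \sum_k\omega_{0,k}((1-a)q_{0,k})^2$ and $(1-a)^2\sigma_i = \sum_k\omega_{i,k}((1-a)q_{i,k})^2$ are again weighted SoS. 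To enforce $q_{i,k}\in\vspan{B}_\QQ$ in the final statement, reduce each $(1-a)q_{i,k}$ modulo $I$ to its normal form $\widetilde q_{i,k}\in\vspan{B}_\QQ$; the corresponding squares differ by elements of $I$, absorbed into the $h_j$-terms. Under the graded basis hypothesis, $\deg(\widetilde q_{i,k})\leq\deg(B)$ bounds the SoS part by degree $\max_i\{2\deg(B)+\deg(g_i)\}$, and the remaining $h_j$-combination equals $f$ minus this SoS part, so the graded basis property gives the stated bound on $\deg(p_j h_j)$.

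The main obstacle is ensuring that an SoS representation of $\tilde f$ converts to one for $f$ without introducing spurious negative squares. A naive perturbation such as $f+a^2$ would force us to subtract $a^2$ at the end, and $-a^2$ is not a weighted SoS modulo $I$. The trick of multiplying by $1-a$ succeeds precisely because $1-a$ is idempotent modulo $I$: $(1-a)\cdot(\text{weighted SoS})$ equals $(1-a)^2\cdot(\text{weighted SoS})$ modulo $I$, which is still a weighted SoS, preserving membership in $Q_\QQ$.
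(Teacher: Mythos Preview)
Your argument is correct and complete, including the degree bound. The idempotency of $a$ modulo $I$ is the right observation, and the multiplication by $1-a$ cleanly transfers the SoS representation of $\tilde f$ back to $f$.

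The paper takes a different but closely related route. In the paper's notation (their $b$ is your $a$), they perturb the coefficient of $f$ in the B\'ezout identity to a polynomial $\overline a$ that is strictly positive on $S$, apply Theorem~\ref{thm:epss:finite} to $\overline a$, and then use the identity $f\equiv \overline a\,f^2\pmod I$ to multiply the resulting SoS decomposition of $\overline a$ by $f^2$. Thus the paper applies the positive case to (a perturbation of) ``$1/f$'' and multiplies by $f^2$, whereas you apply it to a perturbation $\tilde f$ of $f$ itself and multiply by the idempotent $(1-a)^2$. Your version is arguably cleaner in that it avoids the extra perturbation step (the paper's choice of $\rho$ to force $\overline a>0$ on $S^*$), since the idempotent automatically takes the values $0$ and $1$ on $S$. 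On the other hand, the paper's route has the structural advantage that the squares $q_{i,k}$ naturally factor as $f\cdot(\text{something in }\vspan{B}_{\QQ})$; this is exploited later in the height analysis of Section~\ref{sec:7} and is reflected in the statement of Theorem~\ref{thm:Thethm2}, where one obtains $q_{i,k}\in f\cdot\vspan{B}_{\ZZ}$. Your construction does not produce this factorisation directly.
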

\begin{proof} 
Since $(I:f) + (f) = (1)$, we can find $a\in \QQrng$ and $b\in (I:f)$ such that $b+a\,f=1$.

We reduce $a$ and $b$ modulo $I$ and obtain $\widetilde a, \widetilde b\in \vspan{ B}$ that satisfy $\widetilde b\in(I:f)$
and $\widetilde b + \widetilde a\,f \equiv 1 \ \mod I$.
Set $S^{*}:=\{\xi \in S\,:\,f(\xi)=0\}\subset   V_{\RR}(f)$, 
let $\rho\in \QQ_+$ be such that $\rho + \widetilde a (\xi)>0$ for all $\xi\in S^*$, and 
define $\overline a:=\widetilde a +\rho \,\widetilde b\in \QQ[\bm x]$. 
 Then, since $\widetilde b \,f\in I$ one has 
\begin{equation}\label{eq:b}\widetilde b+\overline a \, f \equiv 1 \ \mod I.\end{equation}
We observe that  $\overline a (\xi)>0$ for all $\xi \in S$. Indeed, for $\xi\in S^*$  Identity~\eqref{eq:b} implies that $\widetilde b(\xi)=1$ when $f(\xi)=0$, and then  $$\overline a(\xi)=\widetilde a(\xi)+\rho\,\widetilde b(\xi)=\widetilde a(\xi)+\rho>0,$$ while for $\xi\in S\setminus S^*$, $\widetilde b\, f\in I$ implies that $\widetilde b(\xi)=0$ since $f(\xi)\ne 0$, and then  $$\overline a (\xi)=1/f(\xi)>0.$$ 
We deduce from \Cref{thm:epss:finite} that $$\overline a \equiv \sum_{k=1}^D \omega_{0,k}\, \overline q_{0,k}^{2}+ \sum_{i=1}^r \big(\sum_{k=1}^{D} \omega_{i,k}\overline q_{i,k}^2 \big)g_i\quad   \mod I$$ for some $\omega_{i,k}\in \QQ_{+}$ and $\overline q_{i,k}\in \QQ[\bm x]$. Moreover, Identity \eqref{eq:b} and $\widetilde b \, f\in I$ imply that 
$$f\equiv \overline a\, f^2 = \sum_{k=1}^D \omega_{0,k}\, (\overline q_{0,k}f)^{2}+ \sum_{i=1}^r \big(\sum_{k=1}^{D} \omega_{i,k}(\overline q_{i,k} f)^2 \big)g_i\ \mod I.$$  Now let $\overline q_{i,k}f\equiv q_{i,k} \ \mod I$ with $q_{i,k}\in \langle B\rangle $.  Then
$$f\equiv \sum_{k=1}^D \omega_{0,k}\, q_{0,k}^{2}+ \sum_{i=1}^r \big(\sum_{k=1}^{D} \omega_{i,k}q_{i,k}^2 \big)g_i\ \mod I$$
with  $\deg(q_{i,k})\le \delta$.

Finally, when $\bm h$ is a graded basis of $I$ we deduce the degree bounds as in the proof of \Cref{thm:epss:finite}.
\end{proof}

In \cite{KMS2023} the authors showed that for the univariate case, the condition $h/\gcd(f,h)$ and $f$  being relatively prime is a sufficient condition in order to guarantee that if $f$ is a polynomial such that $f(\xi)\ge 0$ for all $\xi\in V_\RR(I)$, then $f\in Q_\QQ$ (here $I=(h)$). 
The condition $(I:f)+(f)=(1)$ of \Cref{thm:nonneg} is equivalent to that condition in the univariate case.  

\begin{remark}\label{rem:iff_nonnegative_radical}
A direct corollary of this theorem is the characterisation of positive polynomials on $S$, given in \Cref{thm:B}. Indeed if $I$ is radical, we have $(I:f)+(f)=(1)$ since $V_{\CC}(I:f) \cap  V_{\CC}(f)= \emptyset$, and by \Cref{thm:nonneg} any polynomial nonnegative on $S$ has the desired SoS representation.
\end{remark}

If $I$ is not radical the result may not hold, as previously shown in \Cref{exp:x3}.

\section{Height bounds  in the quotient algebra $\QQrng/I$ for $I$ radical}\label{sec:heightradical}

Sections \ref{sec:6} and \ref{sec:7} below deal with the proof of \eqref{thm:C}, where we compute height bounds for all the expressions appearing in the SoS representation \Cref{eq:main_result} presented in \Cref{thm:A} in the case when $K=\QQ$ and $I$ is a zero-dimensional {\em radical} ideal in $\QQrng$. Without loss of generality, we assume in the sequel that $f$, $\bm g=\{g_1,\dots,g_r\}$ and $\bm h=\{h_1,\dots,h_s\}$ have all {\em integer} coefficients.

As a preparation for the proof of \Cref{thm:C}, in this section we derive some general bounds for the heights of polynomials in the quotient algebra $\QQrng/I$ when $I$ is a zero-dimensional radical ideal. We add the following notations to the ones already introduced in \Cref{sec:preliminaries}.
\begin{itemize}
\item For $x\in \CC$, we denote $\h(x):= \log (|x|)$ with $\h(0)= -\infty$, where $\log$ denotes logarithm in base 2 (which coincides with the usual (logarithmic) {\em height} of $x$,  essentially its number of binary digits, when  $x\in \ZZ$).
\item Given $p=\sum_\alpha a_\alpha\bm x^\alpha \in \CCrng$ or $\bm A=(A_{i,j})_{i,j}\in \CC^{N\times M}$, we denote
$$
\h(p):=\max_\alpha\{\h(a_\alpha)\} \quad \mbox{and}  \quad \h(\bm A):=\max_{i,j}\{\h(A_{i,j})\}.
$$
Note that if $p\in \ZZrng$,
$\h(p)$ coincides with 
the  {(logarithmic) height} $\hh(p)$ of the polynomial $p$, and the same holds for a matrix $\bm A\in \ZZrng$. 
However, when $p\in \QQrng$ (or $\bm A\in \QQ^{N\times M}$), these two quantities differ, as $\hh(p)=\max\{\hh(\widehat p),\hh(\nu)\}$, where $p=\widehat p/\nu$ with  $\widehat p\in \ZZrng$ a primitive polynomial associated to $p$ and $\nu\in \NN$ (analogously for $\bm A$). To avoid misunderstandings, we will not use in the sequel the  notation $\h$ for a rational non-integer polynomial or matrix, but instead refer to $\h$ of integer numerators and a common denominator. 
\item Finally, given $n,d\in \NN$ we denote $$\Cn := c\,n\log(n+1)\quad \mbox{and}\quad \Cnd := c\,n\log(d(n+1))$$ for some  suitable positive computable constant $c\in \RR_{>0}$ (in particular, $\C1=c$).
\end{itemize}

\subsection{Bounds on the values of a polynomial at the roots}
The following lemma applies an arithmetic B\'ezout theorem for the height of a variety in terms of the heights of its defining polynomials that is developed  in \cite{KPS01}.

\begin{lemma}\label{lem:htcoord} Let  $J=(h_1, \dots, h_{s+1})$ be a zero-dimensional ideal defined by polynomials $h_1,\dots, h_{s+1}\in \ZZrng$  with  $\deg(h_j)\le d$ for $1\le j\le s$, $e=\max\{d,\deg(h_{s+1})\}$ and $\h(h_j)\le \tau$ for $1\le j\le s+1$. Then
\begin{equation*}\sum_{\zeta\in V_\CC(J)}\h(\|(1,\zeta)\|_2)\le   \Cn d^{n-2}e(d+\tau). \end{equation*}
\end{lemma}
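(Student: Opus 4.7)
The plan is to bound the sum by identifying it, up to a controlled additive error, with the normalized projective height of a $0$-dimensional complete intersection containing $V_\CC(J)$, and then to bound that height via an arithmetic Bézout theorem from [KPS01].

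Assume $V_\CC(J)\neq\emptyset$ (otherwise the bound is trivial). Then zero-dimensionality of $J$ forces $s+1\ge n$. First I would reduce to a complete intersection: replace $h_1,\dots,h_s$ by $n-1$ generic $\ZZ$-linear combinations $\tilde h_1,\dots,\tilde h_{n-1}$ of degree $\le d$ and height $\le\tau+O(\log(sn))$, and set $\tilde h_n:=h_{s+1}$ of degree $\le e$ and height $\le\tau$. Since every such combination vanishes on $V_\CC(J)$, the zero-dimensional scheme $W:=V_\CC(\tilde h_1,\dots,\tilde h_n)\subset\CC^n$ contains $V_\CC(J)$; all summands being nonnegative, it is enough to bound $\sum_{\zeta\in W}\h(\|(1,\zeta)\|_2)$, counted with multiplicities.

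Viewing $W\subset\PP^n$ via $\zeta\mapsto(1:\zeta)$, this sum equals the Archimedean Mahler measure of the Chow form of $W$, hence differs from the normalized projective height $\hat h(W)$ used in [KPS01] only by an additive term $O(\deg W)=O(d^{n-1}e)$. Applying the arithmetic Bézout theorem of [KPS01] iteratively to the hypersurfaces $\{\tilde h_i=0\}$ starting from $\PP^n$ yields
\begin{equation*}
  \hat h(W)\ \le\ \sum_{i=1}^n\Bigl(\prod_{j\ne i}\deg\tilde h_j\Bigr)\bigl(\h(\tilde h_i)+c_n\deg\tilde h_i\bigr).
\end{equation*}
With $n-1$ factors of degree $d$, one of degree $e\ge d$, and heights $\le C\tau$, every summand with $i\le n-1$ is at most $d^{n-2}e\,(C\tau+c_n d)\le c'_n\,d^{n-2}e\,(d+\tau)$, while the summand with $i=n$ is at most $d^{n-1}(\tau+c_n e)$, which splits as $d^{n-1}\tau\le d^{n-2}e\,\tau$ (using $d\le e$) plus $c_n\,d^{n-1}e=c_n d\cdot d^{n-2}e\le c_n(d+\tau)\,d^{n-2}e$. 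Summing the $n$ contributions and absorbing the $O(d^{n-1}e)$ Mahler correction produces the announced bound $\Cn d^{n-2}e(d+\tau)$.

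The main obstacle is the bookkeeping in the passage from $\sum_{\zeta\in W}\h(\|(1,\zeta)\|_2)$ to $\hat h(W)$, because the various normalizations of projective height found in the arithmetic-geometry literature differ from one another by additive terms of size $O(\deg W)$, and one has to work throughout in the convention of [KPS01]. A secondary, more technical point is to control the height overhead introduced by the generic $\ZZ$-linear combinations used in Step 1 so that it fits into the $\log(n+1)$ appearing in $\Cn=c\,n\log(n+1)$; this is achieved by taking coefficients bounded polynomially in $n$. Once these two normalizations are pinned down, the numerical bound follows directly from arithmetic Bézout.
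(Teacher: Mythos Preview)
Your approach is correct but more hands-on than the paper's. The paper invokes \cite{KPS01}, Cor.~2.11 (and the comment following it) directly: that result bounds the global height $\hh(V_\CC(J))$ in terms of the degrees and heights of \emph{any} defining equations, with no complete-intersection hypothesis, and then the paper simply uses that the Archimedean local height $\hh_\infty(V_\CC(J))=\sum_\zeta\h(\|(1,\zeta)\|_2)$ is dominated by the global one. You instead unpack this black box by first reducing to a complete intersection $W\supset V_\CC(J)$ and then iterating arithmetic B\'ezout over the $n$ hypersurfaces, which is precisely the mechanism behind the cited corollary. One step you treat as routine is in fact the most delicate: that $W=V_\CC(\tilde h_1,\dots,\tilde h_{n-1},h_{s+1})$ is zero-dimensional for a generic choice of integer coefficients is true but not automatic; it needs an incidence-variety (Bertini-type) count, resting on the observation that zero-dimensionality of $J$ forces $\dim V_\CC(h_1,\dots,h_s)\le1$ so that the single extra equation $h_{s+1}$ can finish the job. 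This is at least as subtle as the height-normalization bookkeeping you flag as the main obstacle, and the paper's direct citation sidesteps it entirely.
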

\begin{proof} The arithmetic B\'ezout theorem in \cite[Cor.2.11]{KPS01}, more precisely the comment after it applied to the polynomials $h_1,\dots,h_s\in \QQrng$ which define the variety $V$, implies that the (logarithmic) global 
height $\hh(V)$ of $V$ satisfies
$$
\hh(V_\CC(J))\le 2n\log(n+1)d^{n-1}e+nd^{n-2}e\tau \le \Cn d^{n-2}e(d+\tau).
$$
 Now, the global height $\hh(V)$ is defined as the sum of its local heights, see  
\cite[Sec.1.2.4]{KPS01}, which implies that in particular the local height $\hh_\infty$ corresponding to the ordinary complex absolute value, satisfies 
$$
\sum_{\zeta\in V_\CC(J)}\hh_\infty (\zeta)=\hh_\infty(V_\CC(J))\le\hh(V_\CC(J)),
$$
while by  \cite[Sec.1.2.3]{KPS01}, $\hh_\infty(\zeta)=\h(\|(1,\zeta)\|_2)$.
\end{proof}

We can now bound the value of a polynomial at the roots $\zeta\in V_{\CC}$.
\begin{lemma}\label{lem:htpzeta} Let $I=(\bm h)$  be a zero-dimensional ideal defined by polynomials $h_1,\dots, h_s\in \ZZrng$ with  $\deg(h_j)\le d$ and $\h(h_j)\le \tau$. Let $p\in \ZZ[\bm x]$ be such that $\h(p)\le \tau$ and let $d_p= \max\{d, \deg(p)\}$.
 Then, for any $Z\subset V_\CC$ such that $p(\zeta)\ne 0$, $\forall\, \zeta\in Z$, we have
$$
- \Cn d^{n-1}d_p (d+\tau) \le \sum_{\zeta\in Z}\h(p(\zeta)) \le \Cn d^{n-1}d_p (d+\tau)
$$
\end{lemma}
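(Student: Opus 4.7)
The plan is to prove the two-sided bound separately: the upper bound follows from an elementary triangle-inequality estimate combined with Lemma~\ref{lem:htcoord}, while the lower bound requires a Rabinowitch-style trick to convert lower bounds on $|p(\zeta)|$ into the same type of statement about heights of coordinates of an auxiliary variety.

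For the upper bound, I would apply the triangle inequality to write
\[
|p(\zeta)| \;\le\; 2^{\tau}\,\binom{n+d_p}{n}\,\|(1,\zeta)\|_2^{\,d_p},
\]
and take $\log$ to get $\h(p(\zeta)) \le \tau + \log\binom{n+d_p}{n} + d_p\,\h(\|(1,\zeta)\|_2)$. Summing over $\zeta\in Z$ and using the B\'ezout bound $|Z|\le |V_\CC|\le d^n$, Lemma~\ref{lem:htcoord} (applied with one of the $h_j$ repeated as $h_{s+1}$, so that $e=d$) yields $\sum_{\zeta\in V_\CC}\h(\|(1,\zeta)\|_2)\le \Cn d^{n-1}(d+\tau)$, and the stated upper bound follows after absorbing the combinatorial factor $\log\binom{n+d_p}{n}$ and the extra powers of $d$ into the constant hidden in $\Cn$, using $d\le d_p$.

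For the lower bound, I would introduce a fresh variable $y$ and consider the extended ideal $J=(h_1,\dots,h_s,\,y\,p-1)\subset\ZZ[\bm x,y]$. On the locus $p\ne 0$ this ideal is zero-dimensional in $n+1$ variables, with
\[
V_\CC(J) \;=\; \{\,(\zeta,1/p(\zeta)) : \zeta\in V_\CC(I),\ p(\zeta)\neq 0\,\}.
\]
The new polynomial $y\,p-1$ has degree $d_p+1$ and height $\le \tau$, so Lemma~\ref{lem:htcoord} applied in $n+1$ variables (taking $h_{s+1}:=y\,p-1$ with $e=d_p+1$, so that the exponent on $d$ is $(n+1)-2 = n-1$) gives
\[
\sum_{\zeta'\in V_\CC(J)}\h(\|(1,\zeta')\|_2) \;\le\; {\cl C}(n+1)\, d^{n-1}(d_p+1)(d+\tau).
\]
Since $\|(1,\zeta,1/p(\zeta))\|_2\ge 1/|p(\zeta)|$, we have $\h(\|(1,\zeta,1/p(\zeta))\|_2)\ge -\h(p(\zeta))$ for each $\zeta\in V_\CC$ with $p(\zeta)\ne 0$. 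Splitting $Z=Z^+\sqcup Z^-$ according to whether $|p(\zeta)|\ge 1$ or $<1$, the contribution of $Z^+$ to $\sum_{\zeta\in Z}\h(p(\zeta))$ is nonnegative, while the $Z^-$ contribution is bounded below by $-{\cl C}(n+1)\,d^{n-1}(d_p+1)(d+\tau)$, which is absorbed into $-\Cn d^{n-1}d_p(d+\tau)$ after adjusting constants (using $d_p\ge 1$).

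The main obstacle is not computational but conceptual: the Rabinowitch trick for the lower bound. One has to recognise that enlarging the variable set by $y=1/p$ converts the sought-for lower bound on the $|p(\zeta)|$'s into an instance of Lemma~\ref{lem:htcoord} in one more variable, at the price of a single additional factor of $d_p$. Everything else is routine bookkeeping of degrees, heights and absolute constants that are absorbed into $\Cn$.
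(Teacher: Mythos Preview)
Your proof is correct, and for the lower bound it coincides with the paper's argument: the paper also passes to $J=I+(1-x_{n+1}p(\bm x))$ in $\ZZ[\bm x,x_{n+1}]$, observes $V_{\CC}(J)=\{(\zeta,1/p(\zeta)):p(\zeta)\ne 0\}$, and applies Lemma~\ref{lem:htcoord} in $n+1$ variables with $e=d_p+1$.

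The only genuine difference is in the upper bound. You use the elementary pointwise estimate $|p(\zeta)|\le 2^{\tau}\binom{n+d_p}{n}\|(1,\zeta)\|_2^{\,d_p}$ and then invoke Lemma~\ref{lem:htcoord} directly on $V_\CC(I)$ in $n$ variables; the paper instead adds the auxiliary equation $x_{n+1}-p(\bm x)=0$, so that $p(\zeta)$ becomes a coordinate of the point $(\zeta,p(\zeta))$, and then applies Lemma~\ref{lem:htcoord} in $n+1$ variables once more. Your route is slightly more elementary and avoids a second use of the arithmetic B\'ezout bound, at the cost of having to absorb the combinatorial factor $\binom{n+d_p}{n}$ and the term $|Z|\,\tau$ by hand into $\Cn$ (which is harmless since $|Z|\le d^{n}$, $d\le d_p$, and $\log\binom{n+d_p}{n}\le n\log(d_p+1)\le 2n\,d_p$). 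The paper's route is more uniform---both inequalities come from the same mechanism, adding one variable and reading off a coordinate---and it makes the appearance of the factor $d_p$ completely transparent, as it is simply the degree of the extra equation. Either argument yields the stated bound.
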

\begin{proof}
We apply \Cref{lem:htcoord}  to the roots of the zero-dimensional ideals of $\ZZ[\bm x, x_{n+1}]$:
\begin{align*}&I_1=I+(x_{n+1}-p(\bm x))\ \subset \ZZ[\bm x,x_{n+1}], \mbox{ with } \ V_\CC(I_1)=\{(\zeta,p(\zeta)):\, \zeta\in V_\CC\}, \\
&I_2=I+(1-x_{n+1}p(\bm x))\  \subset \ZZ[\bm x,x_{n+1}] , \mbox{ with } \ V_\CC(I_1)=\{(\zeta,1/p(\zeta)):\, \zeta\in V_\CC, p(\zeta)\ne 0\}.\end{align*}
For $I_1$ this gives by \eqref{lem:htcoord} that 
\begin{align*} \sum_{\zeta\in Z}\h(p(\zeta)) & \le \sum_{\zeta\in V_\CC}\h(\|(1,\zeta,p(\zeta))\|_2) \\ 
&\le \Cn d^{n-1}d_p(d+\tau).
\end{align*}
Similarly, for $I_2$,  $$ \sum_{\zeta\in Z}\h\big(\frac{1}{p(\zeta)}\big)
\le \Cn d^{n-1} (d_p +1)(d+\tau)
\le \Cn d^{n-1}d_p(d+\tau)$$
since $d_p\ge d\ge 1$.
\end{proof}

\subsection{Reduction in the quotient algebra $\QQ[\bm x]/I$}\label{subs:52}
The goal of this section is to compute bounds for the height of an integer polynomial $p\in \ZZrng$ when reducing it modulo a zero-dimensional {\em radical} ideal $I$. 
We will achieve this by passing  through the description of the quotient algebra $\QQrng/I$ via its rational univariate representation (also known as geometric resolution), in order to obtain better bounds than if we were working directly with multivariate reduction by the graded basis $\bm h$ of the ideal $I$.

\begin{lemma}\label{lem:rur-height}
Let  $J=(h_1, \dots, h_s, h_{s+1})$ be a zero-dimensional {\em radical}  ideal  defined by polynomials $h_1,\dots, h_s, h_{s+1}\in \ZZrng$  with $\deg(h_j)\le d$ for $1\le j\le s$, $e=\max\{d,\deg(h_{s+1})\}$ and $\h(h_j)\le \tau$ for $1\le j\le s+1$. Then there exists $w_0, w_1, \ldots, w_n \in \ZZ[t]$ with $\deg(w_0)= D := \dim \QQrng/J$, $\deg(w_i)< D$ for $1\le i\le  n$ such that 
\begin{eqnarray*}
    \varphi_J: \QQrng/J & \longrightarrow & \QQ[t]/(w_0)\\
          x_i & \longmapsto & w_i (w_0')^{-1}
\end{eqnarray*}
is an isomorphism of algebras and
$$ 
\h(w_i) \le \Cnd d^{n-2}e(d+\tau),
 \quad 0\le i\le n.
$$
\end{lemma}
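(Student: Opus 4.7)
My strategy is to construct a classical rational univariate representation (RUR) of the radical ideal $J$ from a generic separating linear form, and then to extract the height bounds from the arithmetic B\'ezout theorem that underlies \Cref{lem:htcoord} and \Cref{lem:htpzeta}.

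I would first choose integers $a_1, \ldots, a_n$ with $\h(a_i) = \cl O(\log D) = \cl O(n \log d)$ such that $u := a_1 x_1 + \cdots + a_n x_n$ separates $V_\CC(J)$, i.e. $u(\zeta) \ne u(\zeta')$ for distinct roots; this is possible because radicality implies $|V_\CC(J)| = D$ and we need to avoid at most $\binom{D}{2}$ affine hyperplanes in the $a$-space. Then I set
\begin{equation*}
\widetilde w_0(t) := \prod_{\zeta \in V_\CC(J)}(t - u(\zeta)), \qquad \widetilde w_i(t) := \sum_{\zeta \in V_\CC(J)} \zeta_i \prod_{\eta \ne \zeta}(t - u(\eta)),
\end{equation*}
which lie in $\QQ[t]$ by Galois-invariance of $V_\CC(J)$; after multiplication by a common denominator $\nu \in \NN$, these become the claimed $w_0, w_i \in \ZZ[t]$. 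By construction $w_0$ is squarefree of degree $D$ and $w_i(u(\zeta)) = \zeta_i\, w_0'(u(\zeta))$ for every $\zeta \in V_\CC(J)$, so $x_i w_0'(u) - w_i(u)$ vanishes on $V_\CC(J)$ and lies in $J$ (since $J$ is radical). Because $u$ separates, $w_0'(u(\zeta)) \ne 0$ for every $\zeta$, so $w_0'(u)$ is invertible modulo $J$ by \Cref{lem:invert}, and a dimension count promotes the resulting algebra morphism $\varphi_J$ to an isomorphism.

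The main work is then the height bound. The coefficients of $\widetilde w_0$ (resp.\ $\widetilde w_i$) are elementary symmetric functions in the values $u(\zeta)$ (resp.\ weighted by $\zeta_i$). I plan to bound them by applying \Cref{lem:htpzeta} to the polynomials $u$ and to $x_i \cdot \prod_k (u - u(\eta_k))$, and then collecting the contributions via the Mahler-type inequality $\h(e_k(\alpha_1, \ldots, \alpha_D)) \le \log\binom{D}{k} + \sum_j \max(0, \h(\alpha_j))$. More directly, applying the arithmetic B\'ezout bound of \cite{KPS01} to the augmented zero-dimensional variety $V_\CC\bigl(J + (t - u(\bm x))\bigr) \subset \CC^{n+1}$ controls the global height of the Chow form of $V_\CC(J)$; specializing this Chow form at $u_0 = t$, $u_i = a_i$ recovers $w_0(t)$ up to the scaling $\nu$, and a coordinatewise variant recovers the $w_i(t)$. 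Since the extra equation $t - u(\bm x)$ is linear, it contributes only a factor $1$ (not $d$) to the B\'ezout number, which is the key reason why the bound lands at $d^{n-2}e$ rather than $d^{n-1}e$.

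The principal obstacle is to absorb cleanly the factors $\h(a_i) \le \cl O(n \log d)$ and $\log \binom{D}{k} \le D \le d^n$ into the prefactor so that the bound remains $\cl C(n;d)\, d^{n-2} e (d+\tau)$ rather than the looser $\cl C(n)\, d^{n-1} e(d+\tau)$. This requires invoking the version of KPS with mixed degrees (not its uniform-degree specialization) and choosing $\nu$ from the natural denominators appearing in the Chow form, so that the integrality rescaling does not inflate the height. The extra $\log d$ factor that separates $\cl C(n;d)$ from $\cl C(n)$ is precisely what allows these two auxiliary contributions to be swept into the final estimate.
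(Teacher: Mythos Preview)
Your approach is essentially the paper's: build the rational univariate representation from a separating integer linear form and bound the heights of $w_0,\ldots,w_n$ via the arithmetic B\'ezout estimate of \cite{KPS01} on the Chow form of $V_\CC(J)$. The one refinement the paper makes explicit---and which you only gesture at with ``a coordinatewise variant''---is to work with the \emph{generic} Chow form $W_0(t,U)=a\prod_{\zeta}(t-U_1\zeta_1-\cdots-U_n\zeta_n)\in\ZZ[t,U]$ and recover $w_i$ as the specialization of $-\partial W_0/\partial U_i$ at $U=u$; this makes the integrality of all $w_i$ automatic (no separate denominator $\nu$ to track) and reduces every height bound to the single estimate $\hh(\Chow_{V_\CC(J)})\le \Cn d^{n-2}e(d+\tau)$, so the detour through the augmented variety $V_\CC(J+(t-u))$ is unnecessary.
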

\begin{proof}
Let $L(U,\bm x)=U_1x_1+\cdots +U_nx_n\in \QQ[U,\bm x]$ be a generic linear form, and consider the polynomial 
$$
W_0(t,U)=\Chow_V(t, -U_1, \ldots, -U_n) = a\prod_{\zeta\in V}(t-L(U,\zeta))\in  \ZZ[t][U],
$$ 
where $V:=V_\CC(J)$ and $\Chow_V$ is the Chow form of $V$, with $a\in \NN$ such that $W_0$ is a primitive polynomial (this is possible since $V$ is defined over $\QQ$).

Given  $\zeta\in V$,  $W_0(L(U,\zeta), U)=0$ implies that for all $i$  we have
$$\frac{\partial W_0}{\partial t}(L(U,\zeta), U)\zeta_i+ \frac{\partial W_0}{\partial U_i}(L(U,\zeta),U)=0.$$
Therefore, by choosing $u=(u_1,\dots,u_n)\in \ZZ^{n}$ such that $\ell(\bm x):=L(u,\bm x)$ satisfies $\ell(\zeta)\ne \ell(\xi)$ for all $\zeta\ne \xi \in V$,
the polynomials  $$w_0(t)=W_0(t,u),\ w_i(t)=-\frac{\partial W_0}{\partial U_i}(t,u)\, \in \ZZ[t]$$ give  the stated isomorphism of algebras, where we observe that the map $\varphi_J$ is well-defined since $J$ is radical so that $w_0$ has simple roots in $\CC$ and $w_0'$ is invertible modulo $w_0$.

In order to choose $\ell(\bm x)=u_1x_1+\cdots + u_nx_n\in \ZZ[\bm x]$ that separates the points in $V$, we can observe that the non-zero polynomial 
$$
P(U):=\prod_{(\zeta,\xi)\in V\times V, \zeta\ne \xi}(L(U,\zeta)-L(U,\xi))\in \CC[U]
$$
has degree $D':=D(D-1)/2$ and therefore, there exists an element $u$ of the grid $$\{(k_1,\dots,k_n)\in \ZZ^{n}\,:\, 0\le k_i\le  D'\}$$
where $P$ does not vanish.

By \cite[Section 1.2.4, Inequalities (1.1) and (1,2)]{KPS01} and \Cref{lem:htcoord} we have 
$$
\hh(W_0)=\hh(\Chow_V)\le \hh(V)+\log(n+1)\deg(V) \le \Cn d^{n-2} e (d + \tau ).
$$
We conclude by observing that the polynomial
$$
W_0(t,U)=\sum_{\alpha,i:|\alpha|+i=D}a_{\alpha,i}t^iU^\alpha =\sum_{i=0}^D t^i \big(\sum_{|\alpha|=D-i} a_{\alpha,i}U^\alpha\big)\in \ZZ[t,U]$$ is homogeneous of degree $D$ and  $-\frac{\partial W_0(t,U)}{\partial U_i}\in \ZZ[t,U]$  is homogeneous of degree $D-1$, and therefore 
each of the  (integer) coefficients of $w_0(t)$ has absolute value bounded by  
$2^{n+D} 2^{\h(W_0)}D'^{D}$, while  each of the integer coefficients of $w_i(t)$, $1\le i\le n$, has absolute value bounded by $2^{n+D-1} 2^{\h(\frac{\partial W_0(U,t)}{\partial U_i})}D'^{D-1}$. This adds a factor $2\,n\,d^n\log(d)$ that we can summarize in 
$$ \ \hspace{4.25cm}  \h(w_i)\le \Cnd d^{n-2}e(d+\tau), \quad 0\le i\le n. \ \hspace{4.25cm} \qedhere$$
\end{proof}

Consider now a zero-dimensional (and radical) ideal $I=(h_1, \dots, h_s)$  defined by $h_1,\dots, h_s\in \ZZrng$  with $\deg(h_j)\le d$, $\h(h_j)\le \tau$.
We define the following crucial map $\cl U $, which essentially computes a univariate normal form for any polynomial $p\in \QQrng$:
\begin{eqnarray}\label{eq:defN2}
    \cl U :\ \QQrng & \longrightarrow & \QQ[t]/(w_0)\simeq \vspan{1, \ldots, t^{D-1}}_{\QQ}\\
          p & \longmapsto & w_0' \, \varphi_I(\pi_I(p)) \nonumber
\end{eqnarray} where  $\pi_I:\QQrng \to \QQrng/I$ is the projection. 
Since $\varphi_I$ is an isomorphism and $w_0'$ is inversible in $\QQ[t]/(w_0)$, the map $\cl U$  is surjective since $\pi_I$ is and satisfies $\ker \cl U= I$.

\begin{lemma}\label{lem:height-N}
Let $I=(\bm h)$  be a zero-dimensional radical ideal defined by polynomials $h_1,\dots, h_s\in \ZZrng$ with  $\deg(h_j)\le d$ and $\h(h_j)\le \tau$. Let $p\in \ZZ[\bm x]$ be such that $\h(p)\le \tau$ and let $d_p = \max\{d, \deg(p)\}$. Then, $\cl U(p) \in \vspan{1, \ldots, t^{D-1}}_{\ZZ}$ and
$$
\h(\cl U(p)) \le  \Cnd d^{n-1}d_p(d+\tau).
$$
\end{lemma}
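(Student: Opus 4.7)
The plan is to derive an explicit Lagrange-type interpolation formula for $\cl U(p)$, and then bound every coefficient using the estimates already proved in this section.

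Since $I$ is radical and the linear form $\ell = u_1 x_1 + \cdots + u_n x_n$ from the proof of \Cref{lem:rur-height} separates $V_\CC$, the scalars $\ell(\zeta)$ for $\zeta \in V_\CC$ are the $D$ pairwise distinct roots of $w_0$. Under the evaluation isomorphism $\QQ[t]/(w_0) \simeq \prod_{\zeta \in V_\CC}\CC$, the ring isomorphism $\varphi_I$ coincides with the map $\pi_I(p) \mapsto (p(\zeta))_\zeta$, because both agree on the generators $x_i$ (by the construction of the $w_i$ from the Chow form, one checks $w_i(\ell(\zeta))/w_0'(\ell(\zeta))=\zeta_i$). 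Consequently $\varphi_I(\pi_I(p))(\ell(\zeta)) = p(\zeta)$, and therefore
$$\cl U(p)(\ell(\zeta)) = w_0'(\ell(\zeta))\, p(\zeta) \quad \text{for every } \zeta \in V_\CC.$$
Since $\cl U(p)$ has degree $<D$ and $w_0'(\ell(\zeta)) = a \prod_{\xi\ne\zeta}(\ell(\zeta)-\ell(\xi))$ with $a$ the leading coefficient of $w_0$, Lagrange interpolation at the $D$ nodes $\{\ell(\zeta)\}_\zeta$ produces the closed form
$$\cl U(p)(t) = a \sum_{\zeta \in V_\CC} p(\zeta) \prod_{\xi \ne \zeta}(t - \ell(\xi)).$$

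From this identity I would bound every coefficient of $\cl U(p)$ by expanding the products through elementary symmetric polynomials, obtaining an estimate of the form
$$\h(\cl U(p)) \le \h(a) + D + \log\Big(\sum_{\zeta \in V_\CC} |p(\zeta)|\Big) + \sum_{\xi \in V_\CC}\log\max(1, |\ell(\xi)|).$$
Each piece is controlled by the machinery of this section: $\h(a) \le \Cn d^{n-1}(d+\tau)$ follows from the Chow-form estimate in the proof of \Cref{lem:rur-height} (with $e=d$); the inequality $|\ell(\xi)| \le D^2 \|(1,\xi)\|_2$ together with \Cref{lem:htcoord} gives $\sum_\xi \log\max(1,|\ell(\xi)|) \le 2D\log D + \Cn d^{n-1}(d+\tau)$; and the standard polynomial-evaluation bound $|p(\zeta)| \le \binom{n+d_p}{n}2^\tau \|(1,\zeta)\|_2^{d_p}$, combined with $\max_\zeta \h(\|(1,\zeta)\|_2) \le \Cn d^{n-1}(d+\tau)$ (valid because these heights are nonnegative and their sum is bounded by \Cref{lem:htcoord}), yields $\log\sum_\zeta|p(\zeta)| \le \log D + n\log(d_p+1) + \tau + d_p\cdot \Cn d^{n-1}(d+\tau)$. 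Using the B\'ezout bound $D \le d^n$ and absorbing the remaining logarithmic factors into the constant hidden in $\Cnd$, the dominant contribution $d_p\cdot \Cn d^{n-1}(d+\tau)$ survives and produces the claimed inequality.

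The main obstacle I foresee is justifying the integrality statement $\cl U(p) \in \vspan{1,\ldots,t^{D-1}}_\ZZ$, which is not visible directly from the Lagrange formula because its individual summands involve algebraic numbers. I would establish it by identifying $\cl U(p)$, up to sign, with $\partial_v W_{\widetilde V}(t, u, v)\big|_{v=0}$, where $W_{\widetilde V} \in \ZZ[t,U,v]$ is the primitive Chow form of the zero-dimensional variety $\widetilde V = \{(\zeta, p(\zeta)) : \zeta \in V_\CC\} \subset \CC^{n+1}$ defined over $\ZZ$ by the ideal $I + (y - p(\bm x)) \subset \ZZ[\bm x, y]$, and then carefully matching the normalization constants of $W_{\widetilde V}$ and $W_0$ to recover the integrality.
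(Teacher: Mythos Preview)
Your Lagrange-interpolation route to the height bound is correct: the identity
\[
\cl U(p)(t)\;=\;a\sum_{\zeta\in V_\CC} p(\zeta)\prod_{\xi\ne\zeta}\bigl(t-\ell(\xi)\bigr)
\]
is valid, and each piece you isolate ($\h(a)$ via the Chow-form height in the proof of \Cref{lem:rur-height}, $\sum_\xi\log\max(1,|\ell(\xi)|)$ via \Cref{lem:htcoord}, and $\log\sum_\zeta|p(\zeta)|$ via the pointwise bound $|p(\zeta)|\le\binom{n+d_p}{n}2^\tau\|(1,\zeta)\|_2^{d_p}$ together with $\max_\zeta\h(\|(1,\zeta)\|_2)\le\sum_\zeta\h(\|(1,\zeta)\|_2)$) is controlled as you claim, with the single factor $d_p$ entering through $\|(1,\zeta)\|_2^{d_p}$ and the remaining terms absorbed using $D\le d^n$. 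This is, however, a genuinely different argument from the paper's. The paper never writes a Lagrange formula or bounds any evaluation: it simply forms $J=I+(x_{n+1}-p(\bm x))\subset\QQ[\bm x,x_{n+1}]$, remarks that the \emph{same} separating form $\ell$ works for $V_\CC(J)=\{(\zeta,p(\zeta))\}$, and identifies $\cl U(p)$ with the extra RUR polynomial $w_{n+1}$ produced by \Cref{lem:rur-height} applied to $J$. Both the integrality and the height bound (with $e=d_p$ and $n$ replaced by $n{+}1$) are then read off from that single invocation. Your approach is longer but fully explicit and shows exactly where $d_p$ comes from; the paper's is a one-line structural reduction.

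On the integrality of $\cl U(p)$: you are right that the Lagrange formula alone cannot give it (its summands are algebraic), and your proposed remedy---pass to the Chow form of $\widetilde V=\{(\zeta,p(\zeta))\}$ and differentiate in the new variable---is precisely the paper's device, since that derivative evaluated at $(u,0)$ \emph{is} $w_{n+1}$. The ``matching of normalization constants'' you anticipate is the content of the paper's remark that the RUR of $J$, restricted to $x_1,\dots,x_n$, coincides with that of $I$. So your plan is the correct one, but executing it amounts to carrying out the paper's reduction to $J$; it is not an independent argument saved by the interpolation formula.
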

\begin{proof}
Let $\varphi_I: \QQ[\bm x]/I\to \QQ[t]/(w_0)$, $x_i\mapsto w_i(w'_0)^{-1}$, be the isomorphism of \Cref{lem:rur-height}. We observe that the linear form $\ell(\bm x)=L(u,\bm x)$ that defines the polynomial $w_0(t)=W_0(u,t)$ in the proof of that lemma is also a separating form for the vanishing set $V_\CC(J)=\{(\zeta,p(\zeta)): \zeta\in V_\CC(I)\}$ of the ideal  
$J:=(h_1(\bm x), \ldots, h_s(\bm x), x_{n+1} - p(\bm x)) \subset \QQ[\bm x, x_{n+1}]$.
Moreover, it is straightforward to verify that under this isomorphism, we have $\varphi_J(x_i)=w_i(w'_0)^{-1}$ for $1\le i\le n$. Let $w_{n+1} \in \ZZ[t]$ be such that $\varphi_J(x_{n+1})=w_{n+1}(w'_0)^{-1}$. Then
$$\cl U (p)=w_0'\, \varphi_{I}(\pi_I(p))=w_0' \, \varphi_{J}(\pi_J(p))=w_0' \varphi_J(x_{n+1})= w_{n+1}$$
since $p\in \ZZrng$ and $x_{n+1}-p(\bm x)\in J$.
This implies that $\cl U (p)\in   \vspan{1, \ldots, t^{D-1}}_{\ZZ}$ and the stated bound by application of 
\Cref{lem:rur-height} to $J\subset \QQ[\bm x,x_{n+1}]$.
\end{proof}

\begin{corollary}\label{cor:height-N'}
Let $I=(\bm h)$  be a zero-dimensional radical ideal defined by polynomials $h_1,\dots, h_s\in \ZZrng$ with  $\deg(h_j)\le d$ and $\h(h_j)\le \tau$. Let $p\in \ZZ[\bm x]$ be such that $\h(p)\le \tau_p$ and let $d_p = \max\{d, \deg(p)\}$. Then, $\cl U(p) \in \vspan{1, \ldots, t^{D-1}}_{\ZZ}$ and
$$
\h(\cl U(p)) \le  \Cnd d^{n-1}d_p(d+\tau)  + \tau_p.
$$
\end{corollary}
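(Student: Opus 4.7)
The plan is to exploit the $\QQ$-linearity of the map $\cl U$ and reduce everything to \Cref{lem:height-N} applied monomial by monomial. Indeed, $\cl U = w_0' \cdot \varphi_I \circ \pi_I$ is a composition of $\QQ$-linear maps, so writing $p = \sum_\alpha a_\alpha \bm x^\alpha$ with $a_\alpha \in \ZZ$ and $|a_\alpha| \le 2^{\tau_p}$, I would obtain $\cl U(p) = \sum_\alpha a_\alpha \, \cl U(\bm x^\alpha)$. This decomposition cleanly separates the contribution of the coefficients of $p$ (which will carry the additive $\tau_p$) from the monomial structure (which is absorbed by the existing lemma).

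Next, I would apply \Cref{lem:height-N} to each monomial $\bm x^\alpha$ occurring in $p$. Since $\h(\bm x^\alpha) = 0 \le \tau$ and $\max\{d, \deg(\bm x^\alpha)\} \le d_p$, the lemma directly gives $\cl U(\bm x^\alpha) \in \vspan{1, \ldots, t^{D-1}}_\ZZ$ together with the uniform height bound $\h(\cl U(\bm x^\alpha)) \le \Cnd d^{n-1} d_p (d + \tau)$. Summing, $\cl U(p)$ lies in $\vspan{1, \ldots, t^{D-1}}_\ZZ$, as it is an integer linear combination of integer polynomials of degree $< D$.

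For the height of $\cl U(p)$, the triangle inequality would give $\h(\cl U(p)) \le \tau_p + \Cnd d^{n-1} d_p (d+\tau) + \log N$, where $N$ is the number of monomials of $p$, bounded by $\binom{n+d_p}{n}$. The extra $\log N = O(n \log d_p)$ contribution is absorbed into the factor $\Cnd = c\, n\log(d(n+1))$ by enlarging the universal constant $c$, yielding the claimed estimate $\h(\cl U(p)) \le \Cnd d^{n-1} d_p(d+\tau) + \tau_p$.

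I do not expect any serious obstacle here: the result is essentially a linear interpolation of \Cref{lem:height-N} over the coefficients of $p$. The only minor point to verify carefully is that $\cl U$ is $\QQ$-linear on all of $\QQrng$ (not merely on $\QQrng/I$), which is immediate from its definition as the composition $w_0' \cdot \varphi_I \circ \pi_I$ of $\QQ$-linear maps, and that this $\QQ$-linear combination of integer polynomials remains integer, which is clear since the $a_\alpha$ are integers.
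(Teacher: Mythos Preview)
Your proposal is correct and follows essentially the same approach as the paper: decompose $p$ into monomials, apply \Cref{lem:height-N} to each $\bm x^\alpha$ (using $\h(\bm x^\alpha)=0\le\tau$), and absorb the $\log$ of the number of monomials into the main term by adjusting the constant in $\Cnd$. The paper bounds the monomial count by $(d_p+1)^n$ rather than $\binom{n+d_p}{n}$, but this is an immaterial difference.
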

\begin{proof}
Let $p= \sum_{|\alpha|\le d_p} p_{\alpha} \bm x^{\alpha}$ with $p_{\alpha} \in \ZZ$ and $\h(p_{\alpha})\le \tau_p$. 
Then $\cl U(p)= \sum_{|\alpha|\le d_p} p_{\alpha} \, \cl U(\bm x^{\alpha})$, where 
 by \Cref{lem:height-N}, $\h(\cl U(\bm x^{\alpha})) \le \Cnd d^{n-1} d_p (d+\tau)$. Since  the number of coefficients of $p$ is bounded by $(d_p+1)^n$, we have
$$ 
\h(\cl U(p)) \le \Cnd d^{n-1} d_p (d+\tau) + {\cl C}(n;d_p) + \tau_p \le \Cnd d^{n-1} d_p (d+\tau) + \tau_p,
$$
by adjusting the term $\Cnd$.
\end{proof}

We will now bound the height  of  the {\em normal form} $\cl N(p)\in \vspan{B}_{\QQ}$ of a polynomial $p\in \ZZrng$, which is the {\em unique}  polynomial $\cl N(p)$ in $\vspan{B}_{\QQ}$  such that 
\begin{equation}\label{eq:NF}
p =  p_1 h_1+ \cdots + p_s h_s +\cl N(p)
\end{equation}
for some $p_j\in \QQrng$, $1\le j\le s$ 
(we recall that here that $B\subset \ZZrng$ is a  monomial basis of $\QQrng/I$ with   $\deg(B) = \max\{\deg(b):\, b \in B \}$). To do that, we are going to exploit the properties of $\cl U$, defined in \eqref{eq:defN2}.

\begin{proposition}\label{prop:Creduction'}
Let $I=(\bm h)$  be a zero-dimensional radical ideal defined by polynomials $h_1,\dots, h_s\in \ZZrng$ with  $\deg(h_j)\le d$, $\h(h_j)\le \tau$, let $p\in \ZZ[\bm x]$ with $d_p = \max\{ \deg(p),d\}$ and $\h(p)\le \tau_p$, and set $\delta:= \max\{d, \deg(B)\}$.   Then, $\cl N(p)= \widehat{\cl N}(p)/{\nu}$ where $\nu\in \NN$ and $\widehat{\cl N}(p) \in \langle B\rangle_\ZZ$ satisfy 
$$
\h(\nu)\le \Cnd d^{2n-1}\delta\, (d+\tau)\quad \mbox{and} \quad
\h(\widehat{\cl N}(p))\le \Cnd d^{n-1} (d^n \delta+ d_p )\, (d+\tau)+ \tau_p .$$
\end{proposition}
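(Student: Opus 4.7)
The plan is to convert the normal-form computation to a linear algebra problem in the univariate quotient $\QQ[t]/(w_0)$ via the map $\cl U$ of \eqref{eq:defN2}. Since $I$ is radical, $w_0$ has simple roots, so $w_0'$ is a unit modulo $w_0$, and $\cl U$ induces an isomorphism $\QQrng/I \simeq \QQ[t]/(w_0)$. In particular, the images $\cl U(b_1),\dots,\cl U(b_D)$ of the monomial basis $B=(b_1,\dots,b_D)$ of $\QQrng/I$ form a basis of $\QQ[t]/(w_0)$ over $\QQ$, and by \Cref{lem:height-N} and \Cref{cor:height-N'} all of $\cl U(b_1),\dots,\cl U(b_D),\cl U(p)$ have integer coefficients in the monomial basis $1,t,\dots,t^{D-1}$.

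Next I would form the matrix $M\in\ZZ^{D\times D}$ whose $i$-th column records the coefficients of $\cl U(b_i)$, together with the vector $\bm c\in\ZZ^D$ of coefficients of $\cl U(p)$. The normal form $\cl N(p)=\sum_{i=1}^D\alpha_i b_i$ is then characterized by the linear system $M\bm\alpha=\bm c$, and Cramer's rule gives $\alpha_i=\det(M_i)/\det(M)$, where $M_i$ denotes $M$ with its $i$-th column replaced by $\bm c$. Setting $\nu:=|\det(M)|\in\NN$ and $\widehat{\cl N}(p):=\mathrm{sgn}(\det M)\sum_i\det(M_i)\,b_i\in\vspan{B}_\ZZ$ then yields the desired integer representation $\cl N(p)=\widehat{\cl N}(p)/\nu$.

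To bound the heights I would apply Hadamard's inequality, using \Cref{lem:height-N} on each monomial $b_i$ (with $\h(b_i)=0$ and $d_{b_i}\le\delta$) to bound every entry of $M$ by $\Cnd d^{n-1}\delta(d+\tau)$, and \Cref{cor:height-N'} to bound every entry of $\bm c$ by $\Cnd d^{n-1}d_p(d+\tau)+\tau_p$. Combined with the B\'ezout estimate $D\le d^n$, Hadamard yields $\h(\nu)\le D\cdot\Cnd d^{n-1}\delta(d+\tau)+\tfrac{D}{2}\log D$ and, for each $\det(M_i)$ in which a single column is replaced by $\bm c$, an estimate of the form $(D-1)\,\Cnd d^{n-1}\delta(d+\tau)+\Cnd d^{n-1}d_p(d+\tau)+\tau_p+\tfrac{D}{2}\log D$. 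These give precisely the claimed bounds after absorbing the logarithmic slack $\tfrac{D}{2}\log D\le \tfrac{n}{2}\,d^n\log d$ into the constant defining $\Cnd$.

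The main technical point to verify is that $M$ is invertible and that the construction indeed produces $\cl N(p)$; both follow once one notes that $\cl U$ is a $\QQ$-linear isomorphism $\QQrng/I \simeq \QQ[t]/(w_0)$ with kernel $I$. Beyond this, the proof is mechanical: the height estimates are a direct application of Hadamard together with the bounds already established in \Cref{lem:height-N} and \Cref{cor:height-N'}, so I do not expect any serious obstacle.
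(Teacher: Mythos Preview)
Your proof is correct and follows essentially the same route as the paper: reduce to a linear system via $\cl U$, then apply Cramer's rule and Hadamard's inequality (which the paper packages as \Cref{rem:HadamardD}) together with the height bounds from \Cref{lem:height-N} and \Cref{cor:height-N'}. The only cosmetic difference is that you unwind Cramer--Hadamard explicitly rather than citing the appendix lemma.
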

\begin{proof}
Let $\cl N(p) = \sum_{i=1}^{D} c_ib_i $ where  $B=\{b_1, \ldots, b_D\}$ and   $c_i \in \QQ$, $1\le i\le D$. Then we have $\cl U(p)=\cl U(\cl N(p))$ since $p\equiv \cl N (p)\ \mod I$. Therefore 
$$
\cl U(p)= \sum_i  c_i \,\cl U(b_i)
$$
where $\bm c = (c_1, \ldots, c_D)$ is the solution of the system $\bm A \bm c=\bm b$, with $\bm A\in \ZZ^{D\times D}$ the invertible matrix of coefficients of $\cl U(b_i) \in \vspan{1, \ldots, t^{D-1}}_{\ZZ}$ and $\bm b \in \ZZ^{D}$ the coefficient vector of $\cl U(p)$.

By \Cref{cor:height-N'}, we have
$$
\h(\bm A) \le  \Cnd d^{n-1} \delta (d+\tau), \quad 
\h(\bm b) \le  \Cnd d^{n-1}d_p  (d+\tau)  + \tau_p.
$$
By  \Cref{rem:HadamardD}, we deduce that $\bm c =  \widehat {\cl N}(p)/\nu$ with $\nu \in \NN$, $\widehat{\cl N}(p)\in \vspan{1, \ldots, t^{D-1}}_{\ZZ}$, and 
\begin{align*}
   & \h(\nu)\le D\big(\log(D)+\Cnd  d^{n-1}  \delta (d+\tau)\big) \le \Cnd  d^{2n-1} \delta (d+\tau),\\
   &  \h(\widehat {\cl N}(p))  \le (D-1)\big(\log(D) +\Cnd d^{n-1}\, \delta \,(d+\tau)\big) + \Cnd d^{n-1} d_p (d+\tau) + \tau_p\\ & \qquad \ \quad \,  \le \Cnd d^{n-1} (d^n \delta+ d_p )\, (d+\tau)+ \tau_p
\end{align*}
by adjusting $\Cnd$, which concludes the proof.
\end{proof}

\begin{remark}\label{rem:complexreduction}
  \Cref{prop:Creduction'} remains true for a polynomial $p\in \CCrng$. We get $\cl N (p)\in \langle B\rangle _\CC$ with 
  $$\h({\cl N}(p))\le   \Cnd d^{n-1}(d^n \delta+d_p)(d+\tau)+ \tau_p .$$
  This is because in the proof,  when solving the linear  system $\bm A \bm y=\bm b$ where $\bm A\in \ZZ^{D\times D}$ and $b\in \CC^D$,  $\nu\in \NN $ satisfies $\nu \ge 1$.
\end{remark}
We will also need to  bound the height of the polynomials $p_1,\dots,p_s\in \QQrng$  in a decomposition as in \eqref{eq:NF}, assuming that $\vb{h} = \{\, h_1,\dots,h_s \,\}$ is a \emph{graded basis} of $I$. The bound we obtain for $\h(\cl N(p))$ has a worse dependence on the degree of $p$ than in \Cref{prop:Creduction'}, and in the sequel we will not apply it, but this is the only bound we get for the polynomial coefficients $p_j$, $1\le j\le s$.
\begin{proposition}\label{prop:reduction}
Let  $h_1, \ldots, h_s$ be a graded basis of $I$ with $\deg(h_j)\le d$ and $\h(h_j)\le \tau$, $1\le j\le s$,  and let $B= \{b_1, \ldots, b_D\}$ be a monomial basis of $\QQrng/I$.
Let $p\in \ZZrng$ and set $d_p:= \deg(p)+1$ and $\h(p)\le \tau_p$. Then there exist $\nu\in \NN$, $ \lambda_j\in \ZZ$, $1\le i\le D$,  and $p_j\in \ZZrng$ with $\deg(p_j)\le \deg(p)-\deg(h_j)$, $1\le j \le s$,  such that 
$$
p =\frac{1} {\nu} \big(\sum_{j=1}^s p_j h_j +\sum_{i=1}^{D} \lambda_i b_i   \big)
$$ where $\cl N(p)= \dfrac{1}{\nu}(\sum_{i=1}^D \lambda_i b_i)$, satisfying 
$$
\h(\nu) \le  \cst(n; d_p) d_p^n \tau \quad\mbox{and}\quad 
\h(\lambda_i), \h(p_j) \le  \cst(n; d_p) d_p^n \tau  + \tau_p\ \quad{ for } \ 1\le i\le D, 1\le j\le s.
$$
\end{proposition}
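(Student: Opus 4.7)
The plan is to encode the decomposition of $p$ induced by the graded basis assumption as a square integer linear system, and then apply Hadamard's inequality and Cramer's rule to bound the heights, in the spirit of \Cref{prop:Creduction'} but working directly in the multivariate setting.

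First I would set up the ambient linear algebra. Let $D_p=\deg(p)$ and $M:=\dim \QQrng_{\le D_p}=\binom{n+D_p}{n}\le d_p^n$. Since $\bm h$ is a graded basis, $I_{\le D_p}=\sum_{j=1}^s \QQrng_{\le D_p-\deg(h_j)}\cdot h_j$; and since $B$ is built inductively by degree (as recalled in \Cref{sec:preliminaries}), the subset $B':=B\cap \QQrng_{\le D_p}$ is a basis of $\QQrng_{\le D_p}/I_{\le D_p}$, giving the direct sum $\QQrng_{\le D_p}=I_{\le D_p}\oplus \vspan{B'}_\QQ$. Consequently the $\QQ$-linear map
$$\Phi:\bigoplus_{j=1}^s \QQrng_{\le D_p-\deg(h_j)}\oplus \vspan{B'}_\QQ\longrightarrow \QQrng_{\le D_p},\qquad (q_1,\dots,q_s,\mu)\mapsto \sum_j q_jh_j+\mu,$$
is surjective. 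In the natural monomial bases of source and target it is represented by a matrix $\bm M\in\ZZ^{M\times N}$ whose columns coming from the $j$-th block are the coefficient vectors of the shifts $\bm x^\alpha h_j$ (with entries of absolute value $\le 2^\tau$), and whose columns coming from $\vspan{B'}_\QQ$ are standard basis vectors.

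By surjectivity of $\Phi$, one can select $M$ columns of $\bm M$ forming an invertible submatrix $\bm M_0\in\ZZ^{M\times M}$ of height $\le \tau$. Setting the unchosen coordinates of $\bm y$ to zero and solving $\bm M_0\bm y=\bm p$, where $\bm p\in\ZZ^M$ is the coefficient vector of $p$ (of height $\le \tau_p$), Cramer's rule produces $\nu:=|\det \bm M_0|\in\NN$ and $\nu\bm y_i=\det(\bm M_0^{(i)})\in\ZZ$, with $\bm M_0^{(i)}$ obtained from $\bm M_0$ by replacing column $i$ with $\bm p$. Reading $\bm y$ back through the direct sum decomposition yields integers $\lambda_i$ (from the $\vspan{B'}_\QQ$-block) and integer polynomials $p_j\in\ZZrng$ with $\deg(p_j)\le D_p-\deg(h_j)$ (from the $j$-th $\bm h$-shift block), satisfying $\nu p=\sum_j p_j h_j+\sum_i\lambda_i b_i$ and $\cl N(p)=\tfrac{1}{\nu}\sum_i\lambda_i b_i$.

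Finally, applying Hadamard's inequality as in \Cref{rem:HadamardD} yields $\h(\nu)\le M\tau+\tfrac{M}{2}\log M$ and $\h(\nu\bm y_i)\le (M-1)\tau+\tau_p+\tfrac{M}{2}\log M$. Using $M\le d_p^n$ and $\log M\le n\log d_p$, and absorbing the logarithmic overhead into the prefactor $\cst(n;d_p)=cn\log(d_p(n+1))$, one obtains the claimed inequalities $\h(\nu)\le \cst(n;d_p)d_p^n\tau$ and $\h(\lambda_i),\h(p_j)\le \cst(n;d_p)d_p^n\tau+\tau_p$. The main delicacy is the direct sum decomposition $\QQrng_{\le D_p}=I_{\le D_p}\oplus\vspan{B'}_\QQ$, which requires the monomial basis $B$ to be compatible with the degree filtration; this follows from the graded basis assumption on $\bm h$ together with the inductive construction of $B$ recalled in \Cref{sec:preliminaries}.
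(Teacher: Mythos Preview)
Your proposal is correct and follows essentially the same route as the paper: encode the decomposition as an integer linear system whose coefficient matrix has entries bounded in height by $\tau$ (coming from monomial shifts of the $h_j$'s and the standard-vector columns for the $b_i$'s), note that the system is consistent because $\bm h$ is a graded basis, extract a full-rank square subsystem of size $M\le d_p^{\,n}$, and conclude via Cramer/Hadamard as in \Cref{rem:HadamardD}. The paper's own proof is terser---it does not spell out the direct-sum decomposition $\QQrng_{\le D_p}=I_{\le D_p}\oplus \vspan{B'}_\QQ$ nor the surjectivity of $\Phi$---but the underlying argument and the resulting bounds are identical.
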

\begin{proof}
To compute the vector $(\lambda_1, \ldots, \lambda_D)$ and all coefficients of all polynomials $p_j$, we solve a linear system $\bm A \bm y =\bm b$ where  $\bm A\in \ZZ^{M\times N}$ which non-zero entries are coefficients of the $b_i$'s and the polynomials $h_j$, so that $\h(\bm A)\le \tau$, and $\bm b\in \ZZ^M$ is the vector of coefficients of $p$, so that $\h(\bm b)\le \tau_p$. 
Since the number of coefficients of $p$ is at most $d_p^n$ and we can assume that $\bm A$ is of full rank $M$, we have $M\le d_p^n$. By applying \Cref{rem:HadamardD} we obtain $\nu\in \ZZ$ and the $\lambda_i\in \ZZ$ and the coefficients of the polynomials $p_j\in \ZZ[\bm x]$ with
\begin{align*}
& \h(\nu)  \le  \frac{M}{2}\log(M) + M\tau\le \cst(n; d_p) d_p^n \tau ,\\
& \h(\lambda_i), \h(p_j)  \le  \frac{M}{2}\log(M)+(M-1)\tau  + \tau_p \le \cst(n; d_p) d_p^n \tau + \tau_p. \qedhere
\end{align*}
\end{proof}

\subsection{Bounds on the singular values of the Vandermonde matrix}
\noindent{}Let  $I$ be a zero-dimensional  radical ideal and 
let $V = V_{B,\bms{\zeta}}\in \CC^{D\times D}$ be the Vandermonde matrix of the roots $V_\CC(I)=\{\zeta_1, \ldots, \zeta_D \}$ in the monomial basis $B$ of $\QQrng/I$, with $|B|=D$, and let  
 $U=(u_{\zeta_1}, ..., u_{\zeta_D})\in \CC^{D\times D}$ be the coefficient matrix of the interpolation polynomials $u_{\zeta_i}\in \CCrng$ in the basis $B$. By definition, $V^T U = \textup{Id}_D$. We denote by $\sigma_{\max}(\, \cdot \,)$ and $\sigma_{\min}(\, \cdot \,)$ respectively the  maximal and minimal singular values of a matrix.  

\begin{lemma}\label{lem:VdM} Let $I=(\bm h)$  be a zero-dimensional radical ideal defined by polynomials $h_1,\dots, h_s\in \ZZrng$ with  $\deg(h_j)\le d$, $\h(h_j)\le \tau$, and set $\delta:= \max\{d, \deg(B)\}$. Then
$$
\h(\sigma_{\max}(V)) \le  
\Cn \,d^{n-1}\delta (d+\tau)\quad \mbox{and}\quad 
\h(\sigma_{\min}(U)) \ge 
- \Cn d^{n-1}\delta (d+\tau).
$$
\end{lemma}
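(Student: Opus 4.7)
The plan is to bound $\sigma_{\max}(V)$ by the Frobenius norm and then reduce to pointwise bounds on the monomial values $b(\zeta)$ using \Cref{lem:htpzeta}, after which the bound on $\sigma_{\min}(U)$ comes for free from the duality $V^{T}U = \Id_{D}$.

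First, I would write $\sigma_{\max}(V)^2 \le \|V\|_F^2 = \sum_{i,j}|b_j(\zeta_i)|^2$, so it suffices to bound $|b_j(\zeta_i)|$ pointwise. Fix a monomial $b \in B$; note that $\h(b)=0$ and $\deg(b) \le \delta$, so \Cref{lem:htpzeta} applied to $p = b$ yields
$$-\Cn d^{n-1}\delta(d+\tau) \;\le\; \sum_{\zeta \in V_{\CC},\, b(\zeta)\neq 0}\h(b(\zeta)) \;\le\; \Cn d^{n-1}\delta(d+\tau).$$
Splitting the indices according to whether $|b(\zeta_i)| \ge 1$ or $|b(\zeta_i)| < 1$, the terms in the second group contribute nonpositively, so the sum of the nonnegative terms is itself bounded by $\Cn d^{n-1}\delta(d+\tau)$. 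In particular, each single nonnegative contribution satisfies $\h(b(\zeta_i)) \le \Cn d^{n-1}\delta(d+\tau)$, while the remaining indices trivially satisfy $|b(\zeta_i)| \le 1$. Hence $\max_{b\in B,\, \zeta \in V_{\CC}}|b(\zeta)| \le 2^{\Cn d^{n-1}\delta(d+\tau)}$.

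Substituting into the Frobenius bound gives $\|V\|_F^2 \le D^{2}\, 2^{2\Cn d^{n-1}\delta(d+\tau)}$, and since $\log D \le n \log d$ can be absorbed into the constant hidden in $\Cn$, we obtain
$$\h(\sigma_{\max}(V)) \;\le\; \h(\|V\|_F) \;\le\; \Cn d^{n-1}\delta(d+\tau),$$
which proves the first inequality. For the second, the identity $V^{T}U = \Id_{D}$ says $U = (V^{T})^{-1}$, so the singular values of $U$ are reciprocals of those of $V$; in particular $\sigma_{\min}(U) = 1/\sigma_{\max}(V)$ and $\h(\sigma_{\min}(U)) = -\h(\sigma_{\max}(V)) \ge -\Cn d^{n-1}\delta(d+\tau)$.

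The only delicate step is the passage from the summed height bound of \Cref{lem:htpzeta} to a pointwise bound on a single $|b(\zeta_i)|$; this works cleanly here because we apply it to monomials (for which $\h(b)=0$), so positive and negative contributions decouple and the upper bound on the sum immediately controls each positive term individually. Everything else is routine: Frobenius domination of $\sigma_{\max}$, and the singular-value identity for the inverse.
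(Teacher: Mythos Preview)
Your approach matches the paper's: bound $\sigma_{\max}(V)$ by the Frobenius norm, control $|b(\zeta)|$ pointwise via \Cref{lem:htpzeta} applied to the monomials $b\in B$ (which have $\h(b)=0$ and degree $\le\delta$), then use $V^{T}U=\Id_D$ to get $\sigma_{\min}(U)=1/\sigma_{\max}(V)$. One phrasing to tighten: the step ``$\sum_{-}\le 0$, so $\sum_{+}\le C$'' is not a valid deduction from the displayed inequality (from $\sum_{+}+\sum_{-}\le C$ with $\sum_{-}\le 0$ you only get $\sum_{+}\le C+|\sum_{-}|$). The clean fix---and what the paper does implicitly---is to recall that \Cref{lem:htpzeta} holds for \emph{any} subset $Z\subset V_{\CC}$; taking $Z=\{\zeta\}$ a singleton gives $\h(b(\zeta))\le \Cn d^{n-1}\delta(d+\tau)$ directly for each $\zeta$ with $b(\zeta)\ne 0$, so the splitting is unnecessary.
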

\begin{proof}

We have the well-known inequalities
$$
\sigma_{\max}(V) \le  \|V\|_F\le D \max\{ |\zeta^\alpha| \ : \ \bm x^\alpha\in B,\,  \zeta\in V_\CC\}  $$
where $\|\cdot \|_F$ denotes the Frobenius norm of matrix $V$. 
We conclude by applying  Lemma~\ref{lem:htpzeta} to the polynomials $\bf x^\alpha\in \ZZ[\bf x]$
which satisfy $\deg(\bm x^\alpha)\le \delta $ and $\h(\bm x^\alpha)=0\le \tau$:
$$\h(\sigma_{\max}(V))\le \log(D)+ \Cn d^{n-1}\delta (d+\tau) \le \Cn d^{n-1} \delta (d+\tau) 
$$ 
since $D\le d^n$.

\smallskip
\noindent As $U^T V= \mathrm{Id}$, we deduce the lower bound on $\sigma_{\min}(U)= (\sigma_{\max}(V))^{-1}$.
\end{proof}

We now  give an upper bound for the height of the idempotents $u_\zeta$, $\zeta \in V_\CC(I)$.
For that purpose, we use the following lemma.
\begin{lemma}\label{lem:sep} Let $I=(\bm h)$  be a zero-dimensional ideal defined by polynomials $h_1,\dots, h_s\in \ZZrng$ with  $\deg(h_j)\le d$ and $\h(h_j)\le \tau$. Then, for any  $i$, $1\le i\le n$,  and  $Z\subset V_\CC^2$ with  $\zeta_i\ne \xi_i$ for all $(\zeta,\xi)\in Z$ with $\zeta=(\zeta_1,\dots,\zeta_n)$ and $\xi=(\xi_1,\dots,\xi_n)$, one has
$$
\sum_{(\zeta,\xi)\in Z}\h(\zeta_i-\xi_i)\ge - \Cn d^{2n} (d+\tau).
$$
\end{lemma}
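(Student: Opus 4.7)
My plan is to apply a Rabinowitsch-style trick to convert the desired lower bound on $\sum \h(\zeta_i - \xi_i)$ into an upper bound on $\sum \h(1/(\zeta_i - \xi_i))$, to which I can apply \Cref{lem:htcoord}. Specifically, I would introduce fresh variables $\bm y = (y_1, \dots, y_n)$ and a new variable $t$, and consider the ideal
\begin{equation*}
J \ = \ \bigl( h_1(\bm x), \dots, h_s(\bm x),\ h_1(\bm y), \dots, h_s(\bm y),\ 1 - t\,(x_i - y_i)\bigr) \ \subset \ \ZZ[\bm x, \bm y, t]
\end{equation*}
in the polynomial ring with $2n+1$ variables. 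Since $V_\CC(I)$ is finite, $J$ is zero-dimensional and its vanishing set is
\begin{equation*}
V_\CC(J) \ = \ \{\, (\zeta, \xi, 1/(\zeta_i - \xi_i))\ :\ \zeta, \xi \in V_\CC,\ \zeta_i \ne \xi_i \,\}.
\end{equation*}

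Next, I would apply \Cref{lem:htcoord} to $J$. The $2s$ polynomials $h_j(\bm x), h_j(\bm y)$ have total degree $\le d$ and height $\le \tau$, while $1 - t(x_i - y_i)$ has total degree $2 \le d$ (assuming $d \ge 2$; the case $d=1$ is trivial since then $|V_\CC| \le 1$ and $Z = \emptyset$) and height $0$. This yields
\begin{equation*}
\sum_{(\zeta, \xi,\eta) \in V_\CC(J)} \h\!\bigl( \|(1,\zeta,\xi,\eta)\|_2 \bigr) \ \le \ {\cl C}(2n+1)\, d^{(2n+1)-2}\cdot d \cdot (d+\tau) \ \le \ \Cn\, d^{2n}(d+\tau),
\end{equation*}
after absorbing ${\cl C}(2n+1) = c(2n+1)\log(2n+2)$ into $\Cn = c\,n\log(n+1)$ by enlarging the hidden constant $c$.

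To finish, I would observe that $\|(1,\zeta,\xi,\eta)\|_2 \ge 1$ so every term on the left-hand side is nonnegative; hence restricting the sum to the subset of $V_\CC(J)$ corresponding to $Z$ preserves the inequality. Moreover $\|(1,\zeta,\xi,\eta)\|_2 \ge |\eta| = 1/|\zeta_i - \xi_i|$, so $\h(1/(\zeta_i - \xi_i)) \le \h(\|(1,\zeta,\xi,\eta)\|_2)$ for each pair in $Z$. Summing and negating yields the claimed bound. The only real obstacle is bookkeeping — checking zero-dimensionality of $J$, controlling the degree and height of its generators, and absorbing the change from $\Cn$ to ${\cl C}(2n+1)$ into the constant convention — all of which are routine.
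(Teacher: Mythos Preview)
Your proof is correct and is essentially the paper's argument unfolded: the paper applies \Cref{lem:htpzeta} to the ideal $J=(\bm h(\bm x),\bm h(\bm y))\subset\ZZ[\bm x,\bm y]$ and the polynomial $p=x_i-y_i$, and the lower-bound half of \Cref{lem:htpzeta} is itself proved by exactly the Rabinowitsch construction you write out, applied to \Cref{lem:htcoord}. You have simply inlined that intermediate lemma.
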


\begin{proof}
 We apply \cref{lem:htpzeta} to the zero-dimensional  ideal $J=(\bm h(\bm x),\bm h(\bm y))\subset \CC[\bm x,\bm y]$ which satisfies $V_\CC(J)=V_\CC^2$, and the polynomials $p_i(\bm x,\bm y)=x_i-y_i$, $1\le i\le n$, of degree 1 and height 0. 
\end{proof}

\begin{definition}[Interpolation polynomials]\label{def:intpol}
For each $\zeta\in V_\CC$, let $\varphi_\zeta\in \CCrng$ be  defined in the following  way:
    For each $\xi\ne \zeta$ pick a coordinate $\xi_{i_\xi}$ with $\xi_{i_\xi}\ne \zeta_{i_\xi}$ and let 
    $$
    \varphi_{\zeta}=\prod_{\xi\ne \zeta}\dfrac{x_{i_\xi}-\xi_{i_\xi}}{\zeta_{i_\xi}-\xi_{i_\xi}} \ \in \CCrng.
    $$ 
\end{definition}
We verify that $\varphi_\zeta(\zeta)=1$ and $\varphi_{\zeta}(\xi)=0$ for $\xi\ne \zeta$.

\begin{lemma}\label{lem:phizeta2} Let $(\varphi_\zeta)_{\zeta \in V_{\CC}(I)}$ be the interpolation polynomials of \Cref{def:intpol}. Then 
$$
\deg(\varphi_\zeta)=D-1 \quad \mbox{and}\quad \h(\varphi_{\zeta} )\le \Cn d^{2n} (d+\tau).
$$
\end{lemma}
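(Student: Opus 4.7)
The degree claim is immediate: $\varphi_\zeta$ is, by construction, a product of $D-1$ linear polynomials, so $\deg(\varphi_\zeta)\le D-1$, and it is exactly $D-1$ because the leading term is a non-zero product of monomials.

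For the height bound, the natural move is to separate the expression into a multivariate numerator and a scalar denominator:
\[
\varphi_\zeta(\bm x)\,=\,\frac{P_\zeta(\bm x)}{\Delta_\zeta}, \qquad
P_\zeta(\bm x):=\prod_{\xi\neq \zeta}(x_{i_\xi}-\xi_{i_\xi})\in\CCrng, \qquad
\Delta_\zeta:=\prod_{\xi\neq \zeta}(\zeta_{i_\xi}-\xi_{i_\xi})\in\CC^{*}.
\]
Since $\Delta_\zeta$ is a constant, $\h(\varphi_\zeta)=\h(P_\zeta)-\log|\Delta_\zeta|$, and it suffices to upper bound $\h(P_\zeta)$ and upper bound $-\log|\Delta_\zeta|$.

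For $\h(P_\zeta)$, expanding the product shows that every coefficient of $P_\zeta$ is a signed sum of at most $2^{D-1}$ monomials of the form $\prod_{\xi\in T}\xi_{i_\xi}$ for some $T\subset V_\CC\setminus\{\zeta\}$. Bounding each such product by $\prod_{\xi\neq\zeta}\max(1,|\xi_{i_\xi}|)$ and taking logarithms gives
\[
\h(P_\zeta)\,\le\,(D-1)+\sum_{\xi\neq\zeta}\log^{+}|\xi_{i_\xi}|\,\le\,(D-1)+\sum_{j=1}^n\sum_{\xi\in V_\CC}\log^{+}|\xi_j|.
\]
The inner sums are each controlled by applying \Cref{lem:htpzeta} to the polynomial $p=x_j$ (with $d_p=d$, $\tau_p=0$), yielding a bound of order $\Cn d^{n}(d+\tau)$ per variable, hence at most $\Cn d^{n}(d+\tau)$ in total after absorbing the factor $n$ into the constant $\cst(n)$. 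This is strictly smaller than the target $\Cn d^{2n}(d+\tau)$.

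The dominant and trickier contribution is $-\log|\Delta_\zeta|=-\sum_{\xi\neq\zeta}\h(\zeta_{i_\xi}-\xi_{i_\xi})$, which requires a lower bound on the separation of coordinates. Partitioning $V_\CC\setminus\{\zeta\}$ according to the chosen index $i_\xi\in\{1,\dots,n\}$ into subsets $S_j=\{\xi:i_\xi=j\}$, each set $Z_j:=\{(\zeta,\xi):\xi\in S_j\}\subset V_\CC^{\,2}$ satisfies the hypothesis $\zeta_j\neq\xi_j$ of \Cref{lem:sep}, so
\[
-\sum_{\xi\neq\zeta}\h(\zeta_{i_\xi}-\xi_{i_\xi})\,=\,-\sum_{j=1}^n\sum_{(\zeta,\xi)\in Z_j}\h(\zeta_j-\xi_j)\,\le\,n\,\Cn d^{2n}(d+\tau).
\]
Combining both bounds and absorbing multiplicative factors of $n$ and $\log$ terms into the constant in $\Cn$ gives the desired estimate $\h(\varphi_\zeta)\le \Cn d^{2n}(d+\tau)$. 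The main obstacle is the separation bound for the denominator: the numerator contribution is of order $d^{n}$, but the distance between distinct roots can be exponentially small, which forces the $d^{2n}$ term and fixes the order of the final estimate; this is precisely where \Cref{lem:sep}, and through it the arithmetic B\'ezout theorem applied to the product ideal $(\bm h(\bm x),\bm h(\bm y))$, does the heavy lifting.
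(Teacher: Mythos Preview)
Your proof is correct and follows essentially the same route as the paper: split $\varphi_\zeta$ into numerator and scalar denominator, bound the denominator via \Cref{lem:sep} after partitioning by the chosen coordinate index, and bound the numerator coefficients using the root-size estimates derived from \Cref{lem:htcoord}/\Cref{lem:htpzeta}. The only cosmetic difference is that the paper bounds the numerator via $(D-1)\max_\xi\log^+|\xi_i|$ whereas you use $\sum_\xi\log^+|\xi_{i_\xi}|$ directly; both are dominated by the $d^{2n}$ contribution from the denominator.
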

\begin{proof} The degree bound is obvious by the definition.

For the height bound,
the denominator $\prod_{\xi\ne \zeta}(\zeta_{i_\xi}-\xi_{i_\xi})$ of $\varphi_\zeta$ is composed by at most $n$ products of terms of the form 
    $$\prod_{(\zeta,\xi)\in Z_i} (\zeta_i-\xi_i),$$ 
    where $Z_i\subset V_\CC^2$ is the subset of $\xi\ne \zeta$ that we chose such that $i_\xi=i$, $1\le i\le n$.
   Each of these terms satisfies the lower bound of \Cref{lem:sep}. This implies the bound
    $$\sum_{\xi\ne \zeta}\h(\zeta_{i_\xi}-\xi_{i_\xi})\ge -\Cn d^{2n} (d+\tau).$$
    The coefficients of the  numerator of $\varphi_{\zeta}$ are the elementary symmetric polynomials $s_\alpha$ on the $\xi_{i_\xi}$, which satisfy 
$$|s_\alpha|\le 2^{D-1}\max\{1,|\zeta_i|\,:\,\zeta\in V_\CC\}^{D-1}$$ and therefore 
    $$\h(s_\alpha)\le (D-1)(1+\max\{\h(\zeta_i)\,:\,\zeta\in V_\CC\}\le \Cn d^{2n-1}(d+\tau)$$ by Lemma~\ref{lem:htcoord}. This implies that each of the interpolant $\varphi_{\zeta}$ of degree $D-1$ satisfies
    \[\h(\varphi_{\zeta})\le \Cn d^{2n} (d+\tau).\qedhere\]
\end{proof}

\begin{corollary} \label{cor:hu} Let $u_\zeta\in \langle B\rangle _\CC$, $\zeta \in V_\CC$, be the idempotents corresponding to $V_\CC$. Then 
$$ \h(u_\zeta)\le \Cnd d^{2n-1}\delta(d+\tau), \ \forall \, \zeta \in V_\CC.$$
    
\end{corollary}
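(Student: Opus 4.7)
The plan is to identify each idempotent $u_\zeta$ with the normal form of the interpolation polynomial $\varphi_\zeta$ from \Cref{def:intpol}, and then combine the height bound for $\varphi_\zeta$ given by \Cref{lem:phizeta2} with the reduction bound from \Cref{prop:Creduction'} in its complex form (\Cref{rem:complexreduction}).

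First I would recall that since $I$ is a radical zero-dimensional ideal, two polynomials that agree on all of $V_\CC$ are congruent modulo $I_\CC$. The idempotent $u_\zeta\in \langle B\rangle_\CC$ is, by definition, the unique element of $\langle B\rangle_\CC$ satisfying $u_\zeta(\zeta)=1$ and $u_\zeta(\xi)=0$ for every $\xi\in V_\CC\setminus \{\zeta\}$. The polynomial $\varphi_\zeta$ of \Cref{def:intpol} satisfies exactly the same interpolation conditions, so $\varphi_\zeta \equiv u_\zeta \mod I_\CC$. Since $u_\zeta\in \langle B\rangle_\CC$, this yields the identification
$$u_\zeta = \cl N(\varphi_\zeta).$$

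Next I would apply \Cref{prop:Creduction'}, in the complex variant stated in \Cref{rem:complexreduction}, to the polynomial $p = \varphi_\zeta \in \CCrng$. For this polynomial, \Cref{lem:phizeta2} gives $\deg(\varphi_\zeta) = D-1$, so $d_p = \max\{d, D-1\} \le d^n$ since $D\le d^n$ by the B\'ezout bound, and $\tau_p := \h(\varphi_\zeta) \le \Cn d^{2n}(d+\tau)$. Substituting into the bound of \Cref{rem:complexreduction} gives
$$\h(u_\zeta) = \h(\cl N(\varphi_\zeta)) \le \Cnd \, d^{n-1}(d^n\delta + d^n)(d+\tau) + \Cn \, d^{2n}(d+\tau).$$

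Finally I would absorb the lower-order terms into the dominant one. Using $\delta = \max\{d,\deg(B)\}\ge d\ge 1$, the term $d^n\delta + d^n \le 2\,d^n\delta$, so the first summand is at most $2\,\Cnd\, d^{2n-1}\delta(d+\tau)$; the second summand $\Cn\, d^{2n}(d+\tau) \le \Cn\, d^{2n-1}\delta(d+\tau)$ for the same reason. Adjusting the implied constant in $\Cnd$ produces
$$\h(u_\zeta) \le \Cnd \, d^{2n-1}\delta\,(d+\tau),$$
as claimed. The only subtlety is bookkeeping: ensuring that $d_p \le d^n$ (so that the $d^{n-1}d_p$ term in the reduction bound is absorbed by $d^{2n-1}\delta$) and that the contribution of the input height $\tau_p$ from \Cref{lem:phizeta2} does not dominate — both facts follow from $\delta\ge d$ and $D\le d^n$.
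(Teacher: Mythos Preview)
Your proof is correct and follows essentially the same route as the paper: identify $u_\zeta=\cl N(\varphi_\zeta)$, feed the degree and height bounds from \Cref{lem:phizeta2} into \Cref{rem:complexreduction}, and absorb the resulting terms into $\Cnd\, d^{2n-1}\delta(d+\tau)$ using $\delta\ge d$ and $D\le d^n$. The paper's only cosmetic difference is that it keeps $D$ rather than replacing it by $d^n$ in the intermediate step.
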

\begin{proof}
    This is because $u_\zeta=\cl N (\varphi_\zeta)$ with $\deg(\varphi_\zeta)=D-1$ and $\h(\varphi_\zeta)\le \Cnd d^{2n}(d+\tau)$. We  apply \Cref{rem:complexreduction} and obtain
 $$\h(u_\zeta)\le \Cnd d^{2n-1}\delta(d+\tau)+ \Cnd d^{n-1}D(d+\tau) +  \Cn
 d^{2n}(d+\tau)\le \Cnd d^{2n-1}\delta(d+\tau)$$ by adjusting the constant $\Cnd$, since $\delta \ge d$. 
\end{proof}

\section{Height bounds for $I$  radical and $f$ strictly positive on $S$}\label{sec:6}

In this section, we study height bounds for the SoS representation \eqref{eq:main_result} of a polynomial $f\in \ZZrng$ strictly positive on $S=S(\bm g, \bm h)$ as in \eqref{eq:s}, for the case when $I=(\bm h)$ is a radical zero-dimensional  ideal, and $\bm g, \bm h\subset \ZZrng$ with $\deg(g_i), \deg(h_j)\le d$ and $\h(g_i), \h(h_j)\le \tau$ for $1\le i\le r$, $1\le j\le s$.
We keep the notation that $B$ is a monomial basis of $\QQrng/I$ with $D:=|B|$ and $\delta:=\max\{d,\deg(B)\}$.


\subsection{The case when  $S=V_\RR(I)$}

Here we  present height bounds for an expression  \eqref{eq:main_result} for  an arbitrary polynomial $p\in \ZZrng$  in the assumptions of  \Cref{thm:radical}, i.e. besides being   $I$  a radical zero-dimensional ideal, $p>0$ on the whole $V_\RR(I)$. We observe that from Lemmas \ref{lem:sos:nempty} and \ref{lem:sos:empty}
one has 
\begin{equation}\label{eq:thetad}
p\equiv \sum_{\zeta\in V_\CC}\theta^2_\zeta \ \ \mod I_\RR \quad \mbox{with} \quad \sum_{\zeta\in V_\CC}\theta^2_\zeta = B (\Theta \, \Theta^t ) B^t,\end{equation}
 where $\Theta\in \RR^{D\times D}$ is the coefficient matrix of the basis $(\theta_\zeta)_{\zeta\in V_\CC}$ in the basis $B$ of $\QQrng/I$.

However, since  we only deal with {\em radical} zero-dimensional ideals, and therefore we will not perform the Hensel lifting described in \Cref{prop:sqrtNewton}, we do not need the assumption that there exists $\zeta_{0}\in V_{\CC}$  such that $\theta_{\zeta_{0}}(\zeta)\neq 0$  for all $\zeta\in V_{\CC}$  described in the output of \Cref{thm:radical}. This simplifies the description of the  polynomials $\theta_\zeta$ described in Lemmas \ref{lem:sos:nempty} and \ref{lem:sos:empty}. Indeed in this case, if $V_\CC=V_\RR \cup Z \cup \overline Z$, where $Z$ is a representative set of non-conjugate non-real roots on $V_\CC$, it is enough to take
\begin{align}\label{eq:simpl}\bullet & \quad  \theta_{\xi}=\sqrt{\omega_\xi}\,u_\xi \in \vspan{B}_{\RR} \ \mbox{ where } \omega_\xi:=p(\xi) \ \mbox{ for all } \xi \in V_\RR,\nonumber \\
\bullet & \quad 
     \theta_{\zeta}= \sqrt{\omega_\zeta}\Big(\Re( u_{\zeta}) - \dfrac{\Im(p(\zeta))}{ \lambda_\zeta +\Re(p(\zeta))}\, \Im( u_{\zeta})\Big)\in \vspan{B}_{\RR} \  \mbox{ where } \omega_\zeta:=2\,\big(\lambda_\zeta + \Re(p(\zeta))\big) \mbox{    for all } \zeta\in Z, \nonumber \\
\bullet & \quad 
 \theta_{\overline \zeta}= \sqrt{\omega_\zeta } \,\Im( u_{\zeta}) \in \vspan{B}_{\RR}  \  \mbox{ where } \omega_\zeta:=2\dfrac{{ \lambda_\zeta^2-|p(\zeta)|^{2}}}{\lambda_\zeta + \Re(p(\zeta))} \
 \ \mbox{ for all } \zeta\in Z, \end{align}
 with $\lambda_\zeta> |p(\zeta)| $ for all $\zeta\in Z$.

\begin{lemma}\label{lem:omegarho2}
Let $ \wf\in \ZZrng$ be such that  $\wf(\xi)>0$ for all $\xi\in V_\RR$ and $|\h( \wf(\zeta))| \le \otau$ for all $\zeta \in V_{\CC}$ such that $\wf(\zeta)\ne 0$. Given $\zeta\in V_\CC$, let  $\omega_\zeta\in \RR_{>0}$ be defined as in Identities \eqref{eq:simpl}  and given  $ \zeta\in Z$, set $\mu_{\zeta}:= - \dfrac{\Im({\wf}(\zeta))}{ \lambda_\zeta +\Re({\wf}(\zeta))-{\wf}(\xi_{0})}$.
 Then $$|\h(\omega_{\zeta})| \le \C1\otau,\ \forall\, \zeta\in V_\CC,\quad \mbox{and} \quad \h(\mu_{\zeta}) \le \C1\otau, \ \forall\, \zeta\in Z.$$
\end{lemma}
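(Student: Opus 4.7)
The plan is to prove each bound directly from the explicit formulas in \eqref{eq:simpl}, after fixing a concrete admissible choice of the parameters $\lambda_\zeta$. The only constraint on $\lambda_\zeta$ coming from the construction in \Cref{lem:sos:nempty}/\Cref{lem:sos:empty} is $\lambda_\zeta > |p(\zeta)|$ (so that the weighted SoS identity \eqref{eq:identity} produces strictly positive weights), so I will take, say, $\lambda_\zeta := 2|p(\zeta)|$ for every $\zeta\in Z$. With this choice, $|p(\zeta)|\le \lambda_\zeta+\Re(p(\zeta))\le 3|p(\zeta)|$ by the triangle inequality, which is the only estimate needed in the sequel.

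The real case is immediate: for $\xi\in V_\RR$ one has $\omega_\xi = p(\xi)>0$, so $|\h(\omega_\xi)| = |\h(p(\xi))|\le\otau$ by the hypothesis of the lemma.

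For $\zeta\in Z$, I would plug the choice $\lambda_\zeta = 2|p(\zeta)|$ into
\begin{equation*}
\omega_\zeta = 2\bigl(\lambda_\zeta + \Re(p(\zeta))\bigr),\qquad \omega_{\overline\zeta} = 2\,\frac{\lambda_\zeta^{2} - |p(\zeta)|^{2}}{\lambda_\zeta + \Re(p(\zeta))}.
\end{equation*}
The first is bounded between $2|p(\zeta)|$ and $6|p(\zeta)|$, and for the second the numerator equals $6|p(\zeta)|^{2}$, so the quotient lies between $2|p(\zeta)|$ and $6|p(\zeta)|$ as well. Taking logarithms and invoking the hypothesis $|\h(p(\zeta))|\le \otau$ yields $|\h(\omega_\zeta)|\le \otau + \log 6$ and $|\h(\omega_{\overline\zeta})|\le \otau + \log 6$, which is absorbed into $\C1\,\otau$ by adjusting the constant $\C1$.

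Finally, $\mu_\zeta = -\Im(p(\zeta))/\bigl(\lambda_\zeta + \Re(p(\zeta))\bigr)$ (dropping the $p(\xi_0)$ term, consistent with the simplification \eqref{eq:simpl}) satisfies $|\Im(p(\zeta))|\le |p(\zeta)|$ in the numerator and $|\lambda_\zeta + \Re(p(\zeta))|\ge |p(\zeta)|$ in the denominator, hence $|\mu_\zeta|\le 1$ and $\h(\mu_\zeta)\le 0 \le \C1\,\otau$. There is no real obstacle here; the only thing to be slightly careful about is to fix an explicit admissible $\lambda_\zeta$ (the statement does not prescribe one), and to note that any other choice with $\lambda_\zeta \asymp |p(\zeta)|$ would yield the same bounds up to absolute constants absorbed into $\C1$.
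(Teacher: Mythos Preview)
Your approach is essentially the same as the paper's: fix an explicit $\lambda_\zeta$, then read off the bounds on $\omega_\zeta$, $\omega_{\overline\zeta}$ and $\mu_\zeta$ directly from the formulas in \eqref{eq:simpl}. The paper takes $\lambda_\zeta\in(|p(\zeta)|+1,\ |p(\zeta)|+2)$ rather than your $\lambda_\zeta=2|p(\zeta)|$, and this difference actually matters in one corner case you overlooked.

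The hypothesis of the lemma only requires $p(\xi)>0$ for $\xi\in V_\RR$ and the height bound $|\h(p(\zeta))|\le\otau$ only for those $\zeta$ with $p(\zeta)\neq 0$; nothing prevents $p(\zeta)=0$ for some $\zeta\in Z$. In that case your choice gives $\lambda_\zeta=0$, which is not admissible (we need $\lambda_\zeta>|p(\zeta)|$ strictly, else $\omega_\zeta=0$ and $\omega_{\overline\zeta}$ is $0/0$), and your closing remark that ``any choice with $\lambda_\zeta\asymp|p(\zeta)|$ works'' fails for exactly the same reason. The paper's additive shift by $1$ guarantees $\lambda_\zeta+\Re(p(\zeta))>1$ uniformly, so that even when $p(\zeta)=0$ one gets $\omega_\zeta,\omega_{\overline\zeta}\in(2,4)$ and $\mu_\zeta=0$, all with bounded $\h$. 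The easy fix for your argument is to take, say, $\lambda_\zeta=2|p(\zeta)|+1$; then your estimates go through verbatim with slightly different absolute constants, still absorbed into $\C1$.
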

\begin{proof} 
By hypothesis, for all $\xi\in V_\RR$,  $|\h(\omega_{\xi})|= |\h( \wf(\xi))| \le\otau$.

For $\zeta\in Z$, choosing $\lambda_{\zeta}$ such that 
\begin{equation}\label{eq:lambda}
|{ \wf}(\zeta)|+1 < \lambda_{\zeta}<|{ \wf}(\zeta)|+2,
\end{equation}
we have 
$$
1 < \lambda_\zeta - |\Re({ \wf}(\zeta))|< \lambda_\zeta + \Re({ \wf}(\zeta))< 2\, |{\wf}(\zeta)| +2
$$
which implies that 
$|\h(\omega_{\zeta})|\le \C1\otau
$.
Similarly, 
$$
2\dfrac{{ \lambda_\zeta^2-|{ \wf}(\zeta)|^{2}}}{\lambda_\zeta + \Re({ \wf}(\zeta))}  
\ge  \frac{ \lambda_\zeta + |{ \wf}(\zeta)|}{ |{ \wf}(\zeta)| + 1}
\ge \frac{1} { |{ \wf}(\zeta)| + 1}
$$
and 
$$
2\dfrac{{ \lambda_\zeta^2-|{ \wf}(\zeta)|^{2}}}{\lambda_\zeta + \Re({ \wf}(\zeta))}  \le  4 (\lambda_\zeta + |{ \wf}(\zeta)|)  \le  8 (|{ \wf}(\zeta)|+1).
$$
We deduce that for all $\zeta\in Z$, 
$
|\h(\omega_{\bar{\zeta}})|\le \C1\otau
$.

The bound for $\mu_{\zeta}$ for $\zeta\in Z$   follows from the fact that  by \Cref{eq:lambda} 
$
|\mu_{\zeta}| \le { |{\wf}(\zeta)| }$, which implies that $\h(\mu_{\zeta})\le \C1\otau$.
\end{proof}

\begin{lemma}\label{cor:precround2}
Let $ \wf\in \ZZrng$ be such that  $\wf(\xi)>0$ for all $\xi\in V_\RR$ and $|\h( \wf(\zeta))| \le \otau$ for all $\zeta \in V_{\CC}$ such that $\wf(\zeta)\ne 0$.  Let $\widetilde Q:=\Theta  \Theta^t \in S^{D}(\RR) $ with $\Theta$ as in \Cref{eq:thetad} be such that $$\wf\equiv \ B \,\widetilde Q \,B^t\ \ \mod I_\RR.$$
Then
$$
\h(\widetilde Q)\le \Cnd d^{2n-1}\delta(d+\tau)+\C1  \otau \quad \mbox{and}\quad \h(\sigma_{\min}(\widetilde Q))\ge - \big( \Cn d^{n-1}\delta (d+\tau)+\C1\otau\big). 
$$ 
\end{lemma}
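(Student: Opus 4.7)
The plan is to factor the real matrix $\Theta = U M$, where $U \in \CC^{D \times D}$ has columns the coefficient vectors (in the basis $B$) of the idempotents $u_\zeta$, $\zeta \in V_\CC$, and $M \in \CC^{D \times D}$ is the change-of-basis matrix dictated by the explicit expressions for the $\theta_\zeta$ recalled in \eqref{eq:simpl}. I will focus on the case $V_\RR \ne \emptyset$ (the case $V_\RR = \emptyset$ being analogous, starting from the expressions in \Cref{lem:sos:empty}). Ordering $V_\CC = V_\RR \cup Z \cup \overline Z$ and pairing each $\zeta \in Z$ with $\overline\zeta$, the matrix $M$ is block-diagonal: it has $1 \times 1$ scalar blocks $\sqrt{\omega_\xi}$ for $\xi \in V_\RR$, and one $2 \times 2$ block
\[
M_\zeta = \begin{pmatrix} \sqrt{\omega_\zeta}(1+\ib\mu_\zeta)/2 & -\ib\sqrt{\omega_{\overline\zeta}}/2 \\ \sqrt{\omega_\zeta}(1-\ib\mu_\zeta)/2 & \ib\sqrt{\omega_{\overline\zeta}}/2 \end{pmatrix}
\]
for each $\zeta \in Z$, as obtained by substituting $\Re(u_\zeta) = (u_\zeta + u_{\overline\zeta})/2$ and $\Im(u_\zeta) = -\ib(u_\zeta - u_{\overline\zeta})/2$ into \eqref{eq:simpl}.

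For the upper bound on $\h(\widetilde Q)$, I would use $\widetilde Q = \Theta \Theta^t = U M (U M)^t$ to get $\h(\widetilde Q) \le 2(\h(U) + \h(M)) + 2\log D$. Then \Cref{cor:hu} gives $\h(U) \le \Cnd d^{2n-1} \delta (d+\tau)$, while \Cref{lem:omegarho2} (together with the positivity $\omega_\zeta > 0$ that makes $\sqrt{\omega_\zeta}$ well-defined) bounds each entry of $M$ by $\C1 \otau$, so after absorbing the logarithmic $\log D \le n \log d$ factor into the generic constant $\Cnd$, the claimed bound follows.

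For the lower bound on $\sigma_{\min}(\widetilde Q) = \sigma_{\min}(\Theta)^2$ (using that $\widetilde Q$ is positive semidefinite and square), I would combine the submultiplicativity $\sigma_{\min}(\Theta) \ge \sigma_{\min}(U)\, \sigma_{\min}(M)$ with the estimate $\h(\sigma_{\min}(U)) \ge -\Cn d^{n-1}\delta(d+\tau)$ provided by \Cref{lem:VdM}. Block-diagonality reduces the analysis of $\sigma_{\min}(M)$ to the individual blocks: the scalar blocks $\sqrt{\omega_\xi}$ are handled directly by \Cref{lem:omegarho2}, and for each $M_\zeta$ a short explicit computation yields $\det M_\zeta = \ib\sqrt{\omega_\zeta\omega_{\overline\zeta}}/2 \neq 0$; the elementary $2\times 2$ inequality $\sigma_{\min}(M_\zeta) \ge |\det M_\zeta|/\|M_\zeta\|_F$, combined with the entry-wise height bound on $M_\zeta$ coming from \Cref{lem:omegarho2}, gives $\h(\sigma_{\min}(M_\zeta)) \ge -\C1 \otau$. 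Taking the minimum over all blocks and combining with the bound on $\sigma_{\min}(U)$ yields the stated lower bound after adjusting the constants.

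The main obstacle I foresee is purely bookkeeping: one has to translate the explicit formulas in \eqref{eq:simpl} into the manifest block-diagonal structure of $M$ under a consistent ordering of $V_\CC$, verify that the same factorization strategy applies (with a slightly different $2\times 2$ block) in the $V_\RR=\emptyset$ case treated by \Cref{lem:sos:empty}, and carefully propagate the constants from \Cref{cor:hu}, \Cref{lem:VdM}, and \Cref{lem:omegarho2} through the products $U M$ and $\Theta \Theta^t$ so that every subleading contribution can be absorbed into the $\Cnd d^{2n-1}\delta(d+\tau)$ and $\C1\otau$ terms of the bounds. No new analytic input beyond these three results is needed.
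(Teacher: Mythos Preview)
Your proposal is correct and follows essentially the same strategy as the paper: factor $\Theta$ through the idempotent matrix $U$ and a block-diagonal change-of-basis matrix, then bound $\h(\widetilde Q)$ from above via \Cref{cor:hu} and \Cref{lem:omegarho2}, and $\sigma_{\min}(\widetilde Q)$ from below via \Cref{lem:VdM} and \Cref{lem:omegarho2}. The paper writes the factorization as $\widetilde Q = U P \Delta P^t U^t$ (separating your $M$ into $P\Delta^{1/2}$) and computes the eigenvalues of $P_\zeta^* P_\zeta$ explicitly rather than using your $|\det M_\zeta|/\|M_\zeta\|_F$ inequality, but these are cosmetic differences; note also that the simplified expressions \eqref{eq:simpl} already cover the case $V_\RR=\emptyset$ uniformly, so no separate appeal to \Cref{lem:sos:empty} is needed.
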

\begin{proof}
We have $$\widetilde Q= \Theta\,\Theta^t= UP\,\Delta \,P^t U^t$$
where, if $V_\CC=\{\xi_0,\dots,\xi_{k},\zeta_1,\overline \zeta_1,\dots,\zeta_\ell,\overline \zeta_\ell\}$
with $V_\RR=\{\xi_0,\dots,\xi_k\}$ and $Z=\{\zeta_1,\dots,\zeta_\ell\}$  a maximal set  of non-real  non-conjugate roots in $V_\CC$, 
$U\in \CC^{D\times D}$ is the coefficient  matrix of the interpolation polynomials $u_\zeta\in \langle B\rangle_\CC$, $\Delta$ is the diagonal matrix of the $\omega_\zeta$ defined in \Cref{eq:simpl}, for  $\zeta\in V_\CC$ ordered as stated above and 
$$
P =\left(
\begin{array}{ccccccccc}
1     &  \\
   & 1 & \\
 & &  \ddots\\
  & & & 1  \\
\begin{array}{c}
\\
\\
\end{array} & & & & &P_{\zeta_1}\\
& & & & & & \ddots \\
\begin{array}{c}
\\
\\
\end{array} 
& & & & & & & \ \ P_{\zeta_\ell}
\end{array}
\right)
$$
where for $\zeta\in Z$,
$$P_\zeta=\frac{1}{2\bm i}\left( \begin{array}{cc}
{\bm i  + \mu_{\zeta}}    &  1  \\
{\bm i  - \mu_{\zeta}}    & -1 
\end{array}\right).$$
By \Cref{cor:hu}, \Cref{lem:htcoord} and \Cref{lem:omegarho2}, we have $$\h(U)\le \Cnd d^{2n-1}\delta(d+\tau) \quad \mbox{and} \quad \h(P),\h(\Delta)\le \C1  \otau. $$
This implies, since $D\le d^n$,  that $$\h(\widetilde Q)\le \Cnd d^{2n-1}\delta(d+\tau) + \C1  \otau.$$
Moreover, since all matrices are invertible,
\begin{equation*}
\sigma_{\min}(\widetilde Q) \ge 
\sigma_{\min}(U)^2\, \sigma_{\min}(P)^2\sigma_{\min} (\Delta).
\end{equation*}
To compute $\sigma_{\min} (P)$, we verify that for each $\zeta\in Z$, $P_{\zeta}^* P_{\zeta} = 
\dfrac 1 2 \left( \begin{array}{cc}
1  + \mu_{\zeta}^2   & \mu_{\zeta} \\
\mu_{\zeta} & 1
\end{array}\right)$ so that its eigenvalues $0< \lambda_1\le  \lambda_2$ are such that 
$\lambda_1 + \lambda_2 = 1 + \dfrac{\mu_{\zeta}^2}{2}$ and $\lambda_1\, \lambda_2=\dfrac14$. Thus, 
$\frac12 > \lambda_1 \ge \frac 1 {4 + 2\mu_{\zeta}^2}$, which implies that  
  $\frac12>\sigma_{\min}(P)\ge \min\{1,  \sigma_{\min}(P_{\zeta_1}), \ldots, \sigma_{\min}(P_{\zeta_\ell}) \}
\ge \sqrt{2}\min \{ (2 +\mu_{\zeta}^2)^{-\frac 1 2}, \zeta \in Z\}$.\\
Applying \Cref{lem:omegarho2}, we deduce that 
\begin{equation*}
    0 > \h(\sigma_{\min}(P)) \ge - \C1 \otau.
\end{equation*}
We conclude  from 
\Cref{lem:VdM} and  \Cref{lem:omegarho2}  that 
\[
\h(\sigma_{\min}(\widetilde Q)) \ge - \big( \Cn d^{n-1}\delta (d+\tau)+\C1 \otau \big).\qedhere
\]
\end{proof}

\begin{proposition}\label{prop:Lf}
    Let $ \wf\in \ZZrng$ with $ d_p=\max\{d,\deg(\wf)\}$ and $\h( \wf)\le \wtau$. Define  the linear variety 
    $$
\cl L_{ \wf} = \{  Y\in S^D(\RR)\,:\, B Y B^t\equiv p \ \mod I_{\RR} 
\}.
$$ 
Let $  Q/\nu$, with $Q\in S^D(\ZZ)$ and $\nu\in \NN$, be a {\em positive definite matrix}  with $  \h(  Q),\h(\nu)\le \tau_Q$, which satisfies 
$$
\dist\big(\frac Q \nu , \cl L_{\wf}\big)<\dist\big(\frac Q \nu ,\Sigma\big),
$$ 
where    $\Sigma $ is the set of singular matrices in $\RR^{D\times D}$.
Then the orthogonal projection $ {Q_0}/{\nu_0}$ of $ Q/{\nu}$ on $\cl L_{\wf}$, with $Q_0 \in S^D(\ZZ)$ and $\nu_0\in \NN$,  which is a positive definite matrix,  satisfies that 
$$
  \h(Q_0),\h(\nu_0)\le \Cnd d^{n-1}(d^n\delta+d_p)(d+\tau) 
  + \tau_p + \tau_Q. 
$$  
\end{proposition}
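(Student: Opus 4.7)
The plan is to proceed in three steps.

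\emph{Step 1: Positive definiteness of $Q_0/\nu_0$.}
Consider the Euclidean segment in $S^D(\RR)$ joining $Q/\nu$ to $Q_0/\nu_0$. Its length equals $\dist(Q/\nu,\cl L_{\wf})$, which by hypothesis is strictly less than $\dist(Q/\nu,\Sigma)$. Hence every matrix along the segment is nonsingular. Since eigenvalues depend continuously on matrix entries and none of them can change sign without the matrix entering $\Sigma$, the inertia is constant along the segment; as $Q/\nu$ is positive definite, so is $Q_0/\nu_0$.

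\emph{Step 2: Encoding $\cl L_{\wf}$ as an integer linear system.}
For $Y=(y_{ij})\in S^D(\RR)$, one has $BYB^t=\sum_{i,j}y_{ij}\,b_ib_j$, whose class modulo $I$ is $\sum_{i,j}y_{ij}\,\cl N(b_ib_j)$. Equating this with $\cl N(\wf)$ in the basis $B$ yields $D$ linear conditions on the $D(D+1)/2$ independent entries of $Y$ which, after clearing a common denominator, form an integer linear system $A\vec{y}=\vec{b}$. Applying \Cref{prop:Creduction'} to each monomial $b_ib_j$ (of degree $\le 2\delta$, height $0$) and to $\wf$ (of degree $d_p$, height $\wtau$) bounds the heights of the entries of $A$ and $\vec{b}$ in terms of quantities of the same order as those in \Cref{prop:Creduction'}.

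\emph{Step 3: Cramer--Hadamard bound on the projection.}
Pruning $A$ to a full-rank matrix of size $D\times\tfrac{D(D+1)}{2}$, the orthogonal projection of $Q/\nu$ onto $\cl L_{\wf}$ has the closed form
\begin{equation*}
Y_0 \;=\; \tfrac{Q}{\nu} \;+\; A^t\,(AA^t)^{-1}\!\bigl(\vec{b}-A\tfrac{Q}{\nu}\bigr),
\end{equation*}
so the Lagrange multipliers $\vec\lambda:=(AA^t)^{-1}(\vec b - A\,Q/\nu)$ solve a square $D\times D$ integer linear system, whose coefficients and right-hand side have heights controlled by Step~2 together with the hypothesis $\h(Q),\h(\nu)\le\tau_Q$. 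Applying \Cref{rem:HadamardD} to this system, and then recombining to recover $Y_0$, yields integer numerators and a common denominator whose heights match the claimed expression $\Cnd d^{n-1}(d^n\delta+d_p)(d+\tau)+\wtau+\tau_Q$.

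The main obstacle I expect is the careful bookkeeping in Step~3: a crude Hadamard bound on $AA^t$ multiplies $\h(A)$ by a factor of $D\sim d^n$, which would overshoot the target. Matching the stated bound requires exploiting the product structure of $A^t(AA^t)^{-1}$ --- so that each entry of $A$ only enters via a cofactor expansion of controlled size --- and absorbing all the polylog factors $\log D, \log N$ into the constant $\Cnd$, exactly as done in the derivations of \Cref{lem:rur-height} and \Cref{prop:Creduction'}.
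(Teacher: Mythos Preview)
Your Step~1 is fine (and more explicit than the paper, which leaves this implicit). Step~3 is also essentially the content of \Cref{lem:projection}, so the structure is right. The gap is in Step~2, and it is exactly the source of the overshoot you flag in your last paragraph, not some subtlety in Step~3.

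You encode $\cl L_{\wf}$ via the normal form $\cl N$ in the basis $B$ and invoke \Cref{prop:Creduction'}. But $\cl N$ lands in $\vspan{B}_{\QQ}$, not $\vspan{B}_{\ZZ}$: computing $\widehat{\cl N}(b_ib_j)$ already costs you a Cramer step of size $D$, so by \Cref{prop:Creduction'} the integer entries of your matrix $A$ have height of order $\Cnd d^{n-1}(d^n\delta+2\delta)(d+\tau)\sim\Cnd d^{2n-1}\delta(d+\tau)$. When you then apply the projection bound (which legitimately multiplies $\h(A)$ by another factor $\sim D\le d^n$), you land at $\Cnd d^{3n-1}\delta(d+\tau)$, one power of $d^n$ too many. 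No amount of reorganising the product $A^t(AA^t)^{-1}$ will recover this: the extra factor is baked into $\h(A)$ before you ever take a determinant.

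The paper avoids this by encoding $\cl L_{\wf}$ through the \emph{univariate} map $\cl U$ of \eqref{eq:defN2} rather than $\cl N$. Since $\ker\cl U=I$, one has $Y\in\cl L_{\wf}\iff\sum_{i,j}\cl U(b_ib_j)\,y_{i,j}=\cl U(\wf)$, and by \Cref{lem:height-N} the coefficients $\cl U(b_ib_j)\in\vspan{1,\dots,t^{D-1}}_{\ZZ}$ are \emph{already integers} with $\h(\cl U(b_ib_j))\le\Cnd d^{n-1}\delta(d+\tau)$ --- one $d^n$ factor smaller than your $\h(A)$. Similarly $\h(\cl U(\wf))\le\Cnd d^{n-1}d_p(d+\tau)+\tau_p$ by \Cref{cor:height-N'}. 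Feeding these directly into \Cref{lem:projection} (your Step~3) then gives exactly the claimed bound, the single factor of $D$ from the projection producing the $d^n\delta$ term. So the fix is one line: replace $\cl N$ by $\cl U$ in Step~2 and cite \Cref{lem:height-N} and \Cref{cor:height-N'} instead of \Cref{prop:Creduction'}.
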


\begin{proof}
Set $Y=(y_{i,j})\in S^D(\RR)$.   We have 
$$ Y\in \cl L_{\wf} \ \iff \ {\cl U}(\wf - B Y B^t)=0 \ \iff \
\sum_{i,j} {\cl U}(b_i b_j)\, y_{i,j} ={\cl U}(\wf),
$$
where $B=\{b_1, \ldots, b_D\}$ is the basis of $\QQrng/I$, and $\cl U$ is the map defined in \eqref{eq:defN2}.
Thus, $\cl L_{\wf}$ is defined by a linear system of equations of the form $\bm A \bm y= \bm b $ where $\bm y$ is the vector of $N=  D (D+1)/2$ unknown coefficients of $Y\in S^D(\RR)$, $\bm A\in \ZZ^{ M\times N}$ is given by the coefficients of ${\cl U}(b_i\,b_j)$ in the basis $\{1, t, \ldots, t^{D-1}\}$ for $b_i,b_j\in B$ and  $\bm b\in \ZZ^{ M}$ is given by the coefficients of $\cl U(\wf)$ (here we can assume that $\bm A$  is of maximal rank $M\le D$ by  deleting  its superfluous linear dependent rows). Since by \Cref{lem:height-N} and \Cref{cor:height-N'}  we have  
$$
\h(\bm A) \le  \Cnd d^{n-1} \delta (d+\tau) \quad\mbox{and}\quad 
\h(\bm b) \le \Cnd d^{n-1}d_p (d+\tau) +  \tau_p,
$$
we conclude by applying \Cref{lem:projection} that the projection $ Q_0/\nu_0$ of $Q/\nu$ on 
$\cl L_{p}$ satisfies 
\[
\h( Q_0),\h(\nu_0)\le \Cnd d^{n-1}(d^{n}\delta+ d_p)(d+\tau) +  \tau_p 
+ \tau_Q. \qedhere
\]
\end{proof}

In order to round real polynomials to obtain polynomials in $\QQrng$, we will need the following rounding remark.
\begin{remark}\label{rem:rounding}
   Let $\displaystyle \widetilde \xi=\sum_{k\le M-1}a_{k}2^{k}\in \RR_{>0}$ for some $M\in \NN$ with  $a_k\in \{0,1\},\ \forall \,k$, be the binary expansion of a positive real number with $\h(\xi)\le M$, and  let  $$\frac{\xi}{2^N}:= \sum_{-N\le k\le M-1}a_{k}2^{k} \ \in \QQ \quad \mbox{with} \quad  \xi:=\sum_{0\le k\le N+M-1}a_{k-N}2^k \ \in \ZZ$$ be the truncation of  $\widetilde\xi$ to  $N$ digits after the comma.
   Then $$2^N\widetilde \xi-\xi < 1 \quad \mbox{ and } \quad \h( \xi)\le N+M.$$
   In this case, we say that we round the real number  $\widetilde \xi$ to a rational number $ \xi /2^N$, with $\widehat \xi\in \NN$, at precision $2^{-N}$.
\end{remark}

All this preparation allows us to prove our main theorem for the case when $I\subset \QQrng$ is a radical zero-dimensional ideal and $\wf\in \ZZrng$ satisfies $\wf>0$ on $V_\RR(I)$. We recall that $I$ is generated by polynomials of degree $\le d$ and height $\le \tau_p$, and that $B$ is the monomial basis of $\QQrng/I$ and $\delta=\max\{d,\deg(B)\}$.

\begin{theorem} \label{thm:Thethm} Let  $p \in \ZZrng $  be  such that $p >0$ on $V_\RR(I)$. 
Let $d_p:=\max\{d,\deg(p)\} $, $\h(p)\le \wtau$  and 
$|\h(p(\zeta))| \le  \otau$ for all $\zeta \in V_{\CC}$ such that $ p(\zeta)\ne 0$.  Then there exists a positive definite matrix $Q_0\in S^D(\ZZ)$ and $\nu_0\in \NN$ such that 
\begin{equation*} 
p \equiv  \dfrac{1}{\nu_0}B Q_0 B^t \ \mod \ I 
\end{equation*}
where    
$$ 
\h(  Q_0) , \h(\nu_0)\le  \Cnd d^{n-1}(d^n\delta+ d_p)(d+\tau)+ \C1 \otau  + \wtau.
$$ 
\end{theorem}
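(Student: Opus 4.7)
The plan is to construct the integer certificate in three steps. First, I use Lemmas \ref{lem:sos:nempty} and \ref{lem:sos:empty} to produce the real representation $p \equiv B \widetilde Q B^t \mod I_\RR$ with $\widetilde Q = \Theta \Theta^t$, which \Cref{cor:precround2} controls quantitatively via
\[
\h(\widetilde Q) \le \Cnd d^{2n-1} \delta (d+\tau) + \C1 \otau, \qquad \h(\sigma_{\min}(\widetilde Q)) \ge -\bigl(\Cn d^{n-1} \delta (d+\tau) + \C1 \otau\bigr).
\]
In particular $\widetilde Q$ is a positive definite element of $\cl L_p$ that is quantitatively far from the set $\Sigma$ of singular matrices.

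Second, I round $\widetilde Q$ entrywise at precision $2^{-N}$ using \Cref{rem:rounding}, producing $Q/\nu \in S^D(\QQ)$ with $Q \in S^D(\ZZ)$, $\nu = 2^N$, and $\h(Q), \h(\nu) \le \tau_Q := N + \h(\widetilde Q)$. Since the Frobenius norm of the rounding error is at most $D \cdot 2^{-N}$ and $\widetilde Q \in \cl L_p$, both $|\sigma_{\min}(Q/\nu) - \sigma_{\min}(\widetilde Q)|$ and $\dist(Q/\nu, \cl L_p)$ are bounded by $D \cdot 2^{-N}$. Choosing $N$ minimally so that $2 D \cdot 2^{-N} < \sigma_{\min}(\widetilde Q)$, which can be achieved with $N \le \Cn d^{n-1} \delta (d+\tau) + \C1 \otau$ after absorbing the $O(\log D)$ contribution into the constant, simultaneously guarantees that $Q/\nu$ is positive definite and that $\dist(Q/\nu, \cl L_p) < \sigma_{\min}(Q/\nu) = \dist(Q/\nu, \Sigma)$, the last equality being the Eckart--Young characterisation of the distance to singular matrices.

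Third, I apply \Cref{prop:Lf} to this $Q/\nu$ to obtain the orthogonal projection $Q_0/\nu_0$ onto $\cl L_p$. This projection remains positive definite (because it lies strictly closer to $Q/\nu$ than any singular matrix does), satisfies $p \equiv \tfrac{1}{\nu_0} B Q_0 B^t \mod I$ by definition of $\cl L_p$, and inherits the bound
\[
\h(Q_0), \h(\nu_0) \le \Cnd d^{n-1}(d^n \delta + d_p)(d+\tau) + \wtau + \tau_Q
\]
from \Cref{prop:Lf}. Since $\tau_Q \le \Cnd d^{2n-1} \delta (d+\tau) + \C1 \otau$ and $d^{n-1}(d^n \delta + d_p) \ge d^{2n-1} \delta$, the $\tau_Q$ contribution is absorbed into the first summand after a mild adjustment of the constant hidden in $\Cnd$, yielding exactly the stated bound.

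The delicate point is the coordination of the rounding precision $N$: it must be large enough to secure the strict-feasibility hypothesis of \Cref{prop:Lf} (so that the orthogonal projection preserves positive definiteness), yet small enough that $\tau_Q = N + \h(\widetilde Q)$ stays within the target budget. The bounds supplied by \Cref{cor:precround2} make these two requirements compatible, as the lower bound on $\h(\sigma_{\min}(\widetilde Q))$ and the upper bound on $\h(\widetilde Q)$ are of the same order up to the factor $d^n$ already present in the final estimate.
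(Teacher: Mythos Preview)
Your proof is correct and follows essentially the same three-step approach as the paper: invoke \Cref{cor:precround2} for the real $\widetilde Q$, round at precision governed by $\sigma_{\min}(\widetilde Q)$, then project via \Cref{prop:Lf} and absorb $\tau_Q$ into the leading term. If anything, you are slightly more careful than the paper in making explicit the factor of~$2$ in the precision needed to guarantee the strict-feasibility hypothesis of \Cref{prop:Lf}, and in invoking Eckart--Young for $\dist(Q/\nu,\Sigma)=\sigma_{\min}(Q/\nu)$.
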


\begin{proof}
Let  $\widetilde Q=\Theta\,\Theta^t\in S^{D}(\RR)$ be the positive definite matrix constructed in \Cref{cor:precround2}, which satisfies that 
$$
p \equiv B \widetilde Q B^t \  \mod I_{\RR},
$$
with 
$$ 
\h(\widetilde Q)\le \Cnd d^{2n-1}\delta(d+\tau)+\C1 \otau \quad \mbox{and} \quad \h(\sigma_{\min}(\widetilde Q))\ge - \big( \Cn d^{n-1}\delta (d+\tau)+\C1\otau\big).$$
By \Cref{rem:rounding} we now  round $\widetilde Q\in S^D(\RR)$ to $ 2^{-N} Q$ with $ Q\in  S^D(\ZZ)$ and $N\in \NN$ with  $\h(N)= \Cn d^{n-1}\delta (d+\tau)+\C1\otau$ so that $\dist(\widetilde Q,2^{-N} Q)< \sigma_{\min}(\widetilde Q)$
 and $$\h({Q})\le \Cnd d^{2n-1}\delta(d+\tau)+\C1 \otau.$$
By \Cref{prop:Lf}, the projection $ Q_0/\nu_0$ with $ Q_0\in S^D(\ZZ)$  and $\nu_0\in \NN$ of $Q/\nu$ on  the linear variety 
$$\cl L_{ p} = \{ Y\in \cl S^D(\RR)\,:\, f \equiv  B Y B^t \ \mod I_{\RR} 
\}
$$ is  a positive definite matrix which satisfies
\begin{align*}\h(Q_0), \h(\nu_0)&\le \Cnd d^{n-1}(d^n\delta+d_p)(d+\tau) + \wtau+ \Cnd d^{2n-1}\delta(d+\tau)+\C1 \otau\\ &\le \Cnd d^{n-1}(d^n\delta+d_p)(d+\tau) + \C1 \otau + \wtau,
\end{align*}
by adjusting the term $\Cnd$.
\end{proof}

\subsection{The case when $f>0$ on $S$ for $S\subset V$}

\begin{lemma}\label{lem:transf} Let $S$ be as in \eqref{eq:s}  and let $p\in \ZZrng$  with $\h(p)\le \tau_p$ be such that $p(\xi)>0$ for all $\xi\in S$ and $|\h(p(\zeta))|\le \eta_p$ for all $\zeta\in V_\CC$ such that $p(\zeta)\ne 0$. 
Set $d_p:=\max\{d,\deg(p)\}$.  Then 
there exist $\nu_1\in \NN$ and for all $\xi\in V_\RR\setminus S$, there exist  $\rho_{\xi}\in \NN$,  $\widehat u_{\xi}\in \langle B\rangle_\ZZ$ and $0\le i_\xi\le r$ such that 
$$
\widehat p= \nu_1 p -\sum_{\xi\in V_\RR\setminus S}\rho_{\xi}\widehat u_{\xi}^2  g_{i_\xi}  \ \in \,\ZZrng
$$
satisfies 
\begin{itemize}
    \item 
 $\widehat p(\zeta)>0$ for all $\zeta \in V_\RR$,
\item $h(\nu_1) \le \Cn d^{n-1}\delta(d+\tau)+\eta_p $ and 
$\h({\rho}_\xi) \le \Cn d^{n}(d+\tau)+\eta_p$, $\forall\, \xi\in V_\RR\setminus S$,
\item $\h( \widehat u_\xi)\le \Cnd d^{2n-1}\delta(d+\tau)+\eta_p,$ $\forall\, \xi\in V_\RR\setminus S$,
\item
$ |\h(\widehat p(\zeta))|\le \widehat\eta_p:=\Cn d^{n-1}\delta (d+\tau)+\eta_p$   \  for all  $\zeta \in V_\CC$ such that $\ p(\zeta)\ne 0$, 
\item $\deg(\widehat p) \le \widehat d_p:=\max\{d_p,d + 2\delta \}$ and $ \h(\widehat p)\le \widehat\tau_p:=\Cnd d^{2n-1}\delta(d+\tau)+2\eta_p+\tau_p$.
\end{itemize}
\end{lemma}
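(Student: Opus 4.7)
The plan is to adapt the perturbation construction of \Cref{lem:perturb} to $\ZZ$-coefficients, carefully tracking heights using the estimates of \Cref{sec:heightradical}. For each $\xi \in V_\RR \setminus S$ I first pick $i_\xi \in \{1, \ldots, r\}$ with $g_{i_\xi}(\xi) < 0$, which exists by definition of $S$. The real idempotents $u_\xi \in \vspan{B}_\RR$ satisfy $\h(u_\xi) \le \Cnd d^{2n-1}\delta(d+\tau)$ by \Cref{cor:hu}, and \Cref{lem:htpzeta} applied to $g_{i_\xi}$ gives the lower bound $|g_{i_\xi}(\xi)| \ge 2^{-\Cn d^n(d+\tau)}$. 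I then choose a common integer denominator $\nu_1 \in \NN$ (a power of $2$, with $\h(\nu_1)$ calibrated as described below), round $\nu_1 u_\xi$ coefficient-wise to the nearest integer polynomial $\widehat u_\xi \in \vspan{B}_\ZZ$, and choose appropriate positive integers $\rho_\xi$ for $\xi \in V_\RR \setminus S$. I will set $\widehat p := \nu_1 p - \sum_{\xi \in V_\RR \setminus S} \rho_\xi \widehat u_\xi^2 g_{i_\xi} \in \ZZrng$ and verify the five conditions of the statement.

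For the calibration: the rounding error satisfies $|(\widehat u_\xi - \nu_1 u_\xi)(\zeta)| \le \|B(\zeta)\|_1 := \sum_{b \in B}|b(\zeta)|$ for any $\zeta \in V_\CC$, and the right-hand side has height at most $\Cn d^{n-1}\delta(d+\tau)$ by \Cref{lem:htpzeta} applied to each monomial of $B$. Choosing $\nu_1$ with $\h(\nu_1) \le \Cn d^{n-1}\delta(d+\tau) + \eta_p$ therefore ensures that $\widehat u_\xi(\zeta)/\nu_1$ approximates $u_\xi(\zeta) = \delta_{\xi, \zeta}$ (for $\zeta \in V_\RR$) up to additive error of order $2^{-\eta_p}$. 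The height bound $\h(\widehat u_\xi) \le \Cnd d^{2n-1}\delta(d+\tau) + \eta_p$ follows from $\h(\widehat u_\xi) \le \h(\nu_1) + \h(u_\xi) + O(1)$. For each $\xi \in V_\RR \setminus S$ I then take $\rho_\xi$ as the smallest positive integer for which the self-term $-\rho_\xi \widehat u_\xi(\xi)^2 g_{i_\xi}(\xi) \simeq \rho_\xi \nu_1^2 |g_{i_\xi}(\xi)|$ dominates $\nu_1 |p(\xi)|$ and the sum of cross-term contributions appearing in $\widehat p(\xi)$; using $|p(\xi)| \le 2^{\eta_p}$ together with the lower bound on $|g_{i_\xi}(\xi)|$, this yields $\h(\rho_\xi) \le \Cn d^n(d+\tau) + \eta_p$.

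Positivity of $\widehat p$ on $V_\RR$ then follows case-by-case: at $\xi \in V_\RR \setminus S$ the self-term dominates by design, while at $\xi \in S$ all cross terms are smaller than $\nu_1 p(\xi)$ thanks to the $2^{-\eta_p}$ rounding precision, leaving $\nu_1 p(\xi) > 0$ as dominant. The degree bound $\deg(\widehat p) \le \max\{d_p, d + 2\delta\}$ and the height bound $\h(\widehat p) \le \Cnd d^{2n-1}\delta(d+\tau) + 2\eta_p + \tau_p$ follow from the triangle inequality applied to the defining sum, combined with the already-established height bounds on $\nu_1$, $\rho_\xi$, $\widehat u_\xi$, and the inputs. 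The bound $|\h(\widehat p(\zeta))| \le \widehat\eta_p$ for complex $\zeta$ with $p(\zeta)\ne 0$ is obtained by comparing $\widehat p(\zeta)$ with $\nu_1 p(\zeta)$: the upper bound from the triangle inequality, and the lower bound from the rounding precision, which keeps the cross-term contributions below $\nu_1 |p(\zeta)|$.

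The main obstacle is the simultaneous balancing of $\nu_1$: it must be \emph{large enough} that the rational approximations $\widehat u_\xi / \nu_1$ are accurate to precision $2^{-\eta_p}$ at every $\zeta \in V_\CC$ --- not only at points of $V_\RR$ --- so that both positivity of $\widehat p$ on $V_\RR$ and the lower bound $|\widehat p(\zeta)| \ge 2^{-\widehat\eta_p}$ for complex $\zeta$ hold; yet $\nu_1$ must simultaneously satisfy $\h(\nu_1) \le \Cn d^{n-1}\delta(d+\tau) + \eta_p$. This balance is exactly afforded by the height bound on monomial evaluations $\|B(\zeta)\|_1$ coming from \Cref{lem:htpzeta}.
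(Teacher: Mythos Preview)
Your overall strategy matches the paper's --- perturb via integer-rounded idempotents and control all heights through \Cref{lem:htpzeta} and \Cref{cor:hu}. The gap is a scaling mismatch. You round $\nu_1 u_\xi$ to $\widehat u_\xi$, so $\widehat u_\xi(\xi)\approx \nu_1$ and hence the self-term $\rho_\xi\,\widehat u_\xi(\xi)^2\,|g_{i_\xi}(\xi)|$ has height at least $2\h(\nu_1)-\Cn d^n(d+\tau)$ for \emph{any} positive integer $\rho_\xi$. Consequently, at each $\xi\in V_\RR\setminus S$ with $p(\xi)\ne 0$ one gets $\h(\widehat p(\xi))\gtrsim 2\h(\nu_1)$; with your calibration $\h(\nu_1)=\Cn d^{n-1}\delta(d+\tau)+\eta_p$ this overshoots the claimed bound $|\h(\widehat p(\zeta))|\le \widehat\eta_p=\Cn d^{n-1}\delta(d+\tau)+\eta_p$, and no choice of integer $\rho_\xi$ repairs it.

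The paper's device is to take $\nu_1=2^{2N}$ and to round $2^N u_\xi$ (not $\nu_1 u_\xi$) to $\widehat u_\xi$; then $\widehat u_\xi^2\approx \nu_1 u_\xi^2$ and $\widehat p\approx \nu_1\,\widetilde p$ with $\widetilde p:=p-\sum_\xi\rho_\xi u_\xi^2 g_{i_\xi}$, so that $\h(\widehat p(\zeta))\approx \h(\nu_1)+\h(\widetilde p(\zeta))$ rather than $2\h(\nu_1)$. Furthermore, the paper fixes the $\rho_\xi$ \emph{before} rounding, using the exact real idempotents $u_\xi$ (for which all cross-terms $u_\xi(\xi')=0$ vanish identically), and only afterwards chooses $N$ large enough to absorb the rounding errors; your order (round first, then choose each $\rho_\xi$ to dominate cross-terms that themselves depend on the other $\rho_{\xi'}$) introduces a simultaneous constraint you do not address.
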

\begin{proof}
We first note that by \Cref{lem:htpzeta},  we have that for all $\zeta \in V_\CC$ such that  $g_{i}(\zeta)\ne 0$,
\begin{equation}\label{eq:bound} |\h(g_{i}(\zeta))|\le \Cn d^{n}(d+\tau).\end{equation}
We proceed as in the proof of \Cref{thm:epss:finite}, which applies 
 Lemma~\ref{lem:perturb}. There  we defined $\widetilde p$ as 
$$ 
\widetilde p:=p-\sum_{\xi\in V_\RR\setminus S} \rho_{\xi}    u_\xi^{2} \, g_{i_{\xi}}  \ \in \ \RR[\bm x]$$
where for $\xi \in V_\RR\setminus S$, ${i_\xi}$ is such that $g_{i_\xi}(\xi)<0$, $ u_{\xi}\in \vspan{ B}_{\RR}$ is the idempotent associated to $\xi$  and $\rho_\xi$ is chosen such that $p(\xi)- \rho_\xi g_{i_\xi}(\xi)> 0$, in the following way.

If $p(\xi)>0$ we can take $\rho_\xi=0$, and if $p(\xi)=0$ we can take $\rho_\xi=1$. 
Now if 
$p(\xi)<0$ we need to choose $\rho_\xi$ such that 
$\rho_\xi> \dfrac{p(\xi)}{g_{i_\xi}(\xi)}$. 
By \cref{eq:bound}, we can take ${\rho}_\xi\in \NN$ with $\h({\rho}_\xi) =\Cn d^{n}(d+\tau)+\eta_p$ for some adjusted $\Cn$ so that $\rho_\xi> 2\dfrac{p(\xi)}{g_{i_\xi}(\xi)}$ and therefore
$$
p(\xi)-\rho_\xi g_{i_\xi}(\xi)>-p(\zeta) \quad \mbox{and} \quad \h\big(p(\xi)-\rho_\xi g_{i_\xi}(\xi)\big)\ge -\eta_p.
$$
Therefore, for all  $\zeta \in V_\RR$, $\widetilde p(\zeta)>0$ since for $\zeta\in S$, $\widetilde p(\zeta)=p(\zeta)$  while for $\zeta\in V_\RR\setminus S$,
$$ \widetilde p(\zeta) = p(\zeta)-\rho_\zeta g_{i_\zeta}(\zeta).$$ Moreover, for all $\zeta\in V_\RR$ we have $
\h\big(\widetilde p(\zeta)\big)\ge -\eta_p, 
$ and 
$$
\h\big(\widetilde p(\zeta)\big)=\h\big(p(\zeta)- \rho_{\zeta} g_{i_{\zeta}}(\zeta)\big)\le  \log(2) + \max\{\h(p(\zeta)), \h(\rho_\zeta)+\h(g_{i_\zeta}(\zeta))\} \le \Cn d^{n} (d+\tau)+\eta_p.
$$
We deduce more generally that   for all $\zeta\in V_\CC$ such that $p(\zeta)\ne 0$ we have  
$$
|\h(\widetilde p(\zeta))|\le \Cn d^{n} (d+\tau)+\eta_p.
$$
Now, for all $\xi\in V_\RR\setminus S$, we  round the $\delta$ coefficients  $u_{\xi,\alpha}$ of  $u_\xi=\sum_\alpha u_{\xi,\alpha}\bm x^\alpha \in \langle B\rangle_\RR$ to $\widehat u_{\xi,\alpha}/2^{N}$ with $\widehat u_{\xi,\alpha}\in \ZZ$ up to precision $2^{-{N}}$, for some $N\in \NN$, so that $|2^N u_{\xi,\alpha}-\widehat u_{\xi,\alpha}|<1$ and $\h(\widehat u_{\xi,\alpha})\le N+\h(u_{\xi,\alpha})$ as in the \Cref{rem:rounding}, i.e. $\h(\widehat u_{\xi})\le N+\h(u_{\xi})$.

We observe that for all $\zeta\in V_\CC$, as in the proof of \cref{lem:VdM}, we have 
$$
\h((2^Nu_\xi -\widehat u_\xi)(\zeta))\le \h(\sum_\alpha |2^Nu_{\xi,\alpha}-\widehat u_{\xi,\alpha}|\,|\zeta|^\alpha))\le  \log (D) + \Cn d^{n-1}\delta(d+\tau)\le \Cn d^{n-1}\delta(d+\tau)
$$
Furthermore,
$$|(2^Nu_\xi +\widehat u_\xi)(\zeta)|\le 2^{N+1}|u_\xi(\zeta)|+|(\widehat u_\xi-2^Nu_\xi)(\zeta)|\le 2^{N+1}+|(\widehat u_\xi-2^Nu_\xi)(\zeta)|$$ implies that 
\begin{align*}\h\Big(\big((2^Nu_\xi)^2-\widehat u_\xi^2\big)(\zeta)\Big)  &\le  \log\big(2^{N+1}|u_\xi(\zeta)|+|(\widehat u_\xi-2^N u_\xi)(\zeta)|\big)+ \h\big((2^Nu_\xi-\widehat u_\xi)(\zeta)\big) \\
& \le  \log\big(2^{N+1}+|(\widehat u_\xi-2^N u_\xi)(\zeta)|\big)+ \h\big((2^Nu_\xi-\widehat u_\xi)(\zeta)\big) \\ &\le N+2 + \Cn
d^{n-1}\delta(d+\tau)\end{align*} 
if we take  $N+1\ge \Cn d^{n-1}\delta(d+\tau)\ge \h((2^Nu_\xi -\widehat u_\xi)(\zeta))$.
We then define 
$$ \nu_1 := 2^{2N}, \quad \widehat p:=\nu_1 p-\sum_{\xi\in V_\RR-S} \rho_\xi \widehat u_\xi^2 g_{i_\xi}  \ \in \ZZrng.$$ We have   that for all $\zeta\in V_\RR$,

\begin{align*}
\widehat p(\zeta)&\ge \big(\nu_1 p-\sum_{\xi\in V_\RR-S} \rho_\xi (2^Nu_\xi)^2 g_{i_\xi}\Big)(\zeta)  - \Big|\Big(\sum_{\xi\in {V_\RR\setminus S}} \rho_\xi\big((2^Nu_\xi)^2-\widehat u_\xi^2 \big)g_{i_\xi}\Big)(\zeta)\Big|\\
&\ge 2^{2N}\widetilde p(\zeta)  - \sum_{\xi\in {V_\RR\setminus S}} \rho_\xi|\big((2^Nu_\xi)^2-\widehat u_\xi^2 \big)(\zeta)||g_{i_\xi}(\zeta)|\\
&\ge 2^{2N-\Cn d^{n}(d+\tau)-\eta_p}-  D 2^{N+2+3\Cn d^{n-1}\delta(d+\tau)}.
\end{align*}
Therefore, by taking $
\h(\nu_1) = 2N =\Cn d^{n-1}(d+\delta)(d+\tau)+\eta_p
$
for some adjusted $\Cn$, we conclude that for all $\zeta\in V_\RR$, $\widehat p(\zeta)>0$,
and
moreover, for all $\zeta \in V_\CC$ such that $p(\zeta)\ne 0$ we can also show  that
$$|\h(\widehat p(\zeta))|\le \Cn d^{n-1}\delta (d+\tau)+\eta_p $$
proceeding in the same way than before for the lower bound and using that
$$|\widehat p(\zeta)|\le 2^{2N}|\widetilde p(\zeta)|+ \sum_{\xi\in V_\RR\setminus S}\rho_\xi |\big((2^Nu_\xi)^2-\widehat u_\xi^2 \big)(\zeta)||g_{i_\xi}(\zeta)|$$ for the upper bound.

We finally observe that $\deg(\widehat p)\le \max\{d_p, d+2\delta\}$ and, by application of \Cref{cor:hu}, which implies that 
$$\h( \widehat u_\xi)\le \Cnd d^{2n-1}\delta(d+\tau)+\eta_p,$$
and therefore
$$\h(\widehat p)\le \Cnd d^{2n-1}\delta(d+\tau)+2\eta_p+\tau_p$$ (by adjusting $\Cnd$ again) since $\widehat u_\xi$ has at most $D\le d^n$ monomials. 
\end{proof}
\color{black}

\begin{theorem} \label{thm:f strict pos} Let $I=(\bm h)\subset \QQrng$ be a zero-dimensional radical ideal and let $B$ be a monomial basis of $\QQrng/I$ with $\delta:=\max\{d,\deg(B)\} $.  Let $S=S(\bm g,\bm h)$ be as in \Cref{eq:s} where $g_1,\dots,g_r,
h_1,\dots, h_s\in \ZZrng$ with  $\deg(g_i),\deg(h_j)\le d$ and $\h(g_i), \h(h_j)\le \tau$, $1\le i\le s$, $1\le j\le s$. 
Let  $p \in \ZZrng $  be  such that $p>0$ on $S$ with $d_p:=\max\{d,\deg(p)\} $, $\h(p)\le \tau_p$ and $|\h(p(\zeta))|\le \eta_p$ for all $\zeta \in V_{\CC}(I)$ such that $p(\zeta)\ne 0$. Then there exist $\nu_0,\nu_1\in \NN$,  a positive definite matrix $Q_0\in S^D(\ZZ)$, and $\omega_{i,k}\in\NN$ and  $q_{i,k}\in \langle B\rangle_\ZZ$ for $1\le i\le r$, $1\le k\le D$, such that 
\begin{equation*} 
p \equiv \frac 1 {\nu_0} B^t  Q_0 B + \frac 1 {\nu_{1}} \sum_{i=1}^r \Big( \sum_{k=1}^D \omega_{i,k}q_{i,k}^2 \Big) g_i   \ \  \mod \ I 
\end{equation*}
where   \begin{itemize} \item $\h(Q_0) , \h(\nu_0)\le   \Cnd d^{n-1}(d^n\delta+d_p)(d+\tau)+\C1\eta_p+\tau_p$;
    \item $ \h(\nu_1) \le  \Cn d^{n-1}\delta(d+\tau)+\eta_p $;
    \item $\h(\omega_{i,k})\le \Cn d^{n}(d+\tau)+\eta_p$;
    \item $ \h(q_{i,k})\le \Cnd d^{2n-1}\delta(d+\tau)+\eta_p$.
\end{itemize}
\end{theorem}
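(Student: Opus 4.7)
The plan is to directly combine the two main tools just established, namely \Cref{lem:transf} and \Cref{thm:Thethm}. First I would apply \Cref{lem:transf} to our polynomial $p$ to produce an auxiliary polynomial $\widehat p \in \ZZrng$ of the form
\[
\widehat p = \nu_1 p - \sum_{\xi \in V_\RR \setminus S} \rho_\xi \widehat u_\xi^2 g_{i_\xi}
\]
which is strictly positive on all of $V_\RR(I)$, not just on $S$. The lemma gives us explicit control on $\h(\nu_1)$, $\h(\rho_\xi)$, $\h(\widehat u_\xi)$, together with the important bounds $\deg(\widehat p) \le \widehat d_p := \max\{d_p, d+2\delta\}$, $\h(\widehat p) \le \widehat \tau_p$ and $|\h(\widehat p(\zeta))| \le \widehat \eta_p$ for all $\zeta \in V_\CC$ with $p(\zeta) \neq 0$.

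Next, since $\widehat p > 0$ on $V_\RR(I)$, I would apply \Cref{thm:Thethm} to $\widehat p$ (with the substituted parameters $\widehat d_p$, $\widehat \tau_p$, $\widehat \eta_p$). This yields $\nu_0 \in \NN$ and a positive definite matrix $Q_0 \in S^D(\ZZ)$ such that
\[
\widehat p \equiv \frac{1}{\nu_0} B Q_0 B^t \ \mod I,
\]
and hence
\[
\nu_1 p \equiv \frac{1}{\nu_0} B Q_0 B^t + \sum_{\xi \in V_\RR \setminus S} \rho_\xi \widehat u_\xi^2 g_{i_\xi} \ \mod I.
\]
Dividing by $\nu_1$ and regrouping the outer sum by the index $i = i_\xi$ (so that for each $1 \le i \le r$ the $\widehat u_\xi$'s with $i_\xi = i$ become the $q_{i,k}$'s and the $\rho_\xi$'s become the $\omega_{i,k}$'s, padding by zero up to $D$ terms if needed) gives exactly the required representation.

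Finally, I would verify the height bounds by plugging in the estimates of \Cref{lem:transf} into the bound of \Cref{thm:Thethm}. The bounds on $\h(\nu_1)$, $\h(\omega_{i,k}) = \h(\rho_\xi)$ and $\h(q_{i,k}) = \h(\widehat u_\xi)$ are direct from \Cref{lem:transf}. For $\h(Q_0)$ and $\h(\nu_0)$, substituting $\widehat d_p \le \max\{d_p, d+2\delta\}$, $\widehat \tau_p \le \Cnd d^{2n-1}\delta(d+\tau) + 2\eta_p + \tau_p$ and $\widehat \eta_p \le \Cn d^{n-1}\delta(d+\tau) + \eta_p$ into the bound $\Cnd d^{n-1}(d^n\delta + \widehat d_p)(d+\tau) + \C1 \widehat\eta_p + \widehat\tau_p$ and absorbing the lower-order terms into the leading $\Cnd d^{n-1}(d^n \delta + d_p)(d+\tau)$ term yields the stated bound $\Cnd d^{n-1}(d^n\delta + d_p)(d+\tau) + \C1 \eta_p + \tau_p$. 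The whole proof is mostly bookkeeping: the only subtle point is checking that after regrouping the correction sum, each $g_i$ indeed carries a sum of at most $D$ weighted squares of elements of $\langle B \rangle_\ZZ$, which is automatic since $|V_\RR \setminus S| \le |V_\CC(I)| = D$ and each $\widehat u_\xi$ already lies in $\langle B \rangle_\ZZ$.
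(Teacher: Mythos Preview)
Your proposal is correct and matches the paper's own proof essentially line for line: apply \Cref{lem:transf} to pass from $p>0$ on $S$ to an integer polynomial $\widehat p>0$ on all of $V_\RR(I)$, then feed $\widehat p$ into \Cref{thm:Thethm}, and finally substitute the $\widehat d_p,\widehat\tau_p,\widehat\eta_p$ bounds and absorb lower-order terms into the leading constant. The only cosmetic difference is that the paper phrases it as applying \Cref{thm:Thethm} ``to $\widehat p/\nu_1$'' and does not spell out the regrouping of the correction sum by the index $i_\xi$, which you make explicit.
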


\begin{proof}
We apply \Cref{thm:Thethm}  to  $\widehat p/\nu_1$, where $\widehat p$ is the polynomial obtained in \Cref{lem:transf}.
The bounds for $\nu_1$, $\omega_{i,k}$ and $q_{i,k}$ are obtained directly from this lemma.  Since for $\widehat p$ we have
$$\widehat d_p=\max\{d_p,d + 2\delta \},\  \widehat \eta_p=\Cn d^{n-1}\delta(d+\tau)+\eta_p \ \mbox{ and } \widehat \tau_p=\Cnd d^{2n-1}\delta(d+\tau)+2\eta_p+\tau_p,$$ \Cref{thm:Thethm} implies that
\begin{align*}
 \h(Q_0) , \h(\nu_0)& \le  \Cnd d^{n-1}(d^n\delta+\widehat d_p)(d+\tau) + \C1 \widehat\eta_p+ \widehat\tau_p\\
 &\le \Cnd d^{n-1}(d^n\delta+d_p)(d+\tau)+\C1\eta_p+\tau_p,
 \end{align*}
 by adjusting again the constant $\Cnd$.
\end{proof}

As a consequence of the previous result, we obtain the following  complete SoS representation \Cref{eq:main_result} which includes  degrees and height bounds for a polynomial $f$ strictly positive on $S$ when $I$ is a radical zero-dimensional ideal. 
\begin{theorem} \label{thm:f2 strict pos} Let $I=(\bm h)\subset \QQrng$ be a zero-dimensional radical ideal and let $B$ be a monomial basis of $\QQrng/I$ with $D:=|B|$ and  $\delta:=\max\{d,\deg(B)\} $.  Let $S=S(\bm g,\bm h)$ be as in \Cref{eq:s} where $g_1,\dots,g_r,
h_1,\dots, h_s\in \ZZrng$ with  $\deg(g_i),\deg(h_j)\le d$ and $\h(g_i), \h(h_j)\le \tau$, $1\le i\le s$, $1\le j\le s$. 
Let  $f \in \ZZrng $  be  such that $f>0$ on $S$ with $d_f:=\max\{d,\deg(f)\}$ and $\h(f)\le \tau$. Then there exist $\nu_{0,k},\nu_1,\nu_2\in \NN$,  $\omega_{i,k}\in \NN$, $q_{i,k}\in \langle B\rangle_\ZZ$ and $p_j\in \ZZ[\bm x]$ for $1\le i\le r$, $1\le k\le D$ and $1\le j\le s$,  such that 
\begin{equation*} 
f= \sum_{k=1}^D \frac 1 {\nu_{0,k}} q_{0,k}^2 +  \frac 1 {\nu_1} \sum_{i=1}^r \Big( \sum_{k=1}^D \omega_{i,k}q_{i,k}^2 \Big) g_i + \frac 1 {\nu_2} \sum_{j=1} ^sp_j\, h_j
\end{equation*}
where  
\begin{itemize}
    \item $\h(\nu_{0,k}) \le    \Cnd d^{2n-1}(d^n\delta+d_f)(d+\tau) $ for $1\le k\le D$; 
    \item $\h(\nu_1) \le  \Cn d^{n-1}(\delta+d_f)(d+\tau) $;
    \item $\h(\omega_{i,k})\le \Cn d^{n-1}d_f(d+\tau)$ for $1\le i\le r, \ 1\le k\le D$;
    \item $\h(q_{i,k})\le \Cnd d^{n-1}(d^n\delta+d_f)(d+\tau)$ for $0\le i\le r, \ 1\le k\le D$.
    \end{itemize}
 Furthermore, if $\bf h$ is a graded basis of $I$, then for all $j$     
     $$\deg(p_j)< \widehat d-\deg(h_j)\quad \mbox{and}\quad  \h(\nu_2),\h(p_j)\le \Cnd d^{n-1}(d^n\delta+d_f)(d+\tau)+ \cst(n; \widehat{d}) {\widehat d}^n \tau,$$
for 
$\widehat d:= \max\{d_f, d+2\deg(B)\}+1$.
\end{theorem}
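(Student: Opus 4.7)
The overall approach is to apply \Cref{thm:f strict pos} to produce a modulo-$I$ representation of $f$ with a Gram-matrix term, and then independently (a) lift this congruence to an equality in $\QQrng$ via \Cref{prop:reduction}, which yields the $\sum_j p_j h_j$ piece, and (b) diagonalize the Gram matrix via square-root-free Cholesky to obtain the $\sum_k \nu_{0,k}^{-1} q_{0,k}^2$ piece.

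First, I apply \Cref{lem:htpzeta} to $f$ (with $d_p = d_f$, $\tau_p = \tau$) to obtain $|\h(f(\zeta))| \le \Cn d^{n-1}d_f(d+\tau) =: \eta_f$ for every $\zeta \in V_\CC$ with $f(\zeta) \ne 0$. Feeding this into \Cref{thm:f strict pos} produces $Q_0 \in S^D(\ZZ)$ positive definite, $\nu_0, \nu_1 \in \NN$, and $\omega_{i,k} \in \NN$, $q_{i,k} \in \vspan{B}_\ZZ$ for $1 \le i \le r$, $1 \le k \le D$, satisfying
\begin{equation*}
f \equiv \frac{1}{\nu_0}B^t Q_0 B + \frac{1}{\nu_1}\sum_{i=1}^r\Big(\sum_{k=1}^D \omega_{i,k}q_{i,k}^2\Big)g_i \mod I,
\end{equation*}
with the bounds for $\nu_1, \omega_{i,k}, q_{i,k}$ ($i \ge 1$) as in the theorem statement and $\h(Q_0), \h(\nu_0) \le \Cnd d^{n-1}(d^n\delta + d_f)(d+\tau)$ after absorbing $\eta_f$.

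Next, I clear denominators and set $R := \nu_0\nu_1 f - \nu_1 B^t Q_0 B - \nu_0 \sum_i (\sum_k \omega_{i,k}q_{i,k}^2)g_i \in I \cap \ZZrng$, whose degree is at most $\max\{d_f, d+2\delta\} \le \widehat d - 1$ and whose height satisfies $\h(R) \le \Cnd d^{n-1}(d^n\delta + d_f)(d+\tau)$ (the $\h(\nu_1) + \h(Q_0)$ contribution dominates). Since $\bm h$ is a graded basis, I apply \Cref{prop:reduction} to $R$ with $d_p = \widehat d$ and $\tau_p = \h(R)$, obtaining $R = \frac{1}{\nu}\sum_j p_j h_j$ in $\ZZrng$ with $\deg(p_j) < \widehat d - \deg(h_j)$ and $\h(\nu), \h(p_j) \le \cst(n;\widehat d)\widehat d^n\tau + \Cnd d^{n-1}(d^n\delta+d_f)(d+\tau)$. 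Setting $\nu_2 := \nu \nu_0 \nu_1$, the triangle inequality $\h(\nu_2) \le \h(\nu)+\h(\nu_0)+\h(\nu_1)$ matches the stated bound.

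Third, I apply square-root-free Cholesky (\Cref{prop:cholesky}) to $Q_0/\nu_0 = L\Delta L^t$, with $L$ unit lower triangular and $\Delta = \diag(\Delta_1,\ldots,\Delta_D)$ of positive diagonal entries. By Bareiss-type identities, the primitive-fraction entries of $L$ and $\Delta$ have heights bounded by $O(D)$ times $\max\{\h(Q_0), \h(\nu_0)\}$, namely $\Cnd d^{2n-1}(d^n\delta + d_f)(d+\tau)$ since $D \le d^n$. Writing $\Delta_k = a_k/b_k$ and the $k$-th column of $L$ as $L_k = \widehat L_k/\mu_k$ with $a_k, b_k, \mu_k \in \NN$ and $\widehat L_k \in \ZZ^D$, we get
\begin{equation*}
\frac{1}{\nu_0}B^t Q_0 B = \sum_{k=1}^D \Delta_k (B^t L_k)^2 = \sum_{k=1}^D \frac{1}{a_k b_k \mu_k^2}\big(a_k B^t \widehat L_k\big)^2,
\end{equation*}
hence I set $\nu_{0,k} := a_k b_k \mu_k^2 \in \NN$ and $q_{0,k} := a_k B^t \widehat L_k \in \vspan{B}_\ZZ$. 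Combining with the previous step yields the full decomposition of $f$ claimed in the theorem.

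The main technical obstacle is the careful orchestration of common denominators: \Cref{prop:reduction} must be invoked before the Cholesky step, since otherwise the $D$-fold height inflation of the Cholesky factors would propagate through $R$ into $\h(\nu_2)$ and exceed its stated bound. A secondary difficulty is matching exactly the bound on $\h(q_{0,k})$, which is stated free of the Cholesky inflation factor and therefore requires the refined height estimates of \Cref{prop:cholesky} for the structure of the square-root-free factors, rather than a naive Bareiss bound.
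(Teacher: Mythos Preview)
Your approach matches the paper's proof essentially step for step: bound $\eta_f$ via \Cref{lem:htpzeta}, feed into \Cref{thm:f strict pos}, form the integer polynomial $R=\widehat f\in I$ from $Q_0$ (not from its Cholesky factors) and apply \Cref{prop:reduction} for the $p_j$'s, then separately Cholesky-factor $Q_0$ for the $q_{0,k}$'s. The only cosmetic difference is that the paper applies \Cref{prop:cholesky bound} directly to the integer matrix $Q_0$ rather than to $Q_0/\nu_0$, and then sets $\nu_{0,k}:=\nu_0\widehat\nu_{0,k}$. Regarding your ``secondary difficulty'': the paper's own proof in fact obtains $\h(q_{0,k})\le \Cnd d^{2n-1}(d^n\delta+d_f)(d+\tau)$, one factor $d^n$ larger than the $d^{n-1}$ written in the statement, so the mismatch you spotted appears to be a typo in the theorem statement rather than a gap in either argument.
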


\begin{proof} When we apply \Cref{thm:f strict pos} to the polynomial $f$, taking into account that by \Cref{lem:htpzeta}, $$|\h(f(\zeta))|\le \Cn d^{n-1}d_f(d+\tau) $$
for all $\zeta\in V_\CC$ such that $f(\zeta)\ne 0$,
we obtain
\begin{equation*}
f\equiv \frac 1 {\nu_0} B Q_0 B^t +  \frac 1 {\nu_1} \sum_{i=1}^r \Big( \sum_{k=1}^D \omega_{i,k}q_{i,k}^2 \Big) g_i   \ \  \mod \ I 
\end{equation*}
where 
    $Q_0\in S^{D}(\ZZ), \nu_0\in \NN$ with   \begin{equation}\label{eq:Q0bound} \h(Q_0) , \h(\nu_0)\le   \Cnd d^{n-1}(d^n\delta+d_f)(d+\tau),\end{equation} 
   and  $\nu_1 \in \NN$, $\omega_{i,k}\in \NN$, $q_{i,k}\in \langle B\rangle_\ZZ$ with 
   $$
   \h(\nu_1) \le  \Cn d^{n-1}(d_f+\delta)(d+\tau) ,  \h(\omega_{i,k})\le \Cn d^{n-1}d_f(d+\tau),
    \h( q_{i,k})\le \Cnd d^{n-1}(d^n\delta+d_f)(d+\tau).
    $$
   Then, we apply \Cref{prop:cholesky} to the positive definite matrix $Q_0\in S^D(\ZZ)$ and obtain
    that $$B^tQ_0B=\sum \frac{1}{\widehat\nu_{0,k}} q_{0,k}^2$$
   where by \Cref{eq:Q0bound}  
   $$h(\widehat\nu_{0,k}), \h(q_{0,k})\le  2D\big(\log(D)+\tau_Q)\le \Cnd d^{2n-1}(d^n\delta+d_f)(d+\tau)$$
   for $1\le k\le D$.
   We define $\nu_{0,k}:=\nu_0\widehat\nu_{0,k}$, which satisfies the same height bound as $\widehat \nu_{0,k}$ to get $$\frac{1}{\nu_0}B Q_0 B^t= \sum_{k=1}^D \frac{1}{\nu_{0,k}}q_{0,k}^2.$$  
This shows the existence of an SoS  for $f$  with the stated height bounds for $\nu_{0,k}$, $\nu_1$, $\omega_{i,k}$ and $q_{i,k}$.

In the case when $\bm h$ is a graded basis of $I$, in order to obtain the height bounds for $\nu_2\in \ZZ$ and the polynomial coefficients $p_j\in \ZZ[\bm x]$ for $1\le j\le s$, we 
observe that the polynomial 
$$ \widehat f:=\nu_0\nu_1\,f-  \nu_1 B  Q_0 B^t -  \nu_0 \sum_{i=1}^r \Big( \sum_{k=1}^D \omega_{i,k}q_{i,k}^2 \Big) g_i \  \in I\cap \ZZ[\bm x]$$ 
satisfies 
$$
\deg(\widehat f)< \widehat d:= \max\{d_f,d+2\deg(B)\}+1\quad \mbox{and}\quad \h(\widehat f) \le \widehat\tau:=\Cnd d^{n-1}(d^n\delta+d_f)(d+\tau) .
$$
By \Cref{prop:reduction}, we have $\widehat f = \dfrac 1 {\widehat \nu} \displaystyle{\sum_{j=1}^s }p_j h_j$ with 
$$
\h(\widehat \nu) \le \cst(n; \widehat{d}) {\widehat d}^n \tau , 
\quad 
\h(p_j) \le  \Cnd d^{n-1}(d^n\delta+d_f)(d+\tau) + \cst(n; \widehat{d}) {\widehat d}^n \tau 
$$
Finally, we define $\nu_2:=\nu_0\nu_1\widehat \nu$ which also satisfies
\[
\h(\nu_2) \le \Cnd d^{n-1}(d^n\delta+d_f)(d+\tau)+ \cst(n; \widehat{d}) {\widehat d}^n \tau.\qedhere
\]
\end{proof}
\begin{remark}
    When $\deg(f), \deg(g_i), \deg(h_i) \le d$ we get \Cref{thm:C}, since $d+\tau \le d\,\tau$ for $d, \tau, \delta \ge 1$.
\end{remark}
\color{black}
\section{Height bounds for $I$ radical and $f$ nonnegative on $S$}
\label{sec:7}
We consider now the case where $f\in \ZZrng$ is nonnegative on $S=S(\bm g, \bm h)$ as in \Cref{eq:s}, for the case when $I=(\bm h)$ is a radical zero-dimensional  ideal, and $\bm g, \bm h\subset \ZZrng$ with $\deg(g_i), \deg(h_j)\le d$ and $\h(g_i), \h(h_j)\le \tau$ for $1\le i\le r$, $1\le j\le s$.
Again, $B$ is a monomial basis of $\QQrng/I$ with $D:=|B|$ and $\delta:=\max\{d,\deg(B)\}$.

Since $I$ is radical, we have $(I:f) + (f)= (1)$. Hereafter, we use this identity to reduce to the cases treated in \Cref{sec:6}.

\begin{lemma}\label{lem:a b gamma} Let $I\subset \QQrng$ be a zero-dimensional and radical ideal. Let $f \in \ZZrng $  be  such that $f\ge 0$ on $S$ and set $d_f:=\max\{d,\deg(f)\}$ and $\h(f)\le \tau$. Then, there exists $a, b \in \vspan{B}_{\ZZ}$ and $\gamma\in \NN$ such that 
\begin{itemize}
\item $a\, f + b = \gamma\ \mod I$,
\item $a>0$ on $S$,
\item $\h(a), \h(b), \h(\gamma) \le \Cnd d^{2n-1} (\delta + d_f) (d+\tau),$
\item $|\h(a(\zeta))| \le \Cnd d^{2n-1} (\delta + d_f)  (d+\tau)$ for any $\zeta \in V_{\CC}(I)$ such that $a(\zeta)\ne 0$.
\end{itemize}
\end{lemma}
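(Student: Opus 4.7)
The plan is to exploit the radicality of $I$---which ensures $(I\colon f)+(f)=(1)$ as noted in \Cref{rem:iff_nonnegative_radical}---to construct a B\'ezout-type identity $a_0 f + b_0 \equiv 1 \mod I$ with $a_0, b_0 \in \vspan{B}_\QQ$ and $b_0 \in (I\colon f)$, and then to clear a common denominator to pass from $\QQ$ to $\ZZ$. Because $a_0$ will turn out to be nonnegative on $S$, the positivity adjustment of the proof of \Cref{thm:nonneg} will reduce to adding a single copy of $b_0$.

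Exploiting the idempotent decomposition of $\QQrng/I$ available because $I$ is radical zero-dimensional, I would set
\[
a_0 := \cl N\!\Bigl(\sum_{\zeta\in V_\CC,\,f(\zeta)\neq 0} \frac{u_\zeta}{f(\zeta)}\Bigr), \qquad b_0 := \cl N\!\Bigl(\sum_{\zeta\in V_\CC,\,f(\zeta)=0} u_\zeta\Bigr).
\]
Both sums are invariant under complex conjugation (since $f\in \QQrng$ implies the partition of $V_\CC$ according to vanishing of $f$ is conjugation-stable and $u_{\bar\zeta}=\overline{u_\zeta}$), so $a_0, b_0 \in \vspan{B}_\QQ$, and the relations $u_\zeta f \equiv f(\zeta) u_\zeta$, $u_\zeta u_\xi \equiv \delta_{\zeta\xi}u_\zeta$, $\sum_\zeta u_\zeta \equiv 1 \mod I$ give at once $a_0 f + b_0 \equiv 1 \mod I$ and $b_0 f \equiv 0 \mod I$. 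Writing $a_0 = A_0/\gamma$, $b_0 = B_0/\gamma$ with a common denominator $\gamma \in \NN$ and $A_0, B_0 \in \vspan{B}_\ZZ$, the bound
\[
\h(A_0),\, \h(B_0),\, \h(\gamma) \;\le\; \Cnd d^{2n-1}(\delta + d_f)(d+\tau)
\]
should follow by combining \Cref{cor:hu} ($\h(u_\zeta) \le \Cnd d^{2n-1}\delta(d+\tau)$) with \Cref{lem:htpzeta} applied to $f$ ($|\h(f(\zeta))| \le \Cn d^{n-1}d_f(d+\tau)$), absorbing the $\log D \le n\log d$ overhead from summing $D \le d^n$ terms into the constant $\Cnd$.

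At each $\zeta \in V_\CC$ the evaluations are explicit: when $f(\zeta)\neq 0$, $a_0(\zeta)=1/f(\zeta)$ and $b_0(\zeta)=0$; when $f(\zeta)=0$, $a_0(\zeta)=0$ and $b_0(\zeta)=1$. In particular $a_0(\xi)\ge 0$ for every $\xi\in S$, vanishing precisely on $S^*:=\{\xi\in S\colon f(\xi)=0\}$, so $\overline{a} := a_0 + b_0$ satisfies $\overline{a}>0$ on $S$ and preserves $\overline{a} f + b_0 \equiv 1 \mod I$. Defining $a := \gamma\overline{a} = A_0 + B_0 \in \vspan{B}_\ZZ$ and $b := B_0 \in \vspan{B}_\ZZ$, we have $af + b \equiv \gamma \mod I$ with $a>0$ on $S$. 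Finally, the evaluations $a(\zeta)=\gamma/f(\zeta)$ (when $f(\zeta)\neq 0$) and $a(\zeta)=\gamma$ (when $f(\zeta)=0$) are all nonzero, giving $|\h(a(\zeta))| \le \h(\gamma) + |\h(f(\zeta))| \le \Cnd d^{2n-1}(\delta + d_f)(d+\tau)$.

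The main obstacle I anticipate is controlling the common denominator $\gamma$ when combining the algebraic (generically non-rational) coefficients of the idempotents $u_\zeta$ and the values $1/f(\zeta)$ into a single rational expression in $\vspan{B}_\QQ$: naively, the product $\prod_\zeta f(\zeta)$ together with the denominators of the $u_\zeta$ could carry a height larger than claimed, and a careful Galois-orbit accounting is needed. A cleaner alternative that sidesteps this bookkeeping is to set the problem up as a linear system $M_f\vb{a}+\vb{b}=\vb{e}_1$, $M_f\vb{b}=\vb{0}$, where $M_f$ is the multiplication-by-$f$ matrix in basis $B$ whose entries are bounded via \Cref{prop:Creduction'}, and to apply a Hadamard/Cramer-type estimate such as \Cref{rem:HadamardD} to extract all height bounds at once.
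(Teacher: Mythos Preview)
Your ``cleaner alternative'' is precisely the paper's route, with one important difference in the choice of linear map. The paper does not work with the multiplication matrix $M_f$ in the basis $B$: its entries are the coefficients of $\cl N(fb_i)$, which by \Cref{prop:Creduction'} already carry height of order $\Cnd d^{2n-1}\delta(d+\tau)$ once cleared of the common denominator, so that Cramer on a $2D\times 2D$ system with $D\le d^n$ would yield $\Cnd d^{3n-1}(\delta+d_f)(d+\tau)$---a factor $d^n$ too large. Instead the paper expresses the conditions $\widetilde b\,f\in I$ and $\widetilde a\,f+\widetilde b\equiv 1$ as $\sum_i\cl U(fb_i)\lambda_i=0$ and $\sum_i\cl U(fb_i)\mu_i+\sum_i\cl U(b_i)\lambda_i=\cl U(1)$, using the univariate map $\cl U$ of \eqref{eq:defN2}. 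By \Cref{lem:height-N} the entries $\cl U(fb_i),\cl U(b_i)$ are \emph{integers} of height only $\Cnd d^{n-1}(\delta+d_f)(d+\tau)$, and now Cramer via \Cref{rem:HadamardD} lands on the claimed $\Cnd d^{2n-1}(\delta+d_f)(d+\tau)$.

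Your idempotent construction, and especially the positivity fix $\overline a=a_0+b_0$, is more explicit than what the paper does; the explicit evaluations $a(\zeta)=\gamma/f(\zeta)$ or $\gamma$ also give the fourth bullet cleanly. In the linear-system route one obtains a Cramer solution $\widehat a$ whose values at the zeros of $f$ are not a~priori controlled, and the paper instead takes $a=\widehat a+\rho\,b$ with an integer $\rho$ exceeding $\max\{|\widehat a(\zeta)|:f(\zeta)=0\}$, bounded by combining $\h(\widehat a)$ with $|\h(b_i(\zeta))|\le\Cnd d^{n-1}\delta(d+\tau)$ from \Cref{lem:htpzeta}. Your diagnosis of the idempotent approach is correct: bounding $\h$ of the complex coefficients of $\sum_\zeta u_\zeta/f(\zeta)$ controls only the archimedean size of the (rational) coefficients of $a_0$, not their denominators, and recovering the latter forces you back to the same linear-algebra detour.
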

\begin{proof}
 
Let $B = \{ \, b_1, \dots , b_D\,\}$ be the basis of $\QQrng\big/ I$.  We first analyze  polynomials $\widetilde a\in \langle B\rangle _\QQ$ and $\widetilde b\in (I:f)\cap \langle B\rangle_\QQ$ such that $\widetilde a \, f + \widetilde b\equiv 1\ \mod I$ (we note that such $\widetilde a$ and $\widetilde b$ exist since $(I:f)+(f)=(1)$).

Let  $\widetilde b = \sum_i \lambda_i b_i$ for some $\lambda_i\in \QQ$ and $\widetilde a=\sum_i \mu_i b_i$ for some $\mu_i\in \QQ$, $1\le i\le D$. Since $I$ is radical, we can apply the results of \Cref{subs:52}: we have $$\widetilde b\in (I:f) \iff \widetilde b\, f\in I \iff \cl U(\widetilde b\,f)=0 \iff \sum_{1\le i\le D}\cl U (fb_i) \,\lambda_i =0,$$
where $\cl U$ is the normal form map defined in \eqref{eq:defN2}.
Analogously, 
$$\widetilde a f+\widetilde b \equiv 1 \ \mod I \iff  \sum_{1\le i\le D}\big( \cl U(fb_i) \,\mu_i + \cl U (b_i)\, \lambda_i\big)=\cl U (1)=1.$$
Therefore $(\lambda_1, \ldots, \lambda_D, \mu_1, \ldots, \mu_D) $ is a solution of the linear system
 $\bm A \bm y = \bm b$ where $\bm A\in \ZZ^{2D\times 2D}$ is the  matrix of coefficients of $(\cl U(fb_i),\cl U(b_i))\in \langle 1,\dots, t^{D-1}\rangle\times  \langle 1,\dots, t^{D-1}\rangle$ and $\bm b= (0,\dots, 0;1,\dots,0)$.
 
 By \Cref{lem:height-N}, $\h(\bm A)\le \Cnd d^{n-1} (\delta+d_f) (d+\tau)$, and we deduce from 
 \Cref{rem:HadamardD}, that there is a solution $\bm {c}/\gamma$ with $\bm c\in \ZZ^{2D}$ and $\gamma \in \NN$ with 
$$
\h(\gamma), \h(\bm c) \le \Cnd  d^{2n-1} (\delta+d_f) (d+\tau)
$$
since $D\le d^n$. This implies that  $\sum_{i=1}^D \mu_i b_i=\widehat a/\gamma $ with $\widehat a\in \langle B\rangle_\ZZ$ and $ \sum_{i=1}^D \lambda_i b_i=   b/\gamma$ with $b\in \langle B\rangle_\ZZ$ and $\gamma\in \NN$, which  satisfy \begin{equation}\label{eq:tilde a b}
 \widehat a \, f + b = \gamma \ \mod I \quad \textup{ and } \quad  \h(\gamma),  \h(\widehat a), \h( b), \le \Cnd d^{2n-1} (\delta+d_f) (d+\tau).
\end{equation}
Moreover, by \Cref{lem:htpzeta},  for $\zeta \in V_{\CC}(I)$ we have that 
$|\h(b_i(\zeta))| \le \Cnd d^{n-1} \delta (d+ \tau)$ which implies that 
\begin{align}
    \h(\widehat a(\zeta)),\h( b(\zeta))& \le \log(D)+ \Cnd d^{2n-1} (\delta+d_f)(d+ \tau) +\Cnd d^{n-1} \delta  (d+\tau)   \nonumber \\
    & \le \Cnd d^{2n-1} (\delta+d_f) (d+\tau). \label{eq:h a zeta}
\end{align}
We now modify $\widehat a$ to ensure that it will be strictly positive on all $\xi\in S$ knowing that $f(\xi)\ge 0$ for all $\xi\in S$. Note that for all $\zeta\in V_\CC$, we have $\widehat a(\zeta) f(\zeta) +  b(\zeta) = \gamma$. 
\begin{itemize}
    \item  Let $\xi\in V_\RR$. If $f(\xi)=0$ then $ b(\xi)= \gamma\ge 1$ and for $\rho \ge |\widehat a(\xi)|+1$ we have $$\widehat a(\xi) + \rho \,  b(\xi) \ge\widehat a(\xi) + |\widehat a(\xi)|\gamma + \gamma  \ge \gamma\ge 1,$$
   
   and if $f(\xi)>0$ then $ b(\xi)=0$ since $ b\, f\in I$. Thus, $\widehat a(\xi)= \dfrac{\gamma} {f(\xi)} >0 $ and $\widehat a(\xi) + \rho\,  b(\xi) >0 $ for any  
    $\rho \in \RR$.
    \item More generally, if $\zeta\in V_\CC$ is such that $f(\zeta)=0$, then for $\rho\ge |\widehat a(\xi)|+1$, $|\widehat a(\zeta) + \rho \,  b(\zeta)|\ge 1$, and if 
    $f(\zeta)\ne 0$, then $|\widehat a(\zeta)+\rho\, b(\zeta)|=\dfrac{\gamma}{|f(\zeta)|}$.
\end{itemize}
Now, let $\rho\in \NN$ be such that $\rho \ge  \max\{ |\widehat a(\zeta)|+1:\, \zeta \in V_\CC \mbox{ s.t. } f(\zeta)=0\}$ and define
$a:= \widehat a + \rho \, b$. Then $a\in \langle B\rangle_\ZZ$ and $a f + b \equiv \gamma \ \mod I$ since $bf\in I$. Moreover, since 
by \Cref{eq:h a zeta}, we can take 
$$
\rho \le  \Cnd d^{2n-1} (\delta+d_f) (d+\tau),
$$
we deduce that 
\begin{align*}
    \h(a)=\h(\widehat a +\rho b)  \le \Cnd d^{2n-1} (\delta+d_f) (d+\tau). 
\end{align*}
Finally,  \Cref{eq:h a zeta} also implies that $$
\h(a(\zeta)) \le \Cnd d^{2n-1}(\delta+d_f) (d+\tau)
$$
and for any $\zeta\in V_\CC$ such that $a(\zeta)\ne 0$,
we have $\h(a(\zeta))\ge  -\Cnd d^{n-1}(\delta+d_f)(d+\tau)$ by \Cref{lem:htpzeta}.
\end{proof}

We can now deduce the bounds on the SoS representation of $f$.

\begin{theorem} \label{thm:Thethm2}
Let $I=(\bm h)\subset \QQrng$ be a zero-dimensional radical ideal and let $B$ be a monomial basis of $\QQrng/I$ with $D:=|D|$ and $\delta:=\max\{d,\deg(B)\} $.  Let $S=S(\bm g,\bm h)$ be as in \Cref{eq:s} where $g_1,\dots,g_r,
h_1,\dots, h_s\in \ZZrng$ with  $\deg(g_i),\deg(h_j)\le d$ and $\h(g_i), \h(h_j)\le \tau$, $1\le i\le s$, $1\le j\le s$. 
Let  $f \in \ZZrng $  be  such that $f\ge 0$ on $S$ with   $d_f:=\max\{d,\deg(f)\}$ and $\h(f)\le \tau$. Then there exist $\nu_{0,k},\nu_1,\nu_2\in \NN$,  $\omega_{i,k}\in \NN$, $q_{i,k}\in f\cdot \langle B\rangle_\ZZ$ and $p_j\in \ZZ[\bm x]$ for $1\le i\le r$, $1\le k\le D$ and $1\le j\le s$,  such that 
\begin{equation*} 
f= \sum_{k=1}^D \frac 1 {\nu_{0,k}} q_{0,k}^2 +  \frac 1 {\nu_1} \sum_{i=1}^r \Big( \sum_{k=1}^D \omega_{i,k}q_{i,k}^2 \Big) g_i + \frac 1 {\nu_2} \sum_{j=1} ^sp_j\, h_j
\end{equation*}
where  
\begin{itemize}
    \item  $\h(\nu_{0,k}), \h(q_{0,k})\le \Cnd d^{3n-1}(\delta + d_f) (d+\tau)$ for $1\le k\le D$;
\item $\h(\nu_1),\h(\omega_{i,k}), \h(q_{i,k})\le  \Cnd d^{2n-1}(\delta + d_f) (d+\tau)$  for   $1\le i\le r$, $1\le k\le D$.
\end{itemize}
Furthermore, if $\bm h$ is a graded basis of $I$, then for all $j$,
    $$\deg(p_j)< \widehat d-\deg(h_j)\quad \mbox{and}\quad  \h(\nu_2),\h(p_j)\le \Cnd d^{2n-1}(\delta+d_f)(d+\tau)+ \cst(n; \widehat{d}) {\widehat d}^n \tau$$
for 
$\widehat d:= 2(d_f+\deg(B))+1$.
\end{theorem}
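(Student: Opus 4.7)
The plan is to reduce to the strictly positive case of \Cref{thm:f strict pos} by leveraging \Cref{lem:a b gamma}. First, apply \Cref{lem:a b gamma} to obtain $a, b \in \vspan{B}_\ZZ$ and $\gamma \in \NN$ with $af + b \equiv \gamma \mod I$, $a > 0$ on $S$, and all of $\h(a), \h(b), \h(\gamma), |\h(a(\zeta))|$ bounded by $\Cnd d^{2n-1}(\delta + d_f)(d+\tau)$. Since $a \in \vspan{B}_\ZZ$ we have $d_a := \max\{d,\deg(a)\} \le \delta$, which is crucial for the next step.

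Next, apply \Cref{thm:f strict pos} to the strictly positive polynomial $a$, with parameters $\tau_p = \eta_p = \Cnd d^{2n-1}(\delta + d_f)(d+\tau)$ and $d_p \le \delta$. This yields a positive definite $Q_0 \in S^D(\ZZ)$, an integer $\widetilde\nu_0$, and a decomposition
\[
a \equiv \frac{1}{\widetilde\nu_0} B\, Q_0\, B^t + \frac{1}{\nu_1} \sum_{i=1}^r \Big(\sum_{k=1}^D \omega_{i,k}\, \widetilde q_{i,k}^2\Big)\, g_i \mod I
\]
with $\widetilde q_{i,k} \in \vspan{B}_\ZZ$. Substituting the values of $\tau_p, \eta_p, d_p$ into the height bounds of \Cref{thm:f strict pos}, all resulting heights collapse to the single bound $\Cnd d^{2n-1}(\delta + d_f)(d+\tau)$.

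To transfer this representation from $a$ to $f$, multiply both sides by $f^2$ and use $af^2 = (\gamma - b)f \equiv \gamma f \mod I$, which holds because $b \in (I \colon f)$ implies $bf \in I$. Dividing by $\gamma$ produces the desired form, where each $q_{i,k} := f\,\widetilde q_{i,k} \in f \cdot \vspan{B}_\ZZ$ inherits height bound $\Cnd d^{2n-1}(\delta + d_f)(d+\tau)$ (adding $\h(f) \le \tau$ is absorbed). For the SoS part indexed by $k=0$, apply the square-root-free Cholesky decomposition (\Cref{prop:cholesky}) to $Q_0$ to write $f^2 B\, Q_0\, B^t = \sum_k \delta_k\, (f\cdot BL)_k^2$, with $q_{0,k} := f\,(BL)_k \in f\cdot \vspan{B}_\ZZ$. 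The Cholesky step multiplies the height bound by a factor of $D \le d^n$, giving $\h(\nu_{0,k}), \h(q_{0,k}) \le \Cnd d^{3n-1}(\delta + d_f)(d+\tau)$, which is precisely the extra $d^n$ factor distinguishing the $k=0$ part from the $g_i$ parts.

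Finally, when $\bm h$ is a graded basis, define $\widehat f \in I \cap \ZZrng$ as an integer multiple $\nu_2$ of $f$ minus the cleared SoS part, with $\nu_2$ a common denominator of height $\Cnd d^{2n-1}(\delta + d_f)(d+\tau)$; this $\widehat f$ has degree $< \widehat d$ and height $\Cnd d^{2n-1}(\delta + d_f)(d+\tau)$. Applying \Cref{prop:reduction} to $\widehat f$ provides integer polynomial coefficients $p_j$ with $\deg(p_j) < \widehat d - \deg(h_j)$ and $\h(p_j), \h(\nu_2) \le \Cnd d^{2n-1}(\delta + d_f)(d+\tau) + \cst(n;\widehat d)\widehat d^n \tau$, after absorbing $\nu_2$ into the final denominator. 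The main obstacle is the bookkeeping of heights through the chain of transformations: one must verify that composing the bound from \Cref{lem:a b gamma} with the substitution $\tau_p, \eta_p \mapsto \Cnd d^{2n-1}(\delta + d_f)(d+\tau)$ in \Cref{thm:f strict pos} does not blow up beyond the stated $\Cnd d^{2n-1}$ and $\Cnd d^{3n-1}$ factors, and that multiplication by $f$ (to pass from $\widetilde q_{i,k}$ to $q_{i,k} = f\widetilde q_{i,k}$) can be absorbed into the logarithmic adjustments of $\Cnd$.
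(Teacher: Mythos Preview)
Your proposal is correct and follows essentially the same approach as the paper's proof: apply \Cref{lem:a b gamma} to obtain $a>0$ on $S$, apply \Cref{thm:f strict pos} to $a$, use $f\equiv \frac{1}{\gamma}af^2 \bmod I$ to transfer the representation to $f$, apply the square-root-free Cholesky decomposition (\Cref{prop:cholesky bound}) for the $k=0$ part, and then invoke \Cref{prop:reduction} in the graded-basis case. The only minor cosmetic difference is in the naming of intermediate denominators (the paper writes $\nu_2=\gamma\nu_0\nu_1\widehat\nu$ with $\widehat\nu$ coming from \Cref{prop:reduction}), but your bookkeeping is equivalent.
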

\begin{proof}
We apply \Cref{thm:f strict pos} to $a>0$ on $S$ defined in \Cref{lem:a b gamma} such that $d_a = \max\{d, \delta\}$ and  $\tau_a=\eta_a= \Cnd d^{2n-1} (\delta+d_f) (d+\tau)$. We get
$$
a \equiv \frac 1 {\nu_0} B\, Q_0\, B^t + \frac 1 {\widehat\nu_1} \sum_{i=1}^r \big( \sum_{k=1}^{D} \omega_{i,k} \widehat q_{i,k}^2 \big) g_i \ \mod I
$$
where $ \nu_0,\widehat{} \nu_1 \in \NN$, $Q_0\in S^{D}(\ZZ)$ is a positive definite matrix,  and $\omega_{i,k}\in \NN$, $\widehat q_{i,k}\in \langle B\rangle_\ZZ$ satisfy
\begin{align*}
      \h(\nu_0), \h(Q_0)& \le   \Cnd d^{n-1}(d^n\delta+d_a)(d+\tau)+\C1\eta_a+\tau_a\\
      &\le \Cnd d^{2n-1}\delta(d+\tau)+\Cnd d^{2n-1} (\delta+d_f) (d+\tau)\\
     & \le \Cnd d^{2n-1}(\delta+d_f) (d+\tau).
\end{align*}
Analogously, 
     $$\h(\widehat\nu_1),\h(\omega_{i,k}),\h(\widehat q_{i,k}) \le   \Cnd d^{2n-1} (\delta+d_f) (d +\tau) \ \mbox{ for } \ 1\le i\le r, 1\le k\le D.$$
 Since from \Cref{lem:a b gamma} we have $af+b\equiv \gamma \ \mod I$ and  $b f\equiv 0 \ \mod I$ we deduce that 
\begin{align*}
f & \equiv \frac {1} {\gamma} a f^2 \ \mod I\\ & \equiv  \frac 1 {\gamma \nu_0} f^2 B Q_0 B^t +
\frac 1 {\gamma \widehat \nu_1}
\sum_{i=1}^r \big( \sum_{k=1}^{D} \omega_{i,k} (f \widehat q_{i,k})^2 \big)g_i
\ \mod I,
\end{align*}
where $\gamma \in \NN$ is s.t. $\h(\gamma) \le \Cnd d^{2n-1} (\delta+d_f)(d+\tau)$. 

Then, we apply \Cref{prop:cholesky bound} to the positive definite matrix $Q_0\in S^D(\ZZ)$ and obtain
    that $$B Q_0 B^t=\sum \frac{1}{\widehat\nu_{0,k}} \widehat q_{0,k}^2$$
   where for $1\le k\le D$, $\widehat\nu_{0,k}\in \NN$ and
   $\widehat q_{0,k}\in \langle B\rangle_\ZZ$ satisfy
   $$h(\widehat\nu_{0,k}), \h(\widehat q_{0,k})\le   \Cnd d^{3n-1}(\delta+d_f)(d+\tau).$$
Therefore, defining $\nu_{0,k}:=\gamma \nu_0\widehat \nu_{0,k}$, $\nu_1:=\gamma \widehat \nu_1$ and 
   $q_{i,k}:=f\,\widehat q_{i,k}$
   we obtain $\nu_{0,k}, \nu_1\in \NN$ and $q_{i,k}\in f\cdot \langle B\rangle_\ZZ$ which satisfy
   \begin{align*}
    f &\equiv   
\sum_{k=1}^{D} \frac 1 {\nu_{0,k}} q_{0,k}^2 
+ \frac 1 {\nu_1}
\sum_{i=1}^r \big( \sum_{k=1}^{D} \omega_{i,k} q_{i,k}^2 \big)
\  \mod I
\end{align*}
with
\begin{align*}& \h(\nu_{0,k}),\h (q_{0,k})\le \Cnd d^{3n-1}(\delta + d_f) (d+\tau) \quad \mbox{for} \quad 1\le k\le D,\\ & \h(\nu_1), \h(q_{i,k}) \le \Cnd d^{2n-1}(\delta + d_f) (d+\tau) \quad \mbox{for} \quad 1\le i\le r, 1\le k\le D. \end{align*}
This shows the first part of the statement.

When $\bm h$ is a graded basis of $I$, we conclude as  in the proof of \Cref{thm:f2 strict pos} by observing that the polynomial 
$$ \widehat f:=\gamma\nu_0\nu_1\,f-  \nu_1f^2 B Q_0 B^t -  \nu_0 \sum_{i=1}^r \Big( \sum_{k=1}^D \omega_{i,k}q_{i,k}^2 \Big) g_i \  \in I\cap \ZZ[\bm x]$$ 
satisfies 
$$
\deg(\widehat f)< \widehat d:= 2(d_f+\deg(B))+1\quad \mbox{and}\quad \h(\widehat f) \le \widehat\tau:=\Cnd d^{2n-1}(\delta + d_f) (d+\tau).
$$
By \Cref{prop:reduction}, we have $\widehat f = \dfrac 1 {\widehat \nu} \displaystyle\sum_{j=1}^s p_j h_j$ with 
$$
\h(\widehat \nu) \le \cst(n; \widehat{d}) {\widehat d}^n \tau , 
\quad 
\h(p_j) \le  \Cnd d^{2n-1}(\delta+d_f)(d+\tau) + \cst(n; \widehat{d}) {\widehat d}^n \tau 
$$
Finally, we obtain the required SoS representation   by  defining  $\nu_2:=\gamma\nu_0\nu_1\widehat \nu$, which also satisfies
\[
\h(\nu_2) \le \Cnd d^{2n-1}(\delta+d_f)(d+\tau)+ \cst(n; \widehat{d}) {\widehat d}^n \tau. \qedhere
\]
\end{proof}

\section{Algorithm and examples} \label{sec:8}
The approach developed in the manuscript leads naturally to an algorithm to compute the SoS decomposition of a strictly positive polynomial, using Semi-Definite Programming (SDP), which we draft below. 

For $\ell \in \NN$, let $\bm m_\ell$ be the row vector of all monomials of degree $\le \ell$. 
It is a well-known fact that $p \in \rng_{2\ell}$ is a sum of squares iff there exist  $Q\succcurlyeq 0$ (a  positive semi-definite symmetric matrix) such that $p = \bm m_\ell  \, Q\, \bm m_\ell^t$. Therefore, computing SoS representations of a certain degree boils down to computing positive semi-definite  matrices that represent the sums of squares. This task can be performed efficiently, using existing SDP solvers, provided the degree of the polynomials and the size of the SDP matrices is not too big, or equivalently that $\ell$ is small enough.

The SoS representation of a strictly positive polynomial $$f\equiv \sum_{k=1}^D \omega_{0,k}q_{0,k}^2 + \sum_{i=1}^r \left( \sum_{k=1}^D \omega_{i,k}q_{i,k}^2 \right) \, g_i \ \mod I$$ given in \Cref{thm:A} can be rewritten as 
\begin{equation*}\label{eq:SDP mod I}
f\equiv   B \widetilde Q_{0} B^t  + \sum_{i}  (B \widetilde Q_{i} B^t)g_i\ \mod I 
\end{equation*}
with $\widetilde Q_{i}\succcurlyeq 0$ and $\widetilde Q_0\succ 0$ (i.e. respectively $\widetilde Q_{i}$ positive semidefinite and $\widetilde Q_0$ positive definite) when $f>0$ on $S$, see also  \Cref{thm:epss:finite}. 

This can be reformulated into a classical SDP decomposition using monomials of bounded degree, as shown in the next lemma:
\begin{lemma} \label{lem:positive_definite}
If $f$ admits a representation of the form \eqref{eq:SDP mod I} with 
$\widetilde Q_{i}\succcurlyeq 0$ and $\widetilde Q_0\succ 0$ and $\bm h$ is a graded basis, then for $\ell_i \ge  \deg(B)$,
\begin{equation} \label{eq:decomp sdp}
  f =  \bm m_{\ell_0}  Q_{0} \bm m_{\ell_0}^t + \sum_{i} ( \bm m_{\ell_i} Q_{i} \bm m_{\ell_i}^t)g_i + \sum_{j=1}^{s} p_j\,h_j
\end{equation}
with $Q_0\succ 0$,  $Q_i \succcurlyeq 0$ for $i=1,..,r$, $\bm m_\ell$ is the vector of all monomials of degree $\le \ell$, $\ell_i \ge \deg(B)$ and $p_j \in \rng$ satisfies $\deg(p_j)\le  \max\{\deg(f), 2 \ell_0, \deg(g_i) +2 \ell_i\}-\deg(h_j)$. 
\end{lemma}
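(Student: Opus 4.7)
The plan is to lift the given modular representation to an equality in $\rng$ by embedding the matrices $\widetilde Q_i$ into the larger monomial space indexed by $\bm m_{\ell_i}$, correcting $\widetilde Q_0$ so that strict positive definiteness is preserved, and absorbing the resulting ideal-valued residual using the graded basis $\bm h$.

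First, since $\ell_i \ge \deg(B)$, every monomial in $B$ appears as an entry of $\bm m_{\ell_i}$; let $E_i$ be the inclusion matrix such that $B = \bm m_{\ell_i}\, E_i$ and set $\widehat Q_i := E_i\, \widetilde Q_i\, E_i^t$. Then $\widehat Q_i \succcurlyeq 0$ and, as polynomials, $\bm m_{\ell_i}\,\widehat Q_i\,\bm m_{\ell_i}^t = B\,\widetilde Q_i\, B^t$. For $1 \le i \le r$ I take $Q_i := \widehat Q_i$; but $\widehat Q_0$ has rank at most $D$ and is typically not strictly positive definite, so a further correction is needed for the constant-term matrix.

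Second, to recover strict positivity of $Q_0$, for each monomial $m \in \bm m_{\ell_0}\setminus B$ I compute its normal form $\cl N(m) = \sum_{b\in B} c_{m,b}\, b$ modulo the graded basis $\bm h$, and let $v_m \in \QQ^{|\bm m_{\ell_0}|}$ be the coordinate vector of $m - \cl N(m)$ in $\bm m_{\ell_0}$, so that $\bm m_{\ell_0}\, v_m = m - \cl N(m) \in I$. I then define
\[
\Delta := \sum_{m \in \bm m_{\ell_0}\setminus B} v_m v_m^t \qquad \text{and} \qquad Q_0 := \widehat Q_0 + \Delta.
\]
The key verification is $Q_0 \succ 0$: if $v^t Q_0\, v = 0$, the nonnegative summands $v^t \widehat Q_0\, v$ and $\sum_m (v^t v_m)^2$ must both vanish; the first, combined with $\widetilde Q_0 \succ 0$, forces $E_0^t v = 0$, i.e., $v$ is supported on $\bm m_{\ell_0}\setminus B$; once $v$ is so supported, $v^t v_m$ reduces to the $m$-th coordinate of $v$ (the $B$-contributions to $v_m$ pair with zero entries of $v$), and the second condition then forces $v = 0$.

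Third, by construction
\[
\bm m_{\ell_0} Q_0 \bm m_{\ell_0}^t + \sum_{i=1}^r (\bm m_{\ell_i} Q_i \bm m_{\ell_i}^t)\, g_i \ = \ B\, \widetilde Q_0\, B^t + \sum_{i=1}^r (B\, \widetilde Q_i\, B^t)\, g_i + \sum_{m \in \bm m_{\ell_0}\setminus B}(m - \cl N(m))^2,
\]
where every term in the final sum lies in $I$. Combined with \eqref{eq:SDP mod I}, this shows that the residual $r := f - \bm m_{\ell_0} Q_0 \bm m_{\ell_0}^t - \sum_{i=1}^r(\bm m_{\ell_i} Q_i \bm m_{\ell_i}^t)\, g_i$ lies in $I$ and has degree at most $\max\{\deg(f),\,2\ell_0,\,\deg(g_i) + 2\ell_i : 1 \le i \le r\}$. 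Because $\bm h$ is a graded basis, $r$ admits a decomposition $r = \sum_{j=1}^s p_j h_j$ with $\deg(p_j) \le \deg(r) - \deg(h_j)$, yielding the claimed degree bound.

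The main obstacle I expect is the strict positive-definiteness check for $Q_0$ in the second step: ensuring that the rank-$D$ positive semidefinite block contributed by $\widehat Q_0$ combines with the rank-one corrections $v_m v_m^t$ indexed by $\bm m_{\ell_0}\setminus B$ to produce a full-rank matrix. This ultimately hinges on the direct sum decomposition $\QQ^{|\bm m_{\ell_0}|} = \operatorname{image}(E_0) \oplus \operatorname{span}\{v_m : m \in \bm m_{\ell_0}\setminus B\}$ induced by the fact that $B$ is a basis of $\QQrng/I$, which the coordinate argument above makes explicit.
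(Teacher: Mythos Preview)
Your proof is correct and follows essentially the same idea as the paper's: both lift $\widetilde Q_0$ to a strictly positive definite $Q_0$ on the full monomial space by adding a sum of squares supported on $I_{\ell_0}$. The paper does this abstractly, choosing any basis $C$ of $I_{\ell_0}$ and taking $Q_0$ block diagonal equal to $\mathrm{diag}(\widetilde Q_0,\mathrm{Id})$ in the basis $B\cup C$; your construction is the concrete instance $C=\{m-\cl N(m):m\in \bm m_{\ell_0}\setminus B\}$, with the change of basis made explicit through the vectors $v_m$ and the kernel argument spelled out. A minor difference is that the paper also adds the identity block to every $Q_i$, $i\ge 1$, whereas you simply embed $\widetilde Q_i$ via $E_i$; since only $Q_i\succcurlyeq 0$ is required for $i\ge 1$, your version is a slight simplification there.
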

\begin{proof}
Since $B$ is a monomial basis of $\rng/I$ and $\bm h$ is a graded basis of $I$, we can decompose $\rng_\ell =B \oplus I_{\ell}$ with $I_{\ell}= I \cap \rng_{\ell} = \{\sum_{j=1}^{s} p_j\,h_j \mid \deg(p_j) \le \ell -\deg(h_j) , j=1, \ldots , s\}$.
Choosing a basis $C$ of $I_{\ell}$ so that $B\cup C$ is basis of $\rng_{\ell}$, we take $Q_i$ to be a block diagonal matrix in the basis $B \cup C$ with the upper block equal to $\widetilde Q_i$ and the identity for the lower block. Then we have $Q_0\succ 0$, $Q_i \succcurlyeq 0$ and $\bm m_{\ell_i} Q_{i} \bm m_{\ell_i}^t \equiv B \widetilde Q_{i} B ^t \mod I$. We deduce that 
$$
f - \left(  \bm m_{\ell_0}  Q_{0} \bm m_{\ell_0}^t + \sum_{i} ( \bm m_{\ell_i} Q_{i} \bm m_{\ell_i}^t)g_i \right) \in I_{\max\{\deg(f), 2 \ell_0, \deg(g_i) +2 \ell_i\}},
$$
which yields the decomposition \eqref{eq:decomp sdp}.
\end{proof}

To find such a decomposition, we solve 
a Semi-Definite Program that maximizes the smallest eigenvalue of $Q_0$ in \eqref{eq:decomp sdp}. \Cref{lem:positive_definite} implies that, when considering monomials of sufficiently high degree, this smallest eigenvalue will be strictly positive. Then we round, to a correct precision and taking into account the smallest eigenvalue of $Q^*_0$, the approximate optimal matrices $Q_i^*$ and polynomials $p_j^*$, to obtain an exact SoS representation for $f$.

\begin{algorithm}[H]
\label{algo:1}
\caption{Computing a rational decomposition of $f>0$ on $S$}
\textbf{Input:} $f, g_1, \ldots, g_r, h_1, \ldots, h_s \in \QQrng$ and $\ell_0, \ldots, \ell_r \in \NN$ such that $\ell_i\ge \delta$, where $\delta$ is the degree of a finite basis $B$ of $\QQrng/I$, and $(h_1, \ldots, h_s)$ is a graded basis of $I$.

\begin{enumerate}
\item Solve the Semi-Definite Program 
\begin{equation}\label{eq:algo sdp}
\begin{array}{rl}
\sup & \lambda\\
\textup{s.t.} & Q_{0}-\lambda\, \textup{I}\succcurlyeq 0\\
& f = \bm m_{\ell_0}  Q_{0} \bm m_{\ell_0}^t + \sum_{i=1}^{r}( \bm m_{\ell_i} Q_{i} \bm m_{\ell_i}^t) g_i+ \sum_{j=1}^{s} p_j\,h_j  \textup{ with }\\ 
& Q_{i}\succcurlyeq 0, p_j \in \rng_{ \max\{\deg(f), \deg(g_i) +2 \ell_i\}-\deg(h_k)}
\end{array}
\end{equation}
to get approximate optimal $Q^*_i, p^*_j$ and $\lambda^*>0$ lower bounding the optimal smallest eigenvalue of $Q^*_0$.
\item Repeat for 
$\kappa =\max\{ \lceil - \log(\lambda^*)\rceil, 0 \} \ldots $ 
\begin{itemize}
\item Compute a weighted Cholesky factorization of $Q_i^*$ and round it at precision $k$ to get $\widehat \omega_{i,k}\in \QQ_{\ge 0}$ and $\widehat q_{i,k}\in \vspan{B}_{\QQ}$ such that 
$$
B Q_i^* B^t \approx \sum_k \widehat \omega_{i,k} \widehat q_{i,k}^{\, 2},
$$
\item Round $p^*_j$ at precision $\kappa$ to get $\widehat p_j$,

\item Compute a weighted Cholesky decomposition of 
$$
\widehat f= f - \sum_{i} \big(\sum_{k} \widehat \omega_{i,k} \widehat q_{i,k}^{\, 2}\big)g_{i}  - \sum_{j} \widehat p_j\,h_j 
$$
as $\widehat f = \sum_{k} \widehat \omega_{0,k} (\widehat q_{0,k})^2$,
\end{itemize}
until $\widehat \omega_{0,k}>0$.
\end{enumerate}
\textbf{Output:} $f = \sum_{k} \widehat \omega_{0,k} \widehat q_{0,k}^{\,2} +  \sum_{i} \big(\sum_{k} \widehat \omega_{i,k} \widehat q_{i,k}^{\, 2}\big)g_{i}  + \sum_{j} \widehat p_j\,h_j$ with 
$\widehat \omega_{0,k} \in \QQ_{>0}$, $\widehat \omega_{i,k} \in \QQ_{\ge 0}$, \ $\widehat q_{i,k} \in \QQrng_{ \ell_i}$, 
$\widehat p_j \in \rng_{\max\{2 \ell, \deg(g_i) +2\ell_i\}-\deg(h_k)} $.
\end{algorithm}
\smallskip
The output of the algorithm is a certificate of positivity for $f$ on $S$. It may not be of smallest degree as in \Cref{thm:A}, and with small bitsize coefficients for the SoS representation, as in \Cref{thm:C}.  
To decrease the degrees in the SoS representation, one can simply reduce $\widehat q_{i,k}$ by the graded basis $\bm h$ to get $q_{i,k}\in \vspan{B}_{\QQ}$.
It should be noticed that the precision needed in the rounding in this algorithm is less than in \Cref{sec:6}, since we maximize the smallest eigenvalue of $Q_0$. In other words, the algorithm is self-adaptive, and we can expect good bitsize bounds on the computed SoS representation.

The complexity of Algorithm \ref{algo:1} is dominated by the complexity of solving the SDP program \eqref{eq:algo sdp} approximately. By \cite{Nesterov1994}, the number of arithmetic operations needed to find an approximate solution of 
\Cref{eq:algo sdp} is in \[\cl O\big(\sqrt{M}\, N^{2}(\sum_{i=0}^{r} {M_i}^{2} + 2\sum_{j=1}^{s} {M'_{j}}^{2})+\sqrt{M} N\, (\sum_{i=1}^{r} M_{i}^{3} + 2\,\sum_{j=1}^{s} {M'_{j}}^{3}) \big)\] where
\begin{itemize}
    \item $\widehat{d}=\max\{\deg(f), \deg(g_i), \deg(h_j)\}+ 2 \delta+1$ \item $M_i\le \widehat{d}^n$ (resp. $M'_j \le \widehat{d}^n$) (resp. $N\le \widehat{d}^n$) is the number of monomials of degree $<\widehat{d}-\deg(g_i)$ (resp. $<\widehat{d}-\deg(h_j)$ with $g_0=1$) (resp. $\widehat{d}$),
    \item $M = \sum_{i=0}^{r} M_0 + 2 \sum_{j=1}^{s} M'_{j}\le (r + 2s+ 1) \widehat{d}^n$.
\end{itemize}
The precision needed to perform this computation is in $\cl O(\kappa)$ where $\kappa =\max\{ \lceil - \log(\lambda^*)\rceil, 0 \}$. 
By \Cref{thm:f strict pos}, when $f>0$ on $S$, we have $\kappa = \widetilde{\cl O}(d^{n}) \delta \tau = \widetilde{\cl O}(\widehat{d}^{n+1})\tau$.
The weighted Cholesky decompositions involved in step 3 requires $\cl O(M_i^3)= \cl O(\widehat{d}^{3n})$ arithmetic operations. Thus the total bit complexity of Algorithm \ref{algo:1} when $I$ is radical and $f\ge 0$ on $S$ is in 
$$
\widetilde{\cl O}(\kappa) (r+2\,s+1) \widehat{d}^{4.5 n} = \widetilde{\cl O}(\widehat{d}^{5.5n+1}) (r+2\,s+1)  \tau.
$$

This algorithm is implemented in the \textsc{Julia} package \texttt{MomentPolynomialOpt.jl}\footnote{\url{https://github.com/AlgebraicGeometricModeling/MomentPolynomialOpt.jl}}. We present hereafter experimentation with this tool.

\begin{example} Set
$g_1:=y$, $h_1:=x^2-1$, $h_2:=
y^2-x-2$  $\in \QQ[x,y]$ and $S=S(g;h_1,h_2)\subset \RR^2$, the basic closed semialgebraic set defined by $g_1\ge 0$, $h_1=h_2=0$. 

We have $S=\{(1,\sqrt 3), (-1,1)\} $. Moreover,  $(h_1, h_2)$ is a graded basis of $I=(h_1,h_2)$ and $B=\{1, x, y, x\,y\}$ is a reduced basis of $\QQrng/I$, of degree $\delta=2$.

Let $f:= x+y+3\in \QQ[x,y]$, which is strictly positive on $S$. 
We apply Algorithm \ref{algo:1} in degree $4$ 
to compute a certificate of (strict) positivity of $f$ on $S$, using a rounding precision of $1$ (decimal) digit:
{\small
\begin{verbatim}
       WS, P, v, M = sos_decompose(f, G, H, X, 2; exact = true, round = 1)
\end{verbatim}
}%
\noindent{}with \texttt{G} $=[g_1]$, \texttt{H} $ = [ h_1, h_2]$, \texttt{X} $=[x,y]$ and a relaxation order $r=2$. We obtain the exact decomposition 
$$
f = q_0 + q_1 \, g_1 + p_1\,  h_1 + p_2\, h_2
$$
where {\small
\begin{align*}
q_0  = &\
\frac{1}{2} x^4 + \frac{3}{10} \, x^{2}y^2 - \frac{1}{10} \, xy^3 + \frac{2}{5} \, y^4 + \frac{1}{10} \, x^3 - \frac{3}{50} \, xy^2 - \frac{9}{125} \, y^3 \\ 
& + \frac{3}{10} \, x^2 + \frac{7}{25} \, y^2 - \frac{39}{500} \, y + \frac{7}{10},
\\
 = &\ 
\frac{1}{2} \, \left( x^2 + \frac{1}{5}\,{ y^2} + \frac{1}{10}\, {x} + \frac{1}{5} \right)^{2} 
+ \frac{1}{10}\, \left( xy - \frac{1}{2}\, {y^2} - \frac{3}{20}\, {y} \right)^{2} \\
& + \frac{71}{200}\, \left( y^2 - \frac{5}{71}\, {x} - \frac{87}{710}\, {y} + \frac{44}{213} \right)^{2} 
+ \frac{331}{3550}\, \left( x - \frac{87}{2648}\, {y} - \frac{103}{1986} \right)^{2} \\ 
& + \frac{6804227}{79440000}\, \left( y - \frac{2396940}{6804227} \right)^{2} + \frac{1001198282}{1530951075},\\
q_1  = &\ \frac{1}{5} \, {x^2} - \frac{6}{25 }\, { xy} + \frac{34}{125}\, {y^2} - \frac{17}{25}\, {y} + \frac{289}{500},
\\
= &\  \frac{1}{5} \,\left( x - \frac{3}{5}\, {y} \right)^{2} + \frac{1}{5}\, \left( y - \frac{17}{10} \right)^{2},\\
p_1  = &\ -\frac{1}{2}\, {x^2} - \frac{2}{5}\, {y^2} - \frac{1}{10}\, {y} - \frac{7}{10},
\\
p_2 = &\  \frac{1}{10}\, {x^2} + \frac{1}{10}\, {xy} - \frac{2}{5}\, {y^2} - \frac{1}{10}\, {x} - \frac{1}{5}\, {y} - \frac{4}{5}.
\end{align*}}
This example shows that SoS representations can exist in striclty smaller degree  than the upper bound $5$ of \Cref{thm:f strict pos}. 
\end{example}

\begin{example}\label{ex:singular} Set $h_1:=x^{3} - y^2$, $h_2:=x^{2} - 2x + y^{2}$ $\in \QQ[x,y]$ and 
$S = V_\RR(h_1, h_2)\subset \RR^2$.

We have $S=\{(0,0), (1,\pm 1), (2,\pm 2\sqrt 2)\}$ where 
$(0,0)$ has multiplicity $2$. Moreover, 
$(h_1, h_2)$ is a graded basis of $I=(h_1,h_2)$, and  $B=\{1,x,x^2, y, xy, xy^2\}$ is a reduced basis of $\QQ[x,y]/I$ of degree  $\delta=3$.

Let $f=x\in \QQ[x,y]$, which is nonnegative on $S$. We can verify that $(I:f)+(f)=(1)$ and that $a\, x + b =2$, for  $a= 1+x$ and $b=2 -x -x^2$, where $b\, x = -h_1 -h_2\in I$. 

Applying Algorithm 1 to $a=1+x$ in degree $4$, using a rounding precision of $1$ (decimal) digit, we get 
$$
a = q_0 + p_1 h_1+ p_2 h_2
$$
where
{\small
\begin{align*}
q_0 = &\left( 1 - \frac{3}{10} \, x - \frac{1}{15} \, y^{2} - \frac{1}{3} \, x^{2} \right)^{2} +
\frac{5}{6} \, \left( y - \frac{57}{100} \, x \, y \right)^{2} 
+ \frac{233}{300} \, \left( x - \frac{57}{466} \, y^{2} - \frac{60}{233} \, x^{2} \right)^{2} \\
&+\frac{16237}{33552} \, \left( y^{2} - \frac{7832}{81185} \, x^{2} \right)^{2} + \frac{2117}{4000} \, \left( x \, y \right)^{2} + \frac{432621}{811850} \, \left( x^{2} \right)^{2} \\
p_1= & -\frac{1}{10} - \frac{2}{5} \, x \\
p_2 = & - \frac{4}{5} - \frac{3}{10} \, x - \frac{1}{2} \, y^{2} - \frac{3}{10} \, x^{2} \\
\end{align*}
}
Since $2 x = a x^2+ b\,x= a\, x^2 -h_1-h_2$, we deduce {an SoS} representation of $f=x$:
$$
x = \frac{1}{2}  q_0 \, x^2 + \frac{1}{2} (p_1\,x^2 -1) h_1 + \frac{1}{2} (p_2 \, x^2-1) h_2.
$$
\end{example}

\subsection*{Acknowledgments}
The authors acknowledge the support of the Institut Henri Poincar\'e (UAR 839 CNRS-Sorbonne Universit\'e) and LabEx CARMIN (ANR-10-LABX-59-01).
%
They would like to thank Luis Felipe Vargas and Markus Schweighofer for bibliographical suggestions, Martin Sombra for discussions on the  arithmetic B\'ezout theorem, and Dávid Papp for discussions about rational SoS representations.

Lorenzo Baldi was partially funded by the Paris \^{I}le-de-France Region, under the grant agreement 2021-02--C21/1131, and by the Humboldt Research Fellowship for postdoctoral researchers.

{Finally, the authors want to thank the referee for the helpful comments and for suggesting the reference \cite{Hua2025}.}
\bibliographystyle{myalpha}
\bibliography{references}

\appendix
\section{Appendix: Linear algebra bounds}
In this appendix, we recall height bounds for the solutions of basic linear algebra operations on matrices with integer coefficients.

\subsection{Solving linear systems}

An important operation in our approach is to compute a solution of a linear system of equations.  We can bound the size of a rational solution of a linear system with integer coefficients as follows:

\begin{lemma}\label{rem:HadamardD}
Let $\bm A\in \ZZ^{M \times N}$ be such that $M\le N$ and $\bm b\in \ZZ^M$ with $\h(\bm A)\le \tau_A$ and $\h(\bm b)\le \tau_b$. If the linear system $\bm A\, \bm y=\bm b$ admits a solution, then one of these solutions can be written by Cramer's rule as $\bm c=  \widehat {\bm c}/\nu$ with $\widehat {\bm c}\in \ZZ^N$ and $\nu\in \NN$ satisfying by Hadamard's inequality 
$$
\h(\nu)\le  \frac{M}{2}\log(M) + M\tau_A  \quad \mbox{and} 
\quad \h(\widehat{\bm c})\le  \frac{M}{2}\log(M)+(M-1)\tau_A +\tau_b.
$$
\end{lemma}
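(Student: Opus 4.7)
The plan is a direct application of Cramer's rule combined with Hadamard's inequality, once we have extracted an invertible square submatrix from $\bm A$. The only subtle point is that $\bm A$ need not have full row rank $M$, so a preliminary rank reduction step is needed.

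First, since the system $\bm A \bm y = \bm b$ is assumed to be solvable, the ranks of $\bm A$ and of the augmented matrix $[\bm A \mid \bm b]$ coincide; call this common rank $r \le M$. I would then select $r$ linearly independent rows of $\bm A$ and restrict $\bm b$ accordingly, obtaining an equivalent system of rank $r$ with $r$ equations. Among the $N$ columns of the reduced matrix, choose $r$ linearly independent ones, forming an invertible $r \times r$ submatrix $\bm A' \in \ZZ^{r\times r}$, and set the remaining coordinates of $\bm y$ to $0$. The truncated unknown $\bm y' \in \QQ^r$ then satisfies $\bm A' \bm y' = \bm b'$, where $\bm b' \in \ZZ^r$ is the restriction of $\bm b$; note that $\h(\bm A') \le \tau_A$ and $\h(\bm b') \le \tau_b$ are preserved.

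Next, Cramer's rule gives $y'_i = \det(\bm A'_i)/\det(\bm A')$, where $\bm A'_i$ is obtained from $\bm A'$ by replacing its $i$-th column by $\bm b'$. I would then set $\nu := |\det(\bm A')| \in \NN$ and take $\widehat{\bm c}_i := \pm \det(\bm A'_i) \in \ZZ$ for the $r$ selected columns (sign chosen so that $\bm c = \widehat{\bm c}/\nu$) and $\widehat{\bm c}_j := 0$ for the remaining $N-r$ coordinates. This yields the desired representation $\bm c = \widehat{\bm c}/\nu$ of a solution.

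It remains to bound the heights. Hadamard's inequality states that for $\bm M \in \RR^{r \times r}$, $|\det(\bm M)| \le \prod_{k=1}^r \|\text{col}_k(\bm M)\|_2$. For $\bm A'$, each column has entries of absolute value at most $2^{\tau_A}$, hence $\ell^2$-norm at most $\sqrt r \cdot 2^{\tau_A}$; thus $\nu \le r^{r/2}\, 2^{r \tau_A}$, giving
$$\h(\nu) \le \tfrac{r}{2}\log r + r\tau_A \le \tfrac{M}{2}\log M + M \tau_A.$$
For $\bm A'_i$, exactly one column (the $i$-th) has entries bounded by $2^{\tau_b}$ while the remaining $r-1$ columns are bounded by $2^{\tau_A}$; Hadamard's inequality therefore gives $|\det(\bm A'_i)| \le r^{r/2}\, 2^{(r-1)\tau_A + \tau_b}$, so
$$\h(\widehat{\bm c}_i) \le \tfrac{r}{2}\log r + (r-1)\tau_A + \tau_b \le \tfrac{M}{2}\log M + (M-1)\tau_A + \tau_b,$$
and this bound trivially holds for the zero coordinates as well.

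There is no real obstacle here; the statement is essentially an accounting of Hadamard's bound. The only point requiring minor care is the monotonicity step $\tfrac{r}{2}\log r + r\tau_A \le \tfrac{M}{2}\log M + M\tau_A$ when $r < M$, which is immediate since $x \mapsto \tfrac{x}{2}\log x + x \tau_A$ is increasing on $[1,\infty)$.
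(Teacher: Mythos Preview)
Your argument is correct and follows exactly the approach the paper indicates: the paper does not actually supply a separate proof for this lemma, since the statement itself already names the two ingredients (``by Cramer's rule'' and ``by Hadamard's inequality''), and your write-up simply fills in those standard details. The preliminary rank-reduction step you include is a reasonable precaution, and the height bookkeeping via Hadamard's column bound matches what is intended.
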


\subsection{Projection on a linear space}

\begin{lemma}\label{lem:projection} 
    Let $M\le N$,   $\bm A\in \ZZ^{M\times N}$ of rank $M$ and $\bm b\in \ZZ^{M}$, with $1\le \h(\bm A)\le \tau_A, 1\le \h(\bm b)\le \tau_b$. Let  $\cl L$ be the linear variety
    $$\cl L=\{ \bm y\in \RR^N\,: \, \bm A \bm y=\bm b\}.$$ 
    Let $\bm c_0=\widehat{\bm c}_0/\nu_0\in \QQ^N$ with $\widehat{\bm c}_0\in \ZZ^N$, $\nu_0\in \NN$, $\h(\widehat{\bm c}_0),\h(\nu_0)\le \tau_0$. Then, the orthogonal projection $\bm c=\widehat{\bm c}/\nu\in \QQ^N$ with $\widehat{\bm c}\in \ZZ^N$, $\nu\in \NN$,  of $\bm c_0$ on $\cl L$ satisfies
    $$
     \h(\widehat{\bm c}),\h(\nu)\le (2M+1)(\log(N)+ \tau_A)+ \tau_b + \tau_0.
    $$
\end{lemma}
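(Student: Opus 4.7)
The plan is to write down the orthogonal projection in closed form and clear denominators, reducing the problem to bounding the determinant and adjugate of $\bm A \bm A^t$.

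Concretely, since $\bm A$ has rank $M$, the matrix $\bm A \bm A^t \in \ZZ^{M\times M}$ is invertible, and the orthogonal projection of $\bm c_0$ on $\cl L$ is
\[
\bm c \;=\; \bm c_0 \;-\; \bm A^t (\bm A \bm A^t)^{-1} (\bm A \bm c_0 - \bm b).
\]
Writing $\Delta = \det(\bm A\bm A^t)\in \ZZ_{>0}$ and $E = \mathrm{adj}(\bm A \bm A^t) \in \ZZ^{M\times M}$, and using $\bm c_0 = \widehat{\bm c}_0/\nu_0$, we obtain
\[
\nu_0 \Delta \,\bm c \;=\; \Delta \,\widehat{\bm c}_0 \;-\; \bm A^t E\,(\bm A\,\widehat{\bm c}_0 - \nu_0\,\bm b),
\]
so we can take $\nu := \nu_0 \Delta$ and $\widehat{\bm c} := \Delta \,\widehat{\bm c}_0 - \bm A^t E\,(\bm A\,\widehat{\bm c}_0 - \nu_0\,\bm b)$, both integer.

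Now I would perform the routine height estimates. First, each entry of $\bm A \bm A^t$ is a sum of $N$ products of integers of height $\le \tau_A$, so $\h(\bm A\bm A^t) \le \log N + 2\tau_A$. Hadamard's inequality applied to $\bm A\bm A^t$ and to its $(M-1)\times(M-1)$ minors then gives
\[
\h(\Delta) \le \tfrac{M}{2}\log M + M(\log N + 2\tau_A), \qquad \h(E) \le \tfrac{M-1}{2}\log (M-1) + (M-1)(\log N + 2\tau_A),
\]
which already yields $\h(\nu) = \h(\nu_0) + \h(\Delta) \le \tau_0 + (2M+1)(\log N + \tau_A)$ after absorbing the $\log M$ terms using $M \le N$.

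For $\widehat{\bm c}$, the first summand $\Delta\,\widehat{\bm c}_0$ has height $\le \h(\Delta) + \tau_0$, within the claimed bound. For the second summand, the vector $\bm A\,\widehat{\bm c}_0 - \nu_0\,\bm b$ has height $\le \log(N+1) + \max(\tau_A, \tau_b) + \tau_0$; multiplying by $E$ (an $M\times M$ matrix) and then by $\bm A^t$ (an $N\times M$ matrix) adds two factors of $\log M$ together with $\h(E) + \tau_A$. Adding everything up and using $M \le N$ gives a final bound of the form $(2M+1)(\log N + \tau_A) + \tau_b + \tau_0$, as required.

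There is no genuine obstacle here; the only mild care needed is to keep track of the $\log$ terms so that they can be absorbed into $(2M+1)\log N$, which uses $M\le N$ and the standing assumptions $\tau_A, \tau_b \ge 1$.
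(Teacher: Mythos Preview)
Your proposal is correct and follows essentially the same approach as the paper: both write the projection as $\bm c = \bm c_0 - \bm A^t(\bm A\bm A^t)^{-1}(\bm A\bm c_0-\bm b)$, clear denominators via $\det(\bm A\bm A^t)$ (the paper phrases this as an application of its Cramer's-rule lemma, you use the adjugate explicitly), and then bound heights with Hadamard's inequality together with $M\le N$. The arithmetic matches.
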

\begin{proof}
Since the columns of $\bm A^t$ span $(\ker \bm A)^{\perp}$, the orthogonal projection $\bm c$ of $\bm c_0$ on $\cl L$ is $\bm c= \bm c_0-\bm A^t \bm z$ for $\bm z\in \QQ^{M}$ such that
$$
\bm A \bm A^t \bm z = \bm A \bm c_0 - \bm b=\frac{\bm A \widehat{\bm c}_0 -\nu_0 \bm b}{\nu_0}.
$$
Let $\bm z= {\widetilde{\bm z}}/ \nu_0$ with $\widetilde{\bm z}\in \QQ^M$, so that 
$$
\bm A \bm A^t \widetilde{\bm z} = \bm A \widehat{\bm c}_0 - \nu_0\bm b.
$$
As   $\h(\bm A \bm A^T)\le   \log(N)+ 2 \tau_A$ and
$\h(\bm A \widehat{\bm c}_0 -\nu_0\bm b)\le  \log(N)+  \tau_A+ \tau_b+ \tau_0$, we deduce
by  \Cref{rem:HadamardD} that $\widetilde{\bm z}= {\widehat{\bm z}/{\nu_1} } $ with $\nu_1 \in \NN, \widehat {\bm z} \in \ZZ^{N}$ such that
$$
\h(\nu_1)\le  2M(\log(N)+ \tau_A ) \quad \mbox{and} \quad 
\quad
\h(\widehat{\bm z})\le 2M\log(N)+ (2M-1)\tau_A+ \tau_b + \tau_0.
$$
Thus, $\bm c = \bm c_0-\bm A^t \bm z = \dfrac{  \nu_1  \widehat{\bm  c}_0  - \bm A^t \widehat{\bm z}}{\nu_0\nu_1}= \dfrac{\widehat{\bm c}}{\nu}$ satisfies
\[
\h(\nu),\h(\widehat{\bm c})\le (2M+1)\log(N)+2M\tau_A+ \tau_b+ \tau_0.\qquad \qedhere
\]
\end{proof}

\subsection{Weighted square-root-free Cholesky decomposition}
Another important ingredient is the decomposition of a definite positive quadratic form as a weighted sum of squares, also known as a weighted Cholevsky factorisation.
The decomposition proceeds by induction, removing one variable at a time, using a completion-of-square technique.

Let $Q(X_1, \ldots, X_M)= \sum_{1 \le i,j\le M} q_{i,j} X_i\, X_j$ be a form with $q_{i,j}= q_{j,i}\in \RR$.
We denote by $\Delta^{k}_{i,j}$  the $(k+1)\times (k+1)$ minor formed by the rows $1, \ldots, k, i$ and columns $1, \ldots, k, j$ of $\bm Q = (q_{i,j})$. 

Notice that $\Delta^0_{i,j}= q_i,j$ and $\Delta^{k}_{i,j}=0$ if $i\le k$ or $j\le k$. 
Let $\Delta_k= \Delta^{k-1}_{k,k}$ be the $k^{\mathrm th}$ first principal minor.
We assume hereafter that $\Delta_{k}\ne 0$ for $k=1, \ldots, M$.

Completing the square with respect to the variable $X_1$, we obtain 
$$
Q(X_1,\ldots, X_M) = \frac 1 {q_{1,1}} (\sum_{i} q_{i,1} X_i)^2 + Q'(X_2,\ldots, X_n),
$$
where $Q' = (q'_{i,j}) $ with
$$
q'_{i,j} = q_{i,j} - \frac 1 {q_{1,1}}  q_{i,1} q_{1,j} 
= \frac 1 {q_{1,1}} ( q_{1,1} q_{i,j} - q_{i,1} \, q_{1,j}).
$$
Then $q'_{i,j} = \frac 1 {\Delta_{1}} \Delta^1_{i,j}$.
By the same computation applied to $Q'$, we get
\begin{align}
\label{eq:sos 2}
Q(X_1,\ldots, X_M) 
 &= \frac 1 {\Delta_{1}} (\sum_{i} \Delta^0_{i,1} X_i)^2 
+ \frac 1 {q'_{2,2}} (\sum_{i=2}^M q'_{i,2} X_i)^2 
+ Q''(X_3, \ldots, X_M)\nonumber\\
& = \frac 1 {\Delta_{1}} (\sum_{i} \Delta^0_{i,1} X_i)^2 
+ \frac 1 {\Delta_{2} \Delta_1} (\sum_{i} \Delta^1_{i,2} X_i)^2 
+ Q''(X_3, \ldots, X_M)
\end{align}
where $Q'' = (q''_{i,j}) $ with
$$
q''_{i,j} = \frac 1 {q'_{2,2}} ( q'_{2,2} q'_{i,j} - q'_{i,1} \, q'_{1,j})
= \frac {1} {\Delta_{2} \Delta_{1}}( \Delta^{1}_{2,2} \Delta^{1}_{i,j} - \Delta^{1}_{i,1} \, \Delta^{1}_{1,j}).
$$
Using Sylvester identity (see e.g. \cite{Bareiss1968}), we have $ \Delta^{1}_{2,2} \Delta^{1}_{i,j} - \Delta^{1}_{i,1} \, \Delta^{1}_{1,j} = \Delta_{1}\, \Delta^2_{i,j}$ and thus
$q''_{i,j} = \frac {1} {\Delta_{2}} \Delta^2_{i,j}$.

Repeating this computation, we obtain an explicit decomposition of $Q$ in terms of squares:
\begin{proposition}\label{prop:wsos}
Let $Q(X_1, \ldots, X_M)= \sum_{1 \le i,j\le M} q_{i,j} X_i\, X_j$ be a quadratic form such that $\Delta_{k}\ne 0$ for $k=1,\ldots, M$ and let $\Delta_0=1$. Then 
$$
Q = \sum_{k=1}^{M} \frac 1 {\Delta_{k} \Delta_{k-1}} (\sum_{i} \Delta^{k-1}_{i,k} X_i)^2.
$$
\end{proposition}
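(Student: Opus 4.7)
The plan is to prove the identity by induction on the number of completions of squares performed, keeping track of the coefficients of the residual quadratic form at each step. The excerpt already carries out this argument for $k=1$ and $k=2$ using Sylvester's identity to simplify the updated entries; I will abstract this into a clean induction.

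More precisely, let $Q^{(k)}(X_{k+1},\dots,X_M) = \sum_{k<i,j\le M} q^{(k)}_{i,j} X_i X_j$ denote the quadratic form in the remaining variables that is left after performing $k$ successive completions of squares, with the convention $Q^{(0)} = Q$ and $q^{(0)}_{i,j} = q_{i,j}$. I would prove by induction on $k \in \{0, 1, \dots, M\}$ the claim
$$ q^{(k)}_{i,j} = \frac{\Delta^{k}_{i,j}}{\Delta_k} \quad \text{for all } i,j > k, $$
with the convention $\Delta_0 = 1$. The base case $k=0$ is immediate from $\Delta^0_{i,j} = q_{i,j}$. For the inductive step, assuming the formula at stage $k$, the $(k+1,k+1)$-entry of $Q^{(k)}$ is $q^{(k)}_{k+1,k+1} = \Delta^{k}_{k+1,k+1}/\Delta_k = \Delta_{k+1}/\Delta_k$, which is nonzero by the hypothesis on the leading minors. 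Completing the square with respect to $X_{k+1}$ in $Q^{(k)}$ then contributes the term
$$ \frac{1}{q^{(k)}_{k+1,k+1}} \Bigl(\sum_{i>k} q^{(k)}_{i,k+1} X_i\Bigr)^{\!2} = \frac{\Delta_k}{\Delta_{k+1}} \cdot \frac{1}{\Delta_k^{2}} \Bigl(\sum_{i} \Delta^{k}_{i,k+1} X_i\Bigr)^{\!2} = \frac{1}{\Delta_{k+1}\,\Delta_k} \Bigl(\sum_{i} \Delta^{k}_{i,k+1} X_i\Bigr)^{\!2}, $$
where the sum can be extended to all $i$ since $\Delta^{k}_{i,k+1} = 0$ for $i \le k$. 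This is exactly the $(k{+}1)$-th summand of the claimed decomposition.

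The remaining quadratic form $Q^{(k+1)}$ has coefficients
$$ q^{(k+1)}_{i,j} = q^{(k)}_{i,j} - \frac{q^{(k)}_{i,k+1}\, q^{(k)}_{k+1,j}}{q^{(k)}_{k+1,k+1}} = \frac{\Delta^{k}_{k+1,k+1}\,\Delta^{k}_{i,j} - \Delta^{k}_{i,k+1}\,\Delta^{k}_{k+1,j}}{\Delta_k\,\Delta^{k}_{k+1,k+1}} $$
for $i,j > k+1$. The crucial step, which is also the one requiring care, is Sylvester's determinant identity in the form
$$ \Delta^{k}_{k+1,k+1}\,\Delta^{k}_{i,j} - \Delta^{k}_{i,k+1}\,\Delta^{k}_{k+1,j} = \Delta_k \, \Delta^{k+1}_{i,j}, $$
which expresses a $2\times 2$ minor of the matrix of $(k{+}1)$-minors of $\bm Q$ as a product of a $k$-minor and a $(k{+}2)$-minor (cf.\ \cite{Bareiss1968}). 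Substituting this identity and cancelling the factor $\Delta_k\,\Delta^{k}_{k+1,k+1} = \Delta_k\,\Delta_{k+1}\cdot \Delta_k/\Delta_k$ yields $q^{(k+1)}_{i,j} = \Delta^{k+1}_{i,j}/\Delta_{k+1}$, closing the induction.

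The main (and only non-routine) obstacle is the correct statement and invocation of Sylvester's identity in the generality needed here, together with verifying that the index conventions $\Delta^{k}_{i,j} = 0$ for $i \le k$ or $j \le k$ are consistent throughout the iteration so that extending the summation index in each square term is legitimate. Once these determinantal bookkeeping points are settled, summing the $M$ contributions produced at each stage gives exactly the stated decomposition, and the hypothesis $\Delta_k \ne 0$ for all $k$ ensures that every denominator appearing along the way is nonzero.
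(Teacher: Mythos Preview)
Your proposal is correct and follows essentially the same approach as the paper: both argue by induction on the number of completed squares, maintain the invariant $q^{(k)}_{i,j}=\Delta^{k}_{i,j}/\Delta_k$ for the residual form, and invoke Sylvester's identity to pass from stage $k$ to stage $k+1$. Your write-up is somewhat more explicit about the base case and the vanishing $\Delta^{k}_{i,k+1}=0$ for $i\le k$, but the structure of the argument is the same.
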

\begin{proof}
We prove by induction that 
\begin{equation}\label{eq:sos k}
Q(X_1,\ldots, X_M) 
= \frac 1 {\Delta_{1}} (\sum_{i} \Delta^0_{i,1} X_i)^2 
+ \,s  
+ \frac 1 {\Delta_{k-1} \Delta_{k-2}} (\sum_{i} \Delta^{k-2}_{i,k-1} X_i)^2 
+ Q^{(k-1)}(X_{k}, \ldots, X_M)
\end{equation}
where $Q^{(k-1)}(X_{k}, \ldots, X_{M}) = (q_{i,j}^{(k-1)})$ with 
$q_{i,j}^{(k-1)} = \frac 1 {\Delta_{k-1}} \Delta^{(k-1)}_{i,j}$. By the same construction as above applied to $Q^{(k-1)}$ we obtain a decomposition at order $k$ from the decomposition \eqref{eq:sos k} or order $k-1$, 
with $Q^{(k)}= (q^{(k)}_{i,j})$ such that
$$
q^{(k)}_{i,j} = \frac 1 {q^{(k-1)}_{k,k}} ( q^{(k-1)}_{k,k} q^{(k-1)}_{i,j} - q^{(k-1)}_{i,k} \, q^{(k-1)}_{k,j})
= \frac {1} {\Delta_{k}} \Delta^{k}_{i,j}.
$$
Here we use again Sylvester identity: $\Delta_{k} \Delta^{k-1}_{i,j} - \Delta^{k-1}_{i,k} \, \Delta^{k-1}_{k,j} =  \Delta_{k-1}\, \Delta^{k}_{i,j}$. This proves the induction and the proposition since we have \eqref{eq:sos 2}.
\end{proof}
\begin{remark}
This shows in particular that if $Q$ is a quadratic form such as $\Delta_k \ne 0$, then its signature is $(M-s,s)$ where $s$ is the number of sign changes of the sequence $[\Delta_0=1, \Delta_1, \ldots, \Delta_M]$. This is a well-known result, probably due to Sylvester.  
\end{remark}
We deduce the following proposition:
\begin{proposition}\label{prop:cholesky} Let $K$ be a number field and $Q\in S^M(K)$ such that $\Delta_k \neq 0$ for $k=1, \ldots, M$ and $\Delta_0=1$. Then 
$$
\bm Q = \bm L\, \bm D\, \bm L^t
$$
where 
\begin{itemize}
    \item $\bm L=(\bm L_{i,j})_{1 \le i,j\le M}$ is lower triangular with $\bm L_{i,j} =  \Delta_{i,j}^{j-1}$.
    \item $\bm D$ is diagonal with $\bm D_{i,i}= \frac 1 {\Delta_{i}\Delta_{i-1}}$.
\end{itemize}
\end{proposition}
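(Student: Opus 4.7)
The plan is to derive this matrix identity as a direct translation of the scalar sum-of-squares identity already established in Proposition \ref{prop:wsos}. That proposition is proved by induction using only completion of squares and Sylvester's determinant identity, so it is valid over any field in which the required pivots $\Delta_k$ are nonzero; in particular it holds over the number field $K$ under our hypothesis.

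First, I would recall the conclusion of Proposition \ref{prop:wsos} applied to the quadratic form associated to $\bm Q$:
$$
X^t\, \bm Q\, X \; = \; \sum_{k=1}^{M} \frac{1}{\Delta_{k}\Delta_{k-1}} \Bigl(\sum_{i=1}^{M} \Delta_{i,k}^{k-1} X_{i}\Bigr)^{2}.
$$
Next I would set $\bm L_{i,j} := \Delta_{i,j}^{j-1}$ and $\bm D_{i,i} := (\Delta_{i}\Delta_{i-1})^{-1}$, noting that both matrices have entries in $K$ since all $\Delta_k$ are assumed nonzero. Expanding the square and swapping summations gives
$$
X^t\, \bm Q\, X \; = \; \sum_{i,j=1}^{M} \Bigl(\sum_{k=1}^{M} \bm L_{i,k}\, \bm D_{k,k}\, \bm L_{j,k}\Bigr) X_{i} X_{j} \; = \; X^t\, (\bm L\, \bm D\, \bm L^t)\, X.
$$
Since both $\bm Q$ and $\bm L\, \bm D\, \bm L^t$ are symmetric, equating the bilinear forms yields the matrix identity $\bm Q = \bm L\, \bm D\, \bm L^t$.

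Finally, I would check that $\bm L$ is lower triangular by using the observation already recorded before Proposition \ref{prop:wsos}: $\Delta_{i,j}^{k} = 0$ whenever $i \le k$ or $j \le k$. Applying this with $k = j-1$ shows $\bm L_{i,j} = \Delta_{i,j}^{j-1} = 0$ as soon as $i < j$, while $\bm L_{j,j} = \Delta_{j,j}^{j-1} = \Delta_{j}$. This also gives, as a side remark, the well-known fact that the diagonal entries of $\bm L$ are the leading principal minors, and that $\det(\bm Q) = \det(\bm L)^2 \det(\bm D) = \Delta_M$, which is a useful consistency check. There is no real obstacle here: the whole argument is a bookkeeping translation of Proposition \ref{prop:wsos} into matrix form, and its value lies in packaging the decomposition in a shape directly usable for the height and positivity arguments invoked in Sections \ref{sec:6} and \ref{sec:7}.
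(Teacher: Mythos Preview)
Your proof is correct and follows exactly the route the paper intends: Proposition~\ref{prop:cholesky} is presented in the text simply as ``We deduce the following proposition'' from Proposition~\ref{prop:wsos}, and your argument is precisely that deduction—rewriting the scalar sum-of-squares identity as $X^t \bm Q X = X^t(\bm L\,\bm D\,\bm L^t)X$ and using symmetry (valid in characteristic zero) to conclude. The lower-triangularity check via $\Delta^{k}_{i,j}=0$ for $i\le k$ is also the intended one.
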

\begin{proposition}\label{prop:cholesky bound}
Let $\bm Q\in S^M(\ZZ)$ be  a positive definite matrix  with $\h(\bm Q)\le \tau_Q$. Then 
$$
\bm Q = \bm L\, \bm D \, \bm L^t
$$
where $\bm D= {\diag}(\frac 1 {\nu_1}, \ldots, \frac 1 {\nu_M})$, $\nu_i \in \NN$, $\bm L\in S^M(\ZZ)$ is lower triangular with 
$$
\h(\nu_i), \h(\bm L) \le 2 M \, (\log(M) + \tau_Q)
$$
\end{proposition}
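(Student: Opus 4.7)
The plan is to derive this directly from \Cref{prop:cholesky} combined with Hadamard's inequality applied to the integer minors of $\bm Q$. First, since $\bm Q \in S^M(\ZZ)$ is positive definite, Sylvester's criterion guarantees that every leading principal minor satisfies $\Delta_k > 0$ for $1 \le k \le M$ (and $\Delta_0 = 1$ by convention). Thus the hypothesis $\Delta_k \neq 0$ of \Cref{prop:cholesky} holds, and we obtain the explicit factorization $\bm Q = \bm L \bm D \bm L^t$ with $\bm L_{i,j} = \Delta_{i,j}^{j-1}$ and $\bm D_{i,i} = 1/(\Delta_i \Delta_{i-1})$. Because every $\Delta_{i,j}^{j-1}$ is the determinant of a $j \times j$ submatrix of the \emph{integer} matrix $\bm Q$, the entries of $\bm L$ are automatically integers, and each $\nu_i := \Delta_i \Delta_{i-1}$ is a positive integer (with $\nu_1 = \Delta_1$).

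Next, I would bound the sizes using Hadamard's inequality. For any $k \times k$ submatrix $\bm M_0$ of $\bm Q$, every row of $\bm M_0$ has $k$ entries of absolute value at most $2^{\tau_Q}$, so $\|\text{row}\|_2 \le \sqrt{k}\, 2^{\tau_Q}$, giving $|\det(\bm M_0)| \le k^{k/2} 2^{k \tau_Q}$, hence
\[
\h(\det(\bm M_0)) \le \tfrac{k}{2} \log(k) + k\, \tau_Q \le \tfrac{M}{2} \log(M) + M\, \tau_Q
\]
for every $1 \le k \le M$. Applied to $\bm L_{i,j} = \Delta_{i,j}^{j-1}$ this immediately gives $\h(\bm L) \le \tfrac{M}{2}\log(M) + M\, \tau_Q \le 2M(\log(M) + \tau_Q)$.

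Finally, for $\nu_i = \Delta_i \Delta_{i-1}$, taking $\h(\Delta_0) = 0$ and using additivity of $\h$ on products, we get
\[
\h(\nu_i) \le \h(\Delta_i) + \h(\Delta_{i-1}) \le M \log(M) + 2M\, \tau_Q \le 2M(\log(M) + \tau_Q),
\]
which is the claimed bound. There is no genuine obstacle here: the statement is a packaging of \Cref{prop:cholesky} (which already supplies the closed-form entries) together with the standard Hadamard estimate. The only point that requires a brief justification is the integrality and positivity of the $\nu_i$, both of which follow from the positive definiteness of $\bm Q$.
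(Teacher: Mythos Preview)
Your proof is correct and follows essentially the same approach as the paper: invoke \Cref{prop:cholesky} (after noting that positive definiteness gives $\Delta_k>0$), observe that the entries $\bm L_{i,j}=\Delta^{j-1}_{i,j}$ and $\nu_i=\Delta_i\Delta_{i-1}$ are integers, and bound the minors by Hadamard's inequality. Your Hadamard estimate $\tfrac{k}{2}\log(k)+k\,\tau_Q$ is in fact slightly sharper than the paper's $k(\log(k)+\tau_Q)$, but both fit comfortably under $2M(\log(M)+\tau_Q)$.
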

\begin{proof}
If $\bm Q\succ 0$, then its principal minors $\Delta_{j}>0$ are strictly positive.
By \Cref{prop:cholesky}, $\bm Q = \bm L\, \bm D \bm L^t$ with 
$L_{i,j} = \Delta^{j-1}_{i,j}\in \ZZ$ and $L_{i,j}=0$ if $i<j$, and $\nu_{j}= {\Delta_{j}\Delta_{j-1}} \in \QQ_{+}$. 
By Hadamard identity on $j\times j$ minors, we have $\h(L_{i,j}) \le j\, (\log(j)+ \tau_Q)$ and $\h(\omega_j)\le 2 j\,( \log(j) + \tau_Q)$, which proves the proposition.
\end{proof}

\end{document}